\theoremstyle{plain} 
\newtheorem{theorem}{\indent\sc Theorem}[section]
\newtheorem{lemma}[theorem]{\indent\sc Lemma}
\newtheorem{corollary}[theorem]{\indent\sc Corollary}
\newtheorem{proposition}[theorem]{\indent\sc Proposition}
\theoremstyle{definition} 
\newtheorem{remark}[theorem]{\indent\sc Remark}
\newtheorem{example}[theorem]{\indent\sc Example}
\newtheorem{note}[theorem]{\indent\sc Note}
\newcommand{\LieSO}{{\mathrm{SO}}}
\newcommand{\LieSp}{{\mathrm{Sp}}}
\newcommand{\LieU}{{\mathrm{U}}}
\newcommand{\lieA}{{\mathfrak{a}}}
\newcommand{\lieG}{{\mathfrak g}}
\newcommand{\lieK}{{\mathfrak k}}
\newcommand{\lieS}{{\mathfrak s}}
\newcommand{\lieP}{{\mathfrak p}}
\newcommand{\lieT}{{\mathfrak t}}
\newcommand{\Ad}{{\mathrm{Ad}}}
\newcommand{\ad}{{\mathrm{ad}}}
\newcommand{\Exp}{{\mathrm{Exp}}}
\newcommand{\Hom}{{\mathrm{Hom}}}
\newcommand{\Root}{{\mathcal{R}}}
\DeclareMathOperator{\aff}{aff}
\def\R{{\mathbb R}}
\def\Sphere{{S}}
\def\Z{{\mathbb Z}}
\def\C{{\mathbb C}}
\title{Symmetric Spaces with Rectangular Unit Lattices, Revisited}
\author[Eschenburg]{Jost-Hinrich Eschenburg}
\email{jost-hinrich.eschenburg@math.uni-augsburg.de}
\author[Heintze]{Ernst Heintze}
\email{ernst.heintze@math.uni-augsburg.de}
\author[Quast]{Peter Quast}
\email{peter.quast@math.uni-augsburg.de}
\address{Institut f\"ur Mathematik, Universit\"at Augsburg, 86135 Augsburg, Germany}
\date{\today}
\subjclass[2020]{53C35, 53C40}
\keywords{compact symmetric spaces, unit lattice, extrinsically symmetric spaces, root data, Theorem of Cartan--Helgason}
\begin{document}

\begin{abstract}
We give a new proof of a theorem of \textsc{Loos} stating that a Riemannian symmetric space $X$ with rectangular unit lattice  is a symmetric $R$-space. For this we  construct explicitly an isometric extrinsically symmetric embedding of $X$ in a Euclidean space which reveals $X$ as a standardly embedded symmetric $R$-space. We further determine the root systems, Euclidean root data, fundamental groups
 and eigenvalues of the Laplacian of symmetric spaces with rectangular unit lattice in a direct way.
\end{abstract}

\maketitle
\section{Introduction}

This work grew out from an attempt to understand a fundamental theorem of \textsc{Loos} \cite[Satz 6]{LoosCubic}
that establishes a one-to-one correspondence between symmetric $R$-spaces and compact Riemannian symmetric spaces with cubic unit lattice. Recall that a \emph{symmetric $R$-space} arises as an orbit $L(\xi)$ of a point $\xi$ in the boundary at infinity of a Riemannian symmetric space $L/G$ of non-compact type ($L$ semi-simple, non-compact and with trivial center and $G$ maximal compact in $L$) such that $(G,G_\xi)$
is a symmetric pair, where $G_{\xi}$ is the stabilizer of $\xi$ in $G.$ Since $G(\xi)=L(\xi),$ the symmetric $R$-space can be identified with $G/G_\xi$ and is thus a compact affine symmetric space. The \emph{unit lattice} of a compact Riemannian symmetric space $X$ is the lattice of a maximal torus $T$ of $X$
that is the set of all tangent vectors of $T$ at a point $p\in T$ which are mapped to $p$ by the Riemannian exponential map at $p.$
It is called
\emph{rectangular (cubic)} if it has an orthogonal basis (orthonormal basis after scaling). Then the main result of \textsc{Loos} \cite[Satz 5,6]{LoosCubic} (announced in \cite{LoosAnnounce}) says more precisely that a Riemannian symmetric space with cubic unit lattice is affinely equivalent to a symmetric $R$-space and that moreover a symmetric $R$-space $G/G_{\xi}$ has (up to a scaling factor) a unique $G$-equivariant metric turning it into a Riemannian symmetric space with cubic unit lattice.
\textsc{Loos}' proof consists of translating the problem into the language of Jordan triple systems where he solves it using extensive  calculations.
We therefore were looking for a second, possibly more geometric proof. It is led by the following ideas: The unit sphere in the tangent space
of $L/G$ at $eG$ can be identified with the boundary at infinity of $L/G$ in a $G$-equivariant way by mapping $v$ to $\gamma_{v}(\infty),$ where $\gamma_{v}$ is the geodesic in $L/G$ starting at $eG$ in direction $v.$ A symmetric $R$-space $G/G_{\xi}$ can thus be identified with an orbit $G(v)$ of the linear isotropy representation ($s$-representation) of $L/G$ and inherits a Riemannian metric.
This is called the \emph{standard embedding} of the symmetric $R$-space $G/G_{\xi}.$
\textsc{Ferus} \cite{Feru-80} has shown that a standardly embedded symmetric $R$-space is an \emph{extrinsically symmetric space} (that is
a submanifold of a Euclidean space which is invariant under the reflections along all its affine normal spaces) and, more importantly, that any compact extrinsically symmetric space of positive dimension is congruent to a symmetric $R$-space considered as an $s$-orbit (see \cite{EH-95} for a short proof).
Now our main result can be formulated as follows:

\begin{theorem}\label{THM: A}
A Riemannian symmetric space $X$ with rectangular unit lattice admits a full isometric embedding $\Phi$ in a Euclidean space $E$ whose
image is extrinsically symmetric. Moreover, such an  embedding is unique up to congruence.
\end{theorem}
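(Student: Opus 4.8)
The plan is to establish the two assertions in turn: \emph{existence} by exhibiting $X$ explicitly as a standardly embedded symmetric $R$-space, and \emph{uniqueness} by feeding this back into Ferus's classification. Throughout I write $X=U/K$ with $U$ the transvection group, fix a base point $o$, the Cartan decomposition $\mathfrak u=\mathfrak k\oplus\mathfrak p$ with $T_oX=\mathfrak p$ carrying the metric inner product, and a maximal abelian $\mathfrak a\subset\mathfrak p$, the tangent space of a maximal torus $T$; by hypothesis the unit lattice $\Lambda=\{H\in\mathfrak a:\Exp_o H=o\}$ has an orthogonal $\Z$-basis. First I would reduce to the irreducible case: the unit lattice of a Riemannian product is the orthogonal sum of the factors', a product of full extrinsically symmetric submanifolds placed in the orthogonal sum of the ambient Euclidean spaces is again full and extrinsically symmetric, and congruence respects products, so it suffices to treat $X$ irreducible and reassemble at the end. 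The flat factor is handled directly: a torus $\prod_j \R/\ell_j\Z$ with rectangular lattice embeds as a product of round circles of radii $\ell_j/2\pi$ in $\bigoplus_j\R^2$, which is full, extrinsically symmetric, and unique up to congruence once the $\ell_j$ are fixed.

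The core of the argument is the construction for irreducible $X$. Let $\Root\subset\mathfrak a^\ast$ be the restricted root system. The step I would carry out first is a precise description of $\Lambda$ in terms of $\Root$ and the fundamental group of $X$ — this is where the Theorem of Cartan--Helgason enters, identifying the relevant weight and coweight lattices and the $K$-spherical irreducible $U$-modules — and then a proof that the existence of an orthogonal $\Z$-basis of $\Lambda$ forces $X$ to carry the structure of a symmetric $R$-space: concretely, that $\mathfrak u$ sits inside a non-compact semisimple $\mathfrak l=\mathfrak g\oplus\mathfrak m$ with $\mathfrak g\cong\mathfrak u$ carrying a $\{-1,0,1\}$-grading compatible with the symmetric decomposition, with grading element $\xi\in\mathfrak m$ and $U_\xi=K$. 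The standard embedding is then $\Phi\colon X\to\mathfrak m$, $uK\mapsto u(\xi)$, via the $s$-representation; its image spans a Euclidean subspace $E\subseteq\mathfrak m$ — equivalently one recognizes $E$ as the first nonzero eigenspace of the Laplacian of $X$ and $\Phi$ as Takahashi's minimal embedding into a sphere in $E$ — and one passes to that span to make $\Phi$ full. That $\Phi$ is injective and, after the scaling fixed above, isometric is verified at $o$ from $T_\xi\Phi(X)=[\mathfrak p,\xi]$ and the structure of the bracket; extrinsic symmetry of $\Phi(X)$ then follows from Ferus's theorem, or alternatively one checks directly that the second fundamental form of $\Phi$ is parallel, which by $U$-homogeneity reduces to an identity at $o$ that is precisely the rectangularity of $\Lambda$ translated through the grading.

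I expect this last conversion to be the main obstacle and the bulk of the work: passing from a condition on a lattice in a maximal torus to the algebraic data of a compatible $3$-grading of an ambient Lie algebra, carried out directly via root systems and Euclidean root data rather than, as in Loos, via Jordan triple systems. Everything downstream is comparatively formal.

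For uniqueness, let $\Psi\colon X\hookrightarrow E'$ be any full isometric extrinsically symmetric embedding. By Ferus's theorem its image is, up to congruence, the standard embedding of a symmetric $R$-space structure on $X$. One then uses that the standard embedding of a symmetric $R$-space coincides, up to a positive scaling factor, with Takahashi's minimal embedding into the first nonzero eigenspace of the Laplacian — a construction depending only on the Riemannian manifold $X$; since by Cartan--Helgason a $K$-fixed vector in a $K$-spherical irreducible $U$-module is unique up to scale, this canonical embedding is determined up to congruence and a positive scalar, and the scalar is pinned down by requiring $\Psi$ to be isometric for the given metric. Hence $\Psi$ is congruent to $\Phi$, and the product bookkeeping of the reduction step extends this to reducible $X$.
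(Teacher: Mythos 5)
There is a genuine gap, and it sits exactly where you place it yourself. Your existence argument reduces to the claim that rectangularity of the unit lattice forces a compatible $3$-grading of an ambient non-compact algebra $\lieL=\lieG\oplus\lieM$ with grading element $\xi$ and $G_\xi=K$, after which the standard embedding $gK\mapsto g(\xi)$ does the rest; but you never indicate how to produce that grading from the lattice condition, and you explicitly defer it as ``the main obstacle and the bulk of the work.'' That deferred step \emph{is} Loos's theorem, i.e.\ the statement being proved, so the proposal as written is circular in spirit and incomplete in substance. The paper's route is genuinely different and avoids the grading altogether: using Proposition \ref{PROP: Weyl orbit} (transitivity of $W_X$ on $\{\pm e_1,\dots,\pm e_r\}$, up to the $\widehat A_{r-1}$ case) and the Cartan--Helgason--Takeuchi theorem, one takes the $K$-spherical module $V$ with highest weight $\varepsilon_1$, checks that the $K$-fixed vector has nonzero weight components only at $\pm\varepsilon_1,\dots,\pm\varepsilon_r$, concludes that the orbit map $\Phi$ sends maximal tori to Clifford tori, and invokes the Clifford-type criterion of Theorem \ref{THM: Clifford type Extr Sym} (not Ferus) to get extrinsic symmetry. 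If you want your outline to become a proof you must either supply the lattice-to-grading conversion in full or replace it by a direct construction of this kind.

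Your uniqueness argument also contains a concrete error: you identify the standard embedding with ``Takahashi's minimal embedding into the first nonzero eigenspace of the Laplacian'' and treat this as canonical. This identification is false in general: by Proposition \ref{PROP: lambda-omega} and Corollary \ref{COR: embedding into eigenspaces of Laplacian}, for type $\widehat A_{r-1}$ with $m_->2$ and for type $\widehat D_2$ with $|m_+-m_-|>2$ the embedding lands in a higher eigenspace (arbitrarily high as $m_-$ grows), so ``first eigenspace'' does not pin it down; moreover a single eigenspace may contain several inequivalent $K$-spherical summands, so even when the eigenvalue is right, uniqueness of the $K$-fixed vector inside one irreducible module does not by itself determine the embedding. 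The paper instead recovers the module directly from any full isometric equivariant extrinsically symmetric embedding: the Clifford-torus structure of the image of $T_X$ forces the weights of $\Phi(p)$ into $\{0,\pm\varepsilon_1,\dots,\pm\varepsilon_r\}$, whence $E\otimes\C\cong V(\varepsilon_1)$ (or $V(\varepsilon_1)\oplus\overline{V(\varepsilon_1)}$ in the $\widehat A_{r-1}$ case), with multiplicity one by fullness and $\dim V^K=1$; congruence then follows. Finally, your reduction of uniqueness to the indecomposable case needs the fact that an intrinsic splitting of an extrinsically symmetric submanifold is automatically extrinsic (Proposition \ref{PROP: CT intrinsic implies extrinsic splitting}); ``congruence respects products'' does not substitute for this.
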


Note that the converse of the first part of Theorem \ref{THM: A} is also true (see \cite{EHQ} and Theorem \ref{THM: Clifford type Extr Sym}): The unit lattice of a compact extrinsically symmetric submanifold is rectangular.
By means of \textsc{Ferus}' result the first part of Theorem \ref{THM: A}
shows that a Riemannian symmetric space with rectangular unit lattice is affinely equivalent to a symmetric $R$-space and thus provides a new proof of the main result of \textsc{Loos}. As a direct consequence of the uniqueness statement of  Theorem \ref{THM: A}  every (intrinsic) isometry between full compact extrinsically symmetric spaces
is induced from an isometry between the ambient Euclidean spaces (Corollary \ref{COR: Extension of isometries}).\par

Our proof of Theorem \ref{THM: A} is based on the following considerations: Let $X=G/K$ be a compact Riemannian symmetric space
with rectangular unit lattice. Since compact extrinsically symmetric spaces are essentially $s$-orbits that are symmetric $R$-spaces,
we are looking for a $G$-equivariant embedding $\Phi:X\to E$ in a Euclidean vector space $E$ on which $G$ acts by orthogonal transformations. Such an embedding $\Phi$ maps $eK$ to a non-zero element $v_o\in E$ which is fixed by $K.$
An irreducible representation of $G$ with a non-zero $K$-fixed vector is called \emph{$K$-spherical} and may be seen as an irreducible representation of $X.$ According to a theorem of \textsc{Cartan}, \textsc{Helgason} and \textsc{Takeuchi} (see \cite{TakeBook} and Appendix \ref{SEC: Cartan-Helgason-Thm}) the complex irreducible representations of $X=G/K$ are parameterized by orbits of the Weyl group $W_X$ of $X$ acting on the dual $\Gamma_X^*$ of the the unit lattice $\Gamma_X$ of $X,$ in complete analogy to the highest weight theorem for representations of compact Lie groups. If $X$ is indecomposable (that is not a Riemannian product of two symmetric spaces of positive dimension) and $\varepsilon_1,\dots, \varepsilon_r\in\Gamma_X^*$ is the dual basis of an orthogonal basis of $\Gamma_X,$ then we show
that $W_X(\varepsilon_1)$ contains $\{\varepsilon_1,\dots,\varepsilon_r\}$ (Proposition \ref{PROP: Weyl orbit}). It is therefore tempting
to consider the complex irreducible $K$-spherical $G$-module $V$ with the lowest possible `highest weight' $W_X(\varepsilon_1).$ If
$v_0\in V$ is a non-zero fixed vector of $K$ and $V$ is endowed with the real part of a suitable $G$-invariant hermitian metric, then
$\Phi:X=G/K\to V,\; gK\mapsto gv_0,$ turns out to be an isometric embedding. Moreover, since there are no non-zero weights strictly lower than $\varepsilon_1,$ $\Phi$ maps maximal tori of $X$ to Clifford tori in $V,$ that is to products of circles in pairwise orthogonal planes. By a previous result of the authors \cite{EHQ} (see Theorem
\ref{THM: Clifford type Extr Sym}) $\Phi(X)$ is extrinsically symmetric.\par

To prepare the proof as well as to investigate Riemannian symmetric spaces with rectangular unit lattice by themselves we study in the first part of this paper the relationship between the unit lattice and the (restricted) root system of a compact Riemannian symmetric space. Several of the results are known. However, our proofs are more direct neither using the correspondence between symmetric spaces with rectangular unit lattice and symmetric $R$-spaces nor classification results. We formalize the relationship between unit lattices and root systems by defining the notion of a \emph{Euclidean root datum} of a compact Riemannian symmetric space generalizing the root data of compact Lie groups (equivalently of complex reductive algebraic groups). A Euclidean root datum not only contains the information about the root system, but also the position of the symmetric space within the poset  of symmetric quotients of its universal cover and, in particular, its fundamental group. We compute explicitly the Euclidean root data for indecomposable symmetric spaces with rectangular unit lattice and show that there are precisely five possibilities (Theorem \ref{THM: possible root datum types}). As applications we get a generalization of the polysphere theorem for hermitian symmetric spaces of compact type to all simply connected symmetric spaces with rectangular unit lattice (Theorem \ref{THM: Polysphere}) and an explicit formula for the eigenvalues of the Laplacian for this class of spaces (Proposition \ref{PROP: lambda-omega}). The last result implies in particular that the extrinsically symmetric embeddings $\Phi$ from Theorem \ref{THM: A} are embeddings in the first eigenspace of the Laplacian in almost all cases (Corollary \ref{COR: embedding into eigenspaces of Laplacian}), confirming a result of \textsc{Ohnita} \cite{Ohnita} about standard embeddings of symmetric $R$-spaces.


\section{Preliminaries}
\label{SEC: preliminaries}

We recall some standard facts about symmetric spaces and fix notations. For more details see e.\ g.\ \cite{Helg-78, Loos2}.\par
In the following a symmetric space will always mean a connected Riemannian symmetric space of positive dimension, usually denoted by $X.$ Thus for each $x\in X$ there exists an involutive isometry $s_x$ of $X,$ called the \emph{geodesic symmetry} at $x$,
that has $x$ as an isolated fixed point.
We denote by $I(X)$ the group of isometries of $X$ and by
$G=G(X)$ the closed subgroup of $I(X)$ that is generated by the transvections $s_p\circ s_q$ with $p,q\in X.$
Then $G$ is a connected Lie group that acts transitively and effectively on $X$
and we call it the  \emph{transvection group} of $X.$
If $X$ is compact, then $G$ coincides with
the connected component of $I(X)$ containing the identity.\par
We now fix a base point $p\in X$ and let $K=G_p$ be the isotropy group of $G$ at $p.$ This yields an identification
$G/K\to X,\; gK\mapsto g.p:=g(p),$ which becomes an isometry if $G/K$ is endowed with the pull-back metric. This is $G$-invariant and the
geodesic symmetry $s_p$ of $X$  at $p$ corresponds on $G/K$ to the map $gK\mapsto \sigma(g)K,$ where $\sigma$ is the involutive
automorphism of $G$ given by the conjugation with $s_{p}.$ Since $K$ is an open subgroup of $G^{\sigma}=\{g\in G:\; \sigma(g)=g\},$
$(G,K)$ is a symmetric pair.
The differential of $\sigma$ at $e\in G,$ also denoted by $\sigma,$
induces a splitting of the Lie algebra $\lieG$ of $G$ as $\lieG=\lieK\oplus\lieP$ into the fixed point sets $\lieK$
and $\lieP$ of $\sigma$ and $-\sigma,$ respectively. Note that $\lieK$ is the Lie algebra of $K.$ We identify $\lieP$ with $T_{p}X$ via
$v\mapsto \left.\frac{d}{dt}\right|_{t=0}(\exp(tv).p),$
where $\exp:\lieG\to G$ is the Lie theoretic exponential map,
and we take  on $\lieP$ the pull-back inner product.
Using this identification the curvature tensor $R$ of $X$ at $p$ is given by
$R(X,Y)Z=[Z,[X,Y]]$
for all $X,Y,Z\in \lieP$ and the Riemannian exponential map
$\Exp_p:T_pX\to X$ of $X$ at $p$ satisfies $\Exp_p(V)=\exp(V).p$ for all $V\in\lieP.$\par

We further fix a  {maximal flat} $F_X$ of $X$ through $p,$ that is a connected, flat, totally geodesic submanifold of $X$ of maximal dimension.
Since $G$ acts transitively on the set of pairs $(x,F),$ where $x\in X$ and $F$ is a maximal flat of $X$ containing $x,$
 the \emph{rank} of $X,$ which is the dimension of a maximal flat of $X,$ is well-defined.
The tangent space of $F_X$ at $p$
is identified with a maximal abelian subspace $\lieA$ of $\lieP.$ Conversely, any maximal abelian subspace $\lieA$ of $\lieP$ gives rise to
the maximal flat $\exp(\lieA).p$ of $X$ through $p.$ \par
The \emph{unit lattice} of a symmetric space $X$ (with respect to $(p,F_X)$) is
$$\Gamma_X:=\{H\in\lieA:\; \exp(H).p=p\}=\{H\in\lieA:\; \exp(H)\in K\},$$
which is  a \emph{lattice} in $\lieA,$ that is a discrete subgroup of the additive group $\lieA.$
$\Gamma_X$ is full in $\lieA,$ that is it spans $\lieA,$ if and only if
 $X$ is compact. In any case the Riemannian exponential map identifies
$\lieA/\Gamma_X$ with $F_X.$
If $X$ is compact $\lieA/\Gamma_X$ is a flat torus and $F_X$ is called a \emph{maximal torus} of $X,$ also  denoted by $T_X.$\par

Assume from now on that $X$ is compact or, more generally, covers a compact symmetric space. For an element $\alpha\in\lieA^*=\Hom(\lieA,\R)$ let
$$\lieG_{\alpha}:=\{Z\in\lieG\otimes\C: \ad(H)Z=2\pi i\alpha(H)Z\; \text{for all}\;  H\in\lieA\}$$ and
$$
 \Root_X:=\big\{\alpha\in \lieA^*\setminus\{0\}:\; \lieG_{\alpha}\neq\{0\}\big\}.
$$
Then
$[\lieG_\alpha,\lieG_\beta]\subset\lieG_{\alpha+\beta}$ for all $\alpha,\beta\in\lieA^*$ and
$$\lieG\otimes\C=\lieG_0\oplus\bigoplus_{\alpha\in\Root_X}\lieG_{\alpha}.$$
$\Root_X$ is called the \emph{root system} of $X$ (with respect to $(p,F_X)$) although it is only a root system in a subspace of $\lieA^*.$ Note that up to isomorphism the triple $(\lieA,\Gamma_X,\Root_X)$
 only depends on $X,$ that is if $(p,F)$ is replaced by $(p',F')$ and thus $\lieA,\; \Gamma_X,\; \Root_X$ by
 $\lieA',\; \Gamma'_X,\; \Root'_X$ then there exists a linear isometry $\varphi:\lieA\to\lieA'$ with $\varphi(\Gamma_X)=\Gamma'_X$ and
 $\varphi(\Root_X)=\Root'_X$ where  $\varphi(\alpha)=\alpha\circ \varphi^{-1}$ for $\alpha\in\lieA^*.$
The interplay between the unit lattice and the set of roots is discussed in Appendix \ref{APPENDIX: unit lattice of compact sym space}.\par
If $\pi:X\to X'$ is a Riemannian covering of symmetric spaces then $G(X)$ covers $G(X')$ as the geodesic symmetries of $X$
push down to geodesic symmetries of $X'$ inducing a surjective homomorphism $G=G(X)\to G(X')=G'.$ Its kernel is contained in the group of deck transformations of $\pi$ and thus discrete. We may assume $\lieG=\lieG'$ for the Lie algebras of $G$ and $G'$ and further $\lieK=\lieK'$
and $\lieA=\lieA'$ by choosing appropriate base points and maximal flats in $X$ and $X'.$ In particular
$\Gamma_X\subset\Gamma_{X'}$ and $\Root_X=\Root_{X'}.$
Since the universal cover of $X$ splits as $X_0\times X_1,$ where $X_0$ is a Euclidean space and $X_1$ is a simply connected symmetric space of compact type (one of them possibly a point), $\lieA$ splits orthognally as $\lieA=\lieA_0\oplus \lieA_1$ with $\lieA_0=\bigcap\limits_{\alpha\in\Root_X} \mathrm{ker}(\alpha).$ Restriction to $\lieA_1$ yields an isometric isomorphism from
$\mathrm{span}(\Root_X)$ to $\lieA_1^*$ that maps $\Root_X$ onto the root system of $X_1.$ In particular, $\Root_X$ is a root system in its linear span. \par
The  \emph{Weyl group} $W_X$ of $X$ (corresponding to $(p,F)$)
is the subgroup of the orthogonal group of $\lieA$ that is generated by the orthogonal reflections $s_\alpha$ along the hyperplanes
$\mathrm{ker}(\alpha)$ with $\alpha\in\Root_X.$
Recall that $$s_{\alpha}(H)=H-\alpha(H)\alpha^\vee$$ for all $H\in\lieA,$ where $\alpha^\vee\in\lieA$ is the unique vector orthogonal to
$\mathrm{ker}(\alpha)$ with $\alpha(\alpha^\vee)=2.$ It is equal to $\frac{2}{\|H_\alpha\|^2}H_\alpha,$ where
$H_\alpha\in\lieA$ is determined by $\alpha(H)=\langle H_{\alpha}, H\rangle$ for all $H\in\lieA.$ $\Root^\vee_X:=\{\alpha^\vee:\; \alpha\in\Root_X\}$
is called the \emph{set of inverse roots} and is a root system in $\lieA_0^\perp.$
The Weyl group has a second description as $W_X=M'/M$
with
$M'=\{k\in K:\;  \Ad(k)\lieA=\lieA\}$ and
$M=\{k\in K:\;  \Ad(k)H=H\; \text{for all}\; H\in\lieA\}$
 (see e.g.\ \cite[Prop.\ 2.2, p.\ 67]{Loos2}).
In particular, $W_X$ preserves the unit lattice $\Gamma_X$
and the set of roots $\Root_X.$
 Note that $W_X$ also acts on $\lieA^*$ by $w.\varphi:=\varphi\circ w^{-1}$
 for all $w\in W_X$ and $\varphi\in\lieA^*.$\par
A symmetric space $X$ is called \emph{decomposable} if $X$ is isometric to a
Riemannian product of positive dimensional Riemannian manifolds. Otherwise $X$ is called \emph{indecomposable}.

\begin{lemma}[Splitting Lemma for compact symmetric spaces]
\label{LEM: splitting}
Let $X\cong G/K$ be a compact symmetric space. Then the following assertions are equivalent:
\begin{enumerate}[(1)]
 \item $X$ is decomposable.
 \item $\lieA$ splits
 as an orthogonal direct sum $\lieA=\lieA_1\oplus\lieA_2$  of non-trivial $W_X$-invariant subspaces such that
 the unit lattice $\Gamma_X$ of $X$ splits accordingly as $\Gamma_X=(\Gamma_X\cap\lieA_1)+
 (\Gamma_X\cap\lieA_2).$
\end{enumerate}
\end{lemma}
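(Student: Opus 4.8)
The plan is to prove the two implications separately; $(1)\Rightarrow(2)$ is routine and $(2)\Rightarrow(1)$ is the substantial one. For $(1)\Rightarrow(2)$: if $X=X'\times X''$ with both factors of positive dimension, then $G(X)=G(X')\times G(X'')$, and choosing $p=(p',p'')$ together with a maximal flat $F_X=F_{X'}\times F_{X''}$ gives $\lieA=\lieA'\oplus\lieA''$ with $\lieA'\subseteq T_{p'}X'$, $\lieA''\subseteq T_{p''}X''$ maximal abelian, both non-zero since the rank of a positive-dimensional symmetric space is at least $1$. Here $W_X=W_{X'}\times W_{X''}$ acts componentwise, so $\lieA',\lieA''$ are $W_X$-invariant, and $T_X=T_{X'}\times T_{X''}$ yields $\Gamma_X=\Gamma_{X'}\oplus\Gamma_{X''}=(\Gamma_X\cap\lieA')+(\Gamma_X\cap\lieA'')$; one takes $\lieA_1=\lieA'$, $\lieA_2=\lieA''$.

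For $(2)\Rightarrow(1)$, assume $\lieA=\lieA_1\oplus\lieA_2$ as in (2). First I would split the restricted root system: since the $\lieA_i$ are $W_X$-invariant and orthogonal, applying $s_\alpha$ to $H\in\lieA_1$ and reading off $s_\alpha(H)=H-\alpha(H)\alpha^\vee\in\lieA_1$ shows that every $\alpha\in\Root_X$ satisfies either $\alpha^\vee\in\lieA_1$ (equivalently $\alpha$ vanishes on $\lieA_2$) or $\alpha^\vee\in\lieA_2$ (equivalently $\alpha$ vanishes on $\lieA_1$); hence $\Root_X=\Root_1\sqcup\Root_2$ with $\Root_i:=\Root_X\cap\lieA_i^*$, where $\lieA_i^*$ is identified with the annihilator of $\lieA_{3-i}$ in $\lieA^*$. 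Taking $G=G(X)$, so that $\lieK=[\lieP,\lieP]$, and using $\lieP=\lieA\oplus\bigoplus_{\alpha\in\Root_X^+}\lieP_\alpha$, I set $\lieP_i:=\lieA_i\oplus\bigoplus_{\alpha\in\Root_i^+}\lieP_\alpha$, so that $\lieP=\lieP_1\oplus\lieP_2$ orthogonally. Since $\pm\alpha\pm\beta\notin\Root_X\cup\{0\}$ whenever $\alpha\in\Root_1$ and $\beta\in\Root_2$, a bracket computation in $\lieG\otimes\C$ gives $[\lieP_1,\lieP_2]=0$, and tracking $\lieA$-weights yields $[[\lieP_i,\lieP_i],\lieP_i]\subseteq\lieP_i$, so each $\lieP_i$ is a Lie triple system. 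Putting $\lieK_i:=[\lieP_i,\lieP_i]$ and $\lieG_i:=\lieK_i\oplus\lieP_i$, one checks that $\lieG_1,\lieG_2$ are $\sigma$-invariant ideals with $[\lieG_1,\lieG_2]=0$ and $\lieG=\lieG_1\oplus\lieG_2$, $\lieK=\lieK_1\oplus\lieK_2$; the only place the effectiveness of the $G$-action enters is the identity $\lieZ(\lieG)\cap\lieK=0$, which forces these sums to be direct.

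The final and key step is to globalize this infinitesimal splitting, and this is where the hypothesis on $\Gamma_X$ is needed — it rules out obstructions such as a flat torus with a non-rectangular lattice, whose (empty) root system splits trivially although the torus itself is indecomposable. The Lie triple decomposition $\lieP=\lieP_1\oplus\lieP_2$ with $[\lieP_1,\lieP_2]=0$ yields a product decomposition $\tilde X=\tilde X_1\times\tilde X_2$ of the simply connected cover into symmetric spaces with Lie triple systems $\lieP_i$, both positive-dimensional since $\lieA_i\ne 0$; as $\lieA_i$ is maximal abelian in $\lieP_i$ it is the flat direction of $\tilde X_i$, so $\Gamma_{\tilde X}=\Gamma_{\tilde X_1}\oplus\Gamma_{\tilde X_2}$ already splits along $\lieA_1\oplus\lieA_2$. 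Now the deck group $\Gamma=\pi_1(X)$ of $\tilde X\to X$ must centralize the transvection group $\tilde G=G(\tilde X)$ (transvections of $X$ lift to $\Gamma$-equivariant transvections of $\tilde X$, and $\tilde G$ is connected), hence each $\gamma\in\Gamma$ preserves the base-point maximal flat $F_{\tilde X}=\lieA/\Gamma_{\tilde X}$ and acts on it by a translation; this gives an injection $\Gamma\hookrightarrow\lieA/\Gamma_{\tilde X}$, $\gamma\mapsto t_\gamma$, with image exactly $\Gamma_X/\Gamma_{\tilde X}$. Since $\Gamma_{\tilde X}$ splits along $\lieA_1\oplus\lieA_2$ and $\Gamma_X$ splits by hypothesis, the quotient $\Gamma_X/\Gamma_{\tilde X}\cong\Gamma$ splits accordingly: $\Gamma=\Gamma^{(1)}\times\Gamma^{(2)}$ with $\Gamma^{(i)}$ the set of $\gamma$ having $t_\gamma\in\lieA_i/\Gamma_{\tilde X_i}$. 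Such a $\gamma$ fixes the base point in the $\tilde X_{3-i}$-direction and commutes with $\tilde G_{3-i}$, which acts transitively on $\tilde X_{3-i}$; from this one deduces that $\gamma$ acts trivially on $\tilde X_{3-i}$. Hence $X=\tilde X/\Gamma=(\tilde X_1/\Gamma^{(1)})\times(\tilde X_2/\Gamma^{(2)})$ is a Riemannian product of two positive-dimensional symmetric spaces, so $X$ is decomposable.

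I expect the splitting of the root system and the assembly of the Lie algebra splitting to be entirely routine (standard symmetric-space structure theory). The main obstacle is the globalization: relating $\pi_1(X)$ to the unit lattice through the identification $\Gamma_X/\Gamma_{\tilde X}\cong\pi_1(X)$ (coming from the translational action of deck transformations on the maximal flat), and then using the hypothesised splitting of $\Gamma_X$ together with the automatic splitting of $\Gamma_{\tilde X}$ to force $\pi_1(X)$ itself to split compatibly with $\tilde X=\tilde X_1\times\tilde X_2$.
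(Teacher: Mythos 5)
Your proof is correct, and the direction $(2)\Rightarrow(1)$ takes a genuinely different route from the paper's. The infinitesimal part (splitting $\Root_X$ according to whether $H_\alpha\in\lieA_1$ or $\lieA_2$, hence $\lieP=\lieP_1\oplus\lieP_2$) coincides with the paper's; the difference is the globalization. The paper stays inside $X$: it forms the totally geodesic submanifolds $X_j=\exp(\lieP_j).p$, obtains a covering $X_1\times X_2\to X$, observes that the lattice hypothesis makes this covering injective on the maximal torus $T_X\cong T_1\times T_2$, and concludes injectivity everywhere because any two points of $X_1\times X_2$ lie on a geodesic and hence in a common maximal torus. You instead go up to the universal cover $\tilde X=\tilde X_1\times\tilde X_2$ and split the deck group: deck transformations centralize $G(\tilde X)$, preserve the base maximal flat and act on it by translations, giving $\pi_1(X)\cong\Gamma_X/\Gamma_{\tilde X}$, and the hypothesized splitting of $\Gamma_X$ together with the automatic splitting of $\Gamma_{\tilde X}$ forces $\pi_1(X)=\Gamma^{(1)}\times\Gamma^{(2)}$ acting factorwise, so $X=(\tilde X_1/\Gamma^{(1)})\times(\tilde X_2/\Gamma^{(2)})$. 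Your mechanism is exactly the one the paper uses in the appendix to prove $\pi_1(X)\cong\Gamma_X/\Gamma_0$ (Theorem \ref{THM: unit lattice sym space}(iv)), so there is no circularity; but be aware that your ``hence'' after the centralization claim is doing real work: commuting with $G(\tilde X)$ alone does not give preservation of the specific flat $F_{\tilde X}$ — one needs, as in that appendix proof, that $q$ and $\gamma(q)$ lie in a common maximal flat which an element of $G(\tilde X)$ fixing $q$ carries to $F_{\tilde X}$, and likewise the identification of the image of $\gamma\mapsto t_\gamma$ with $\Gamma_X/\Gamma_{\tilde X}$ uses that $F_{\tilde X}\to F_X$ is the restricted covering. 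These are routine and standard. What each approach buys: the paper's is shorter and avoids the deck-group analysis entirely, while yours yields extra structural information (an explicit splitting of $\pi_1(X)$ and of the deck group compatible with the de Rham factors) and ties the lemma directly to the Euclidean root datum picture; your construction of the ideals $\lieG_1,\lieG_2$ is more than is needed, since the Lie triple decomposition with $[\lieP_1,\lieP_2]=0$ already suffices for the splitting of $\tilde X$.
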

\begin{proof}
Obviously the first statement implies the second one. We now assume the second assertion.
Since for all $\alpha\in\Root_X$ the reflection
$s_\alpha$ is an element of $W_X,$
$\lieA_1$ and $\lieA_2$ split into eigenspaces of $s_\alpha.$
The $(-1)$-eigenspace of such an $s_\alpha$ is generated by $H_\alpha\in\lieA.$
Thus for any $\alpha\in\Root_X$ we have either $H_\alpha\in\lieA_1$
or $H_\alpha\in\lieA_2.$ Hence the set of roots of $X$ is a disjoint union
$\Root_X=\Root_1\cup\Root_2$ where
$\Root_j=\{\alpha\in\Root_X:\; H_\alpha\in \lieA_j\},\; j=1,2$ (cf.\ \cite[Prop.\ 10, p.\ V-21]{Serre}).
Thus
$\lieP\cong T_oX$ splits orthogonally into two curvature invariant subspaces
$\lieP=\lieP_1\oplus\lieP_2,$ where
$\lieP_j=\lieA_j+\sum_{\alpha\in\Root_j}(\lieG_\alpha+\lieG_{-\alpha})\cap\lieP,\; j=1,2.$
Consequently $X$ splits locally as $X_1\times X_2,$ where
$X_j=\exp(\lieP_j).p,\; j\in\{1,2\},$ are totally geodesic submanifolds of $X$
(see e.g.\ \cite[Chap.\ IV, \S 7]{Helg-78}), and $X$ is covered by $X_1\times X_2.$\par
On the other hand, the maximal torus of $X$ splits globally as
$T_X =\exp(\lieA).p\cong\lieA/\Gamma_X=(\lieA_1+\lieA_2)/((\Gamma_X\cap\lieA_1)+(\Gamma_X\cap\lieA_2))\cong
(\lieA_1/(\Gamma_X\cap\lieA_1))\times (\lieA_2/(\Gamma_X\cap\lieA_2))=T_1\times T_2$
where $T_j = \exp(\lieA_j).p\cong\lieA_j/(\Gamma_X\cap \lieA_j)$ for $j=1,2.$
Thus when restricted to $T_X =T_1\times T_2$
(or to any other maximal torus of $X_1\times X_2$) the covering $\pi:X_1\times X_2 \to X$ is one-to-one.
Let now $q$ and $q'$ be two points in $X_1\times X_2$ with
$\pi(q)=\pi(q').$ Since $q$ and $q'$ can be joint by a geodesic,
there is a maximal torus of $X_1\times X_2$ containing both points
$q$ and $q'.$ Since the restriction of $\pi$ to this maximal torus is injective, we get
$q=q'.$ Thus $\pi$ is globally one-to-one and
$X$ splits as a product $X_1\times X_2.$
\end{proof}


\section{The Roots of a Symmetric Space with Rectangular Unit Lattice}
\label{SEC: Roots of symmetric spaces with rectangular unit lattice}
In this section we analyse the interplay between the unit lattice and the set of roots of symmetric spaces with rectangular unit lattice.\par

A symmetric space $X$ has \emph{rectangular} unit lattice $\Gamma_X\subset \lieA,$ if  there exists
an orthogonal basis $e_1,\dots, e_r$  of $\lieA$ whose $\Z$-span is $\Gamma_X.$
If $e_1,\dots, e_r$ can be chosen to have the same length, then $X$ is said to have
\emph{cubic} unit lattice. A symmetric space with rectangular unit lattice is necessarily compact as its maximal flats are compact.\par
Symmetric spaces with rectangular unit lattice include extrinsically symmetric space (see  \cite{EHQ} and the references therein), in particular hermitian symmetric spaces of compact type. We will see in Section \ref{SECT: rectangular unit lattice embedding} that every symmetric space with a rectangular unit lattice can be realized as an extrinsically symmetric space.\par

In the remainder of this section $X$ will always denote an indecomposable symmetric space of rank $r\geq 1$ with rectangular unit lattice $\Gamma_X,$
$e_1,\dots,e_r$ an orthogonal  basis of $\Gamma_X,$ and $\varepsilon_1,\dots, \varepsilon_r$ the corresponding dual basis of
$\lieA^*.$

\begin{proposition} \label{PROP: Weyl orbit}
The Weyl group $W_X$ leaves $\{\pm e_1,\dots, \pm e_r\}$ invariant and there are two possibilities:
\begin{enumerate}[(I)]
 \item $X$ is of compact type and $W_X$ acts transitively on $\{\pm e_1,\dots, \pm e_r\}.$
 \item The Euclidean factor in the universal cover of $X$ is 1-dimensional and
  the action of $W_X$ on $\{\pm e_1,\dots, \pm e_r\}$ has two different orbits which differ by a sign. Each orbit is a basis of $\Gamma_X$ and has
  the form $\{\lambda_1e_1,\dots, \lambda_re_r\}$ for suitable $\lambda_j\in\{-1,\; 1\}.$
\end{enumerate}
In particular, $e_1,\dots, e_r$ is always a cubic basis.
\end{proposition}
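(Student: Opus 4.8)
The plan is to first determine the combinatorial action of $W_X$ on $\{\pm e_1,\dots,\pm e_r\}$, and then to read off the dichotomy from the de Rham splitting $\lieA=\lieA_0\oplus\lieA_1$ together with the Splitting Lemma~\ref{LEM: splitting}. \emph{Step 1: $W_X$ permutes $\{\pm e_1,\dots,\pm e_r\}$.} I would first record the elementary fact that an orthogonal $\Z$-basis of a lattice is unique up to signs and a permutation: if $b_1,\dots,b_r$ and $b_1',\dots,b_r'$ are orthogonal $\Z$-bases of the same lattice $\Lambda$, then $b_i'=\pm b_{\pi(i)}$ for some permutation $\pi$. (Induction on $r$: writing $v=\sum c_ib_i\in\Lambda\setminus\{0\}$ one has $\|v\|^2=\sum c_i^2\|b_i\|^2$, so the shortest non-zero vectors of $\Lambda$ are exactly the $\pm b_i$ of minimal length; these form the same set for both bases, and restricting to the orthogonal complement of their span — again a rectangular lattice of smaller rank — finishes the induction.) Since every $w\in W_X$ is a linear isometry of $\lieA$ with $w(\Gamma_X)=\Gamma_X$, the tuple $w(e_1),\dots,w(e_r)$ is again an orthogonal $\Z$-basis of $\Gamma_X$, so $w$ permutes $\{\pm e_1,\dots,\pm e_r\}$. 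In particular $W_X$ acts on the set of $r$ lines $[\pm e_1],\dots,[\pm e_r]$.

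\emph{Step 2: $W_X$ is transitive on these lines, whence the $e_i$ are cubic.} Let $O_1,\dots,O_s$ be the orbits of $W_X$ on the lines and set $\lieA^{(t)}:=\spann\{e_i:[\pm e_i]\in O_t\}$. As the $e_i$ are pairwise orthogonal, $\lieA=\lieA^{(1)}\oplus\dots\oplus\lieA^{(s)}$ is an orthogonal sum of non-trivial $W_X$-invariant subspaces, and $\Gamma_X=\bigoplus_t(\Gamma_X\cap\lieA^{(t)})$ since each summand is the $\Z$-span of the corresponding $e_i$. If $s\geq2$, then Lemma~\ref{LEM: splitting} applied to $\lieA^{(1)}$ and $\bigoplus_{t\geq2}\lieA^{(t)}$ shows $X$ decomposable, contrary to hypothesis. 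Hence $s=1$: $W_X$ acts transitively on $[\pm e_1],\dots,[\pm e_r]$. Transitivity of a group of isometries on these lines forces $\|e_1\|=\dots=\|e_r\|$, which is the last assertion of the proposition.

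\emph{Step 3: the two cases.} Write $\lieA=\lieA_0\oplus\lieA_1$ with $\lieA_0=\bigcap_{\alpha\in\Root_X}\ker\alpha$, the tangent space of the Euclidean factor of the universal cover; $W_X$ fixes $\lieA_0$ pointwise and has no non-zero fixed vector in $\lieA_1$ (as $\Root_X$ spans $\lieA_1^*$), so $\lieA^{W_X}=\lieA_0$. Let $\pi_0\colon\lieA\to\lieA_0$ be the orthogonal projection; it is $W_X$-equivariant, so if $w(e_1)=\pm e_j$ then $\pi_0(e_j)=\pm\pi_0(e_1)$, and by Step~2 every $\pi_0(e_i)$ equals $\pm\pi_0(e_1)$. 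Since the orbit $W_X(e_1)\subseteq\{\pm e_1,\dots,\pm e_r\}$ maps onto all $r$ lines, either it is all of $\{\pm e_1,\dots,\pm e_r\}$, or it meets each line in a single vector $\lambda_ie_i$ (so that $W_X(-e_1)=\{-\lambda_1e_1,\dots,-\lambda_re_r\}$ is a disjoint second orbit, and both orbits are $\Z$-bases of $\Gamma_X$). In the first case $\pi_0(e_1)=\pi_0(-e_1)=-\pi_0(e_1)$ gives $\pi_0(e_1)=0$, hence $\lieA_0=\pi_0(\lieA)=\spann\{\pi_0(e_1),\dots,\pi_0(e_r)\}=\{0\}$ and $X$ is of compact type: case~(I). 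In the second case $\sum_i\lambda_ie_i$, being a sum over the $W_X$-orbit $W_X(e_1)$, is $W_X$-fixed, hence lies in $\lieA_0$, and it is non-zero because $e_1,\dots,e_r$ are linearly independent; thus $\lieA_0\neq\{0\}$, and since $\lieA_0=\spann\{\pi_0(e_1),\dots,\pi_0(e_r)\}=\R\,\pi_0(e_1)$ is spanned by a single vector, $\dim\lieA_0=1$: case~(II). The two cases are mutually exclusive and exhaustive, so the proof is complete.

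The only step requiring real insight is Step~2: indecomposability is used precisely through the Splitting Lemma to force transitivity of $W_X$ on the $r$ coordinate lines; once that is available, the orthogonal projection onto the flat factor $\lieA_0$ mechanically separates the compact-type case from the one with a one-dimensional Euclidean factor, and also delivers cubicity.
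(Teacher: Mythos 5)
Your proof is correct and follows essentially the same route as the paper: the shortest-vector observation for rectangular lattices, the Splitting Lemma plus indecomposability to force a single $W_X$-orbit of coordinate lines (hence cubicity), and the identification of the Euclidean factor with the $W_X$-fixed subspace $\lieA_0$. The only real variation is your Step~1 lemma that an orthogonal $\Z$-basis is unique up to signs and permutation, which yields the invariance of $\{\pm e_1,\dots,\pm e_r\}$ without indecomposability (the paper obtains invariance only after invoking the Splitting Lemma), while the paper works directly with $W_X(e_1)\cup W_X(-e_1)$ for a shortest $e_1$; the case split via the projection $\pi_0$ is just a reformulation of the paper's fixed-vector argument.
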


\begin{proof}
By orthogonality the set of shortest vectors in $\Gamma_X\setminus\{0\}$
is contained in the set $\{\pm e_1,\dots, \pm e_r\}.$ Thus $\{e_1,\dots, e_r\}$ contains
an element of shortest length in $\Gamma_X\setminus\{0\},$ say $e_1.$
Since $W_X$ acts isometrically on $\lieA$ leaving $\Gamma_X$ invariant,
$W_X(e_1)\cup W_X(-e_1)$ consists of shortest elements of $\Gamma_X\setminus\{0\}$
and is therefore contained in $\{\pm e_1,\dots, \pm e_r\}.$ Let
$J=\big\{j\in\{1,\dots, r\}:\; e_j\in W_X(e_1)\cup W_X(-e_1)\big\}.$
Then $\lieA=\lieA_1\oplus \lieA_2$ with $\lieA_1:=\mathrm{span}_{\R}\{e_j:\; j\in J\}$ and
$\lieA_2=\mathrm{span}_{\R}\big\{e_k:\; k\in\{1,\dots, r\}\setminus J\big\}.$ This
is a $W_X$-invariant orthogonal decomposition of $\lieA$ such that
$\Gamma_X=(\Gamma_X\cap\lieA_1)+
(\Gamma_X\cap\lieA_2).$ Since $X$ is indecomposable, Lemma \ref{LEM: splitting} yields $J=\{1,\dots, r\}.$
This shows
$W_X(e_j)\cup W_X(-e_j)=\{\pm e_1,\dots, \pm e_r\}$ for every $j\in\{1,\dots, r\}.$ Thus the action
 of $W_X$ on $\{\pm e_1,\dots, \pm e_r\}$ has at most two orbits.
 If there are two different orbits, then each orbit must be a basis of $\Gamma_X$ and the two orbits differ by a sign.\par
A positive dimensional Euclidean factor in the de Rham decomposition for the universal cover of $X$ corresponds to a nonzero linear
subspace $\lieA_0$ in $\lieA$ on which $W_X$ acts trivially.
If $W_X$ acts transitively on $\{\pm e_1,\dots, \pm e_r\},$ then the only fixed vector of $W_X$ in $\lieA$ is $0,$ that is
 $\lieA_0=\{0\}.$ In the other case we may assume (after suitably changing the signs) that $\{e_1,\dots, e_r\}$ is a $W_X$-orbit.
Then $\lieA_0=\R\cdot \sum_{j=1}^r e_j$ and the Euclidean factor in the de Rham decomposition of $\tilde{X}$ has dimension one.
\end{proof}

\begin{proposition}[c.\ f.\ {\cite[Lemma 7]{LoosCubic}}]
\label{PROP: possible roots}
 $2\Root_X\subset\{\pm \varepsilon_j,\; \pm 2\varepsilon_j,\; \pm (\varepsilon_j\pm \varepsilon_k):\; 1\leq j, k\leq r\}.$
\end{proposition}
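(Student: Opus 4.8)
The plan is to combine the rigidity of the $W_X$-action provided by Proposition~\ref{PROP: Weyl orbit} with the relationship between the unit lattice and the roots treated in Appendix~\ref{APPENDIX: unit lattice of compact sym space}. First I would fix the directions of the roots. By Proposition~\ref{PROP: Weyl orbit} the basis $e_1,\dots,e_r$ is cubic, say $\|e_i\|=\ell$ for all $i$, and $W_X$ permutes $\{\pm e_1,\dots,\pm e_r\}$; hence $W_X$ is a subgroup of the hyperoctahedral group of signed permutations of $e_1,\dots,e_r$. For $\alpha\in\Root_X$ the reflection $s_\alpha$ is such a signed permutation, and it has a fixed space of codimension one; a short case analysis (a signed permutation with codimension-one fixed space must negate a single $e_j$, interchange two of the $e_j$, or interchange two of them together with a sign change) shows that $\ker\alpha$ equals $\ker\varepsilon_j$, $\ker(\varepsilon_j-\varepsilon_k)$, or $\ker(\varepsilon_j+\varepsilon_k)$ for suitable $j\neq k$. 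Thus every root is a nonzero real multiple of $\varepsilon_j$, of $\varepsilon_j-\varepsilon_k$, or of $\varepsilon_j+\varepsilon_k$, and since $-\alpha$ is a root as well I may assume the multiple is positive.

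It remains to bound this multiple, and here two facts enter. The first is an integrality statement that I can prove directly: for $H\in\Gamma_X$ one has $\exp(H)\in K\subseteq G^\sigma$, and since $\sigma$ acts as $-\Id$ on $\lieA$ this forces $\exp(H)=\sigma(\exp H)=\exp(-H)$, i.e.\ $\exp(2H)=e$, whence $\Ad(\exp 2H)=\Id$; applied to $\lieG_\alpha\neq\{0\}$ this gives $e^{4\pi i\alpha(H)}=1$, so $\alpha(H)\in\tfrac12\Z$ for every $\alpha\in\Root_X$. Taking $H=e_j$ shows that the multiple above lies in $\tfrac12\Z$, so we may write $2\alpha=c\varepsilon_j$ resp.\ $2\alpha=c(\varepsilon_j\pm\varepsilon_k)$ with $c\in\Z_{>0}$, and the claim becomes $c\leq 2$ in the first case and $c=1$ in the second. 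The second fact is what I would borrow from Appendix~\ref{APPENDIX: unit lattice of compact sym space}: for every $\beta\in\Root_X$ with $2\beta\notin\Root_X$ one has $\tfrac12\beta^\vee\in\Gamma_X$.

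To finish, I compute the inverse roots using $\|e_i\|=\ell$: for $\alpha=\tfrac c2\varepsilon_j$ one gets $\alpha^\vee=\tfrac4c e_j$, for $\alpha=\tfrac c2(\varepsilon_j\pm\varepsilon_k)$ one gets $\alpha^\vee=\tfrac2c(e_j\pm e_k)$, and $(2\alpha)^\vee=\tfrac12\alpha^\vee$ in general. If $2\alpha\notin\Root_X$, apply the second fact to $\beta=\alpha$: then $\tfrac12\alpha^\vee\in\Gamma_X=\Z e_1\oplus\dots\oplus\Z e_r$, forcing $\tfrac2c\in\Z$ (so $c\in\{1,2\}$) in the $\varepsilon_j$-case and $\tfrac1c\in\Z$ (so $c=1$) in the $\varepsilon_j\pm\varepsilon_k$-cases. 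If $2\alpha\in\Root_X$, then $4\alpha\notin\Root_X$ (a root system contains no triple $\gamma,2\gamma,4\gamma$), so applying the second fact to $\beta=2\alpha$ gives $\tfrac14\alpha^\vee\in\Gamma_X$, which forces $c=1$, hence $2\alpha=\varepsilon_j$, in the $\varepsilon_j$-case and is impossible for $c\in\Z_{>0}$ in the $\varepsilon_j\pm\varepsilon_k$-cases. Together with the $\pm$ symmetry of $\Root_X$ this yields $2\Root_X\subseteq\{\pm\varepsilon_j,\pm2\varepsilon_j,\pm(\varepsilon_j\pm\varepsilon_k):1\leq j,k\leq r\}$.

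The step I expect to be the real obstacle is the fact borrowed from Appendix~\ref{APPENDIX: unit lattice of compact sym space}. The length bounds genuinely depend on the precise normalization tying $\Gamma_X$ to $\Root_X$ — without it, steps one and two would fail to exclude, for instance, a rank-one root system $\{\pm\tfrac32\varepsilon_1\}$ with $\Gamma_X=\Z e_1$ — and establishing that $\tfrac12\beta^\vee$ lies in $\Gamma_X$ for the relevant $\beta$, i.e.\ that the transvection along the associated root circle is a closed geodesic of the expected period, is the substantive input. By contrast, the direction statement is the soft combinatorics of signed permutations and the integrality statement follows from the elementary observation $\exp(H)\in K\Rightarrow\exp(2H)=e$.
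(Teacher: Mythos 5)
Your proof is correct, but it follows a genuinely different route from the paper's. The paper's own argument is a short arithmetic computation from the two lattice--root compatibility facts of Theorem \ref{THM: unit lattice sym space}: by (i), $2\alpha=\sum_j m_j\varepsilon_j$ with all $m_j\in\Z$; since the basis is cubic, $\tfrac12\alpha^\vee=\tfrac{2}{\sum_j m_j^2}\sum_k m_k e_k$, and (ii) ($\Root_X^\vee\subset 2\Gamma_X$) then forces $\tfrac{2m_k}{\sum_j m_j^2}\in\Z$ for every $k$, which by elementary arithmetic pins $2\alpha$ down to $\pm\varepsilon_j$, $\pm2\varepsilon_j$, $\pm(\varepsilon_j\pm\varepsilon_k)$ --- with no use of Proposition \ref{PROP: Weyl orbit}, of the Weyl group, or of root-system proportionality facts. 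You instead separate direction from length: the direction comes from the observation that $W_X$ acts by signed permutations of the cubic basis (Proposition \ref{PROP: Weyl orbit}), so each reflection $s_\alpha$ must negate one $e_j$ or (anti-)swap two of them; the length comes from the integrality $\alpha(\Gamma_X)\subset\tfrac12\Z$ (your re-derivation of (i)) together with a restricted form of (ii), applied to $\alpha$ or to $2\alpha$ according to whether $2\alpha$ is a root, using that $\Root_X$ is a possibly non-reduced root system in its span so that $\gamma$, $2\gamma$, $4\gamma$ cannot all be roots. Both arguments rest on the same substantive input, which you correctly single out: the normalization $\Root_X^\vee\subset 2\Gamma_X$, which Theorem \ref{THM: unit lattice sym space}(ii) supplies even without your restriction $2\beta\notin\Root_X$. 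The paper's combination of (i) and (ii) is shorter and needs neither Proposition \ref{PROP: Weyl orbit} nor the case split; your version buys a more geometric picture (roots seen through hyperoctahedral reflections) and gets by with the weaker form of (ii), at the cost of extra machinery.
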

\begin{proof}
 Let $\alpha\in\Root_X.$ Then
 $2\alpha=\sum\limits_{j=1}^r m_j\varepsilon_j$ for some $m_j\in\Z$ by (i) in Theorem \ref{THM: unit lattice sym space}.
Thus $\frac{1}{2}\alpha^\vee=\frac{2}{\sum\limits_{j=1}^r m_j^2}\sum\limits_{k=1}^r m_ke_k.$ By
(ii) in Theorem \ref{THM: unit lattice sym space} we get $\frac{2m_k}{\sum_{j=1}^r m_j^2}\in\Z$ for all $k=1,\dots, r$ and the
Lemma follows.
\end{proof}

\begin{proposition}\label{PROP: better basis for roots}
After suitable changes of signs for some of the $\varepsilon_i$ we have
$$\{\varepsilon_i -\varepsilon_j :\;  1\leq i < j\leq r\} \subset 2\Root_X.$$
\end{proposition}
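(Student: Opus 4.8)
The plan is to exploit the near-transitivity of $W_X$ on $\{\pm e_1,\dots,\pm e_r\}$ established in Proposition \ref{PROP: Weyl orbit} together with the restriction on roots from Proposition \ref{PROP: possible roots}, and to argue that at least one difference-type root $\varepsilon_i-\varepsilon_j$ (up to sign) must actually occur. First I would observe that by Proposition \ref{PROP: possible roots} every element $\alpha\in\Root_X$ has $2\alpha$ of one of the listed shapes, and correspondingly the reflection $s_\alpha$ permutes $\{\pm e_1,\dots,\pm e_r\}$ in a very restricted way: a root of type $\pm\varepsilon_j$ or $\pm 2\varepsilon_j$ gives $s_\alpha(e_j)=-e_j$ and fixes the other $e_k$, whereas a root of type $\pm(\varepsilon_i\pm\varepsilon_j)$ gives $s_\alpha$ swapping $\pm e_i\leftrightarrow\pm e_j$ (with a sign depending on the $\pm$ inside) and fixing the rest. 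Thus $W_X$, acting on the index set $\{1,\dots,r\}$ through its action on $\{\pm e_1,\dots,\pm e_r\}$, is generated by sign changes and transpositions — it is a subgroup of the hyperoctahedral group $(\Z/2)^r\rtimes S_r$.

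Next I would pin down the permutation part. Since $W_X$ acts with one orbit (case (I)) or two sign-paired orbits (case (II)) on $\{\pm e_1,\dots,\pm e_r\}$, the induced action on the unordered index set $\{1,\dots,r\}$ is transitive. For $r\ge 2$ this forces the presence of at least one transposition-type generator, i.e.\ a root whose reflection swaps two indices; and transitivity of the permutation part means the transpositions that occur generate a transitive subgroup of $S_r$. A transitive subgroup of $S_r$ generated by transpositions is all of $S_r$ (the "transposition graph" on $\{1,\dots,r\}$ is connected, hence spans all transpositions via the standard exchange argument). Therefore for every pair $i<j$ there is some $w\in W_X$ whose action on indices is the transposition $(i\,j)$ — but I must be careful, because $w$ may carry sign changes and need not itself be a root reflection. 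The cleaner route is: the subgroup of $W_X$ generated by the roots of difference type $\pm(\varepsilon_i\pm\varepsilon_j)$ already acts transitively on $\{1,\dots,r\}$ (the roots of type $\pm\varepsilon_j,\pm2\varepsilon_j$ only change signs and cannot merge orbits), so for each edge $\{i,j\}$ in the "difference graph" there is literally a root $\alpha$ with $2\alpha=\pm(\varepsilon_i\pm\varepsilon_j)$, and the difference graph is connected on $\{1,\dots,r\}$.

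Finally I would convert a connected spanning graph into the full claim by a sign-adjustment argument. Having a connected graph whose edges $\{i,j\}$ each carry a root $\pm(\varepsilon_i+\varepsilon_j)$ or $\pm(\varepsilon_i-\varepsilon_j)$, I would choose a spanning tree, root it, and inductively flip the sign of $\varepsilon_j$ (for $j\neq$ root) so that every tree edge becomes of the form $\varepsilon_i-\varepsilon_j$ with $i$ closer to the root; this is a standard 2-coloring/orientation argument and is consistent because a tree has no cycles. After these flips every tree edge gives $\varepsilon_i-\varepsilon_j\in 2\Root_X$ (up to the overall $\pm$, which is harmless since $\Root_X=-\Root_X$); and once the differences along a spanning tree are available, all differences $\varepsilon_i-\varepsilon_j$ lie in the $W_X$-orbit — indeed already in the additive/reflection closure — because the corresponding reflections generate the full symmetric group on indices, so $\varepsilon_i-\varepsilon_j$ is obtained from a tree-edge root by a suitable $w\in W_X$ permuting indices. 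I expect the main obstacle to be the bookkeeping in this last step: keeping track of how the internal signs $\pm(\varepsilon_i\pm\varepsilon_j)$ interact with the global sign-changes of the $\varepsilon$'s, and making sure the inductive sign choice over a spanning tree is well defined; the graph-connectivity input itself is the conceptual crux but is cheap once Propositions \ref{PROP: Weyl orbit} and \ref{PROP: possible roots} are in hand.
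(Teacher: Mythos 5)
Your proposal is correct and rests on exactly the same ingredients as the paper's proof: Proposition \ref{PROP: Weyl orbit} (transitivity of the induced action on the indices), the fact that $W_X$ is generated by reflections of the restricted shapes allowed by Proposition \ref{PROP: possible roots}, a sign normalization of the $\varepsilon_i$, and the $W_X$-invariance of $\Root_X$ applied through reflections in already-established difference roots. The paper packages these steps as a single induction that enlarges a set $I'$ with $\varepsilon_i-\varepsilon_j\in2\Root_X$ for all $i\neq j$ in $I'$ by one index at a time, which is precisely your connectivity--spanning-tree--$S_r$-orbit argument carried out greedily, so the two proofs are essentially the same.
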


\begin{proof}
Let $I=\{1,\dots ,r\}$. We show by induction on $k$ that for all $k\in I$ we have:
\begin{itemize}
\item[$(*)_k$] There exists $I'\subset I$ with $|I'|=k$ and $\varepsilon_i-\varepsilon_j\in 2\Root_X$ for all $i,j \in I'$, $i \neq j$, after a possible change of signs of some of the $\varepsilon_i$, $i \in I'.$
\end{itemize}
While $(*)_r$ proves the Lemma, $(*)_1$  is obvious. \\
Assume that $I'$ satisfies $(*)_k$ for some $k<r$. Then $\{\pm\varepsilon_i: i\in I'\}$ is not $W_X$-invariant according to Proposition \ref{PROP: Weyl orbit}. More precisely, there exists $\ell\in I'$ and $m\in I\setminus I'$ with $\varepsilon_\ell + \varepsilon_m\in 2\Root_X$ or  $\varepsilon_\ell - \varepsilon_m\in 2 \Root_X$ as $W_X$ is generated according to Lemma 3.2 by reflections of the form $s_{\varepsilon_i}$, $s_{\varepsilon_i +\varepsilon_j}$ or $s_{\varepsilon_i-\varepsilon_j}$, $i,j\in I$, $i \neq j$. Note that these generators only occur if the corresponding roots occur and that they leave $\{\pm\varepsilon_n\}$ invariant for all $n\in I$ except in case of $s_{\varepsilon_i \pm\varepsilon_j}$ which both interchange $\{\pm\varepsilon_i\}$ with $\{\pm\varepsilon_j\}$. After a possible change of sign of $\varepsilon_m$ we may assume $\varepsilon_\ell -\varepsilon_m\in 2 \Root_X$. But then also
$\varepsilon_i  -\varepsilon_m = s_{\varepsilon_i-\varepsilon_\ell}(\varepsilon_\ell-\varepsilon_m) \in 2\Root_X$ for all $i\in I'$. Thus $I'\cup \{m\}$ satisfies $(*)_{k+1}$  completing  the induction step.
\end{proof}

\begin{proposition}\label{PROP: root systems of rect unit lattice}
$\Root_X$ is one of the following classical root systems:
$$\begin{array}{lllr}
\Root_1&=&\frac{1}{2}\big\{\varepsilon_j-\varepsilon_k\, (1\leq j\neq k\leq r)\big\}& (A_{r-1}),\\
\Root_2&=&\frac{1}{2}\big\{\pm \varepsilon_j\, (1\leq j\leq r), \; \pm \varepsilon_j\pm\varepsilon_k\, (1\leq j<k\leq r)\big\}&
 (B_{r})
,\\
\Root_3&=&\frac{1}{2}\big\{\pm 2\varepsilon_j\, (1\leq j\leq r), \;  \pm \varepsilon_j\pm\varepsilon_k\, (1\leq j< k\leq r)\big\}
&(C_r),\\
\Root_4&=&\frac{1}{2}\big\{\pm \varepsilon_j\pm\varepsilon_k\, (1\leq j< k\leq r)\big\}&(D_r),\\
\Root_5&=&\frac{1}{2}\big\{\pm\varepsilon_j\, (1\leq j\leq r), \;\pm 2\varepsilon_j\, (1\leq j\leq r), \;  \pm \varepsilon_j\pm\varepsilon_k\, (1\leq j< k\leq r)\big\}& (BC_r).
 \end{array}$$
\end{proposition}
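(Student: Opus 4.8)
The plan is to read off $\Root_X$ from Propositions~\ref{PROP: possible roots} and~\ref{PROP: better basis for roots} together with the action of $W_X$. After the sign changes provided by Proposition~\ref{PROP: better basis for roots} (which leave all hypotheses intact, as they only replace some $e_i$ by $-e_i$), every \emph{difference root} $\tfrac12(\varepsilon_i-\varepsilon_j)$ with $i\neq j$ belongs to $\Root_X$, while by Proposition~\ref{PROP: possible roots} the only further elements $\Root_X$ could contain are \emph{sum roots} $\tfrac12(\pm\varepsilon_i\pm\varepsilon_j)$ with $i<j$, \emph{short roots} $\pm\tfrac12\varepsilon_j$, and \emph{long roots} $\pm\varepsilon_j=\tfrac12(\pm2\varepsilon_j)$. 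So it suffices to decide, for each of these three families separately, whether it is contained in $\Root_X$ in full or not at all, and to check that no other roots can occur.

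First I would prove this all-or-nothing behaviour. Since the difference roots are present, every $s_{\frac12(\varepsilon_i-\varepsilon_j)}$ lies in $W_X$, and on $\lieA^*$ it interchanges $\varepsilon_i$ with $\varepsilon_j$ and fixes the remaining $\varepsilon_k$ (the dual basis being orthogonal); hence $W_X$ contains the symmetric group $\Sym_r$ permuting $\{\varepsilon_1,\dots,\varepsilon_r\}$. Because $W_X$ preserves $\Root_X$, the occurrence of a short root, of a long root, or of a sum root does not depend on the index, respectively on the pair. Next I would note the implications ``a short or long root occurs $\Rightarrow$ a sum root occurs'': for $r\geq2$, if $\tfrac12\varepsilon_1\in\Root_X$ or $\varepsilon_1\in\Root_X$ then the reflection $s_{\frac12\varepsilon_1}=s_{\varepsilon_1}$ lies in $W_X$, it sends $\varepsilon_1\mapsto-\varepsilon_1$ and fixes $\varepsilon_2$, and therefore carries the root $\tfrac12(\varepsilon_1-\varepsilon_2)$ to $-\tfrac12(\varepsilon_1+\varepsilon_2)\in\Root_X$.

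It then remains to carry out the bookkeeping. For $r\geq2$ the three families are controlled by three yes/no choices subject only to the two implications above, which leaves exactly five possibilities; together with the (always present) difference roots these are precisely $\Root_1=A_{r-1}$ (no sum, short, or long root), $\Root_4=D_r$ (only sum roots), $\Root_2=B_r$ (sum and short roots), $\Root_3=C_r$ (sum and long roots) and $\Root_5=BC_r$ (all three). In each case $\Root_X$ equals the indicated set rather than a proper subsystem, because the entire $\Sym_r$-orbit of each present family, together with all difference roots, is already forced while Proposition~\ref{PROP: possible roots} permits nothing beyond these four families. The rank-one case $r=1$ follows from the same argument with the difference and sum families empty and the two implications vacuous, so that $\Root_X$ is one of $\emptyset$, $\{\pm\tfrac12\varepsilon_1\}$, $\{\pm\varepsilon_1\}$, $\{\pm\tfrac12\varepsilon_1,\pm\varepsilon_1\}$, i.e.\ $\Root_1=\Root_4$ ($A_0=D_1$), $\Root_2$ ($B_1$), $\Root_3$ ($C_1$) or $\Root_5$ ($BC_1$). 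The only point requiring care --- bookkeeping rather than a genuine obstacle --- is keeping the $W_X$-action on the lattice in $\lieA$ (where Proposition~\ref{PROP: Weyl orbit} is stated) consistent with its dual action on $\lieA^*$, and verifying that once the three choices are fixed no extra roots are created.
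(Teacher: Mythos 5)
Your proposal is correct and follows essentially the same route as the paper: all difference roots come from Proposition \ref{PROP: better basis for roots}, Proposition \ref{PROP: possible roots} limits the remaining candidates, the reflections $s_{\frac12(\varepsilon_i-\varepsilon_j)}$ give the permutation action making each family all-or-nothing (justified because the basis is cubic by Proposition \ref{PROP: Weyl orbit}), and the reflections $s_{\varepsilon_j}$ force the sum roots whenever a short or long root is present. The paper phrases this as a two-case distinction (some $\varepsilon_j$ or $2\varepsilon_j$ occurs, or none does) rather than your three binary choices with two implications, but the underlying argument is identical.
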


Except for the first case
$\Root_X$ is a root systems in $\lieA^*$ and $X$ is of compact type while
in the first case $\Root_X$ is a root system in the codimension-one subspace
$\big\{\alpha\in\lieA^*:\; \alpha(\sum_{j=1}^n e_j)=0\big\}$ of $\lieA^*$ and $X$ splits off locally an $S^1$-factor.
Except for $D_2$, $\Root_X$ is an irreducible root system in its span.

\begin{proof}
By Proposition \ref{PROP: better basis for roots} we may assume that $\varepsilon_j-\varepsilon_k\in 2\Root_X$ for all
$j,k\in I = \{1,\dots,r\}$ with $j\neq k$.
Now we distinguish two cases:
\begin{enumerate}[(i)]
\item There exists $j\in I$ with $\{\varepsilon_j, 2\varepsilon_j\}\cap 2\Root_X\neq\emptyset:$\\
If $\varepsilon_j\in2\Root_X$ or $2\varepsilon_j\in2\Root_X$ for one (and hence all)
$j\in I$ then  the reflections $s_{e_j}\in W_X$ allow us  to change the signs
of the $\varepsilon_j$ individually. Thus all $\varepsilon_j\pm\varepsilon_k\in2\Root_X$.
This yields the root systems of type $B_r$, $C_r$ and $BC_r$ for $r\geq 1$ depending on whether $\varepsilon_j$ or $2\varepsilon_j$ or both are contained in $2\Root_X$.
\item For all $j\in I$ we have $\{\varepsilon_j, 2\varepsilon_j\}\cap 2\Root_X=\emptyset:$\\
Then either $2\Root_X = A_{r-1} = \{\pm(\varepsilon_j-\varepsilon_k): j,k\in I, \,j\neq k\}$
or some (hence all) $\varepsilon_j+\varepsilon_k\in2\Root_X$ and $2\Root_X = D_r =
\{\pm(\varepsilon_j\pm\varepsilon_k): j,k\in I, \,j\neq k\}$.
\end{enumerate}
\end{proof}

\begin{note}The root systems of irreducible hermitian symmetric spaces of compact type ($C_r$ and $BC_r$) are determined in
\cite[Thm.\ 2, p.\ 362]{Moore}. A list of  root systems (and fundamental groups) of irreducible symmetric $R$-spaces
can be found in \cite[p.\ 305]{Take-84} which is based on an extensive study of symmetric $R$-spaces in \cite{Take-65} .
\end{note}

A \emph{Euclidean root datum} is a triple $(V,\Gamma,\Root)$ consisting of a Euclidean vector space $V,$ a full lattice $\Gamma$ in $V$ and a root system $\Root$ in a subspace of $V^*$ such that $\Gamma_0(\Root)\subset\Gamma\subset\Gamma_1(\Root)$ where $2\Gamma_0(\Root)$ is the $\Z$-span of $\Root^\vee$ and $2\Gamma_1(\Root)=\{v\in V: \alpha(v)\in\Z\; \text{for all}\; \alpha\in\Root\}.$ To each Riemannian symmetric space $Y$ one can associate a Euclidean root datum (up to isomorphism) by taking $V=\lieA$, $\Gamma=\Gamma_Y$ and $\Root=\Root_Y$
(see Appendix \ref{APPENDIX: unit lattice of compact sym space}).\par
Let $(V_r,\Gamma_L,\Root_j),\; j=1,\dots, 5,$ be the Euclidean root datum with $V_r$ an $r$-dimensional Euclidean vector space, $\Gamma_L$ a lattice
in $V_r$ spanned by an orthogonal basis $e_1,\dots, e_r$ of common length $L>0$ and $\Root_i$ one of the five root systems explicitly described in Proposition \ref{PROP: root systems of rect unit lattice} in terms of the dual basis $\varepsilon_1,\dots,\varepsilon_r$ of $e_1,\dots, e_r.$ We say that $X$ is of type $\widehat{A}_{r-1}$, $\widehat{B}_{r}$,
 $\widehat{C}_{r}$,  $\widehat{D}_{r}$ or $\widehat{BC}_{r}$ if its Euclidean root datum is up to scaling (that is up to a choice of $L$) isomorphic
 to $(V_r,\Gamma_L,\Root_j)$ with $\Root_j$ as in Proposition \ref{PROP: root systems of rect unit lattice} and of the corresponding type.

 \begin{theorem}\label{THM: possible root datum types}
  $X$ is of type $\widehat{A}_{r-1}$, $\widehat{B}_{r}$,
 $\widehat{C}_{r}$,  $\widehat{D}_{r}$ or $\widehat{BC}_{r}$ with $r\geq 2$ in case $\widehat{D}_{r}$ and $r\geq 1$ otherwise.
 The type of $X$ is unique and each type occurs. Moreover,
 $$\pi_1(X)\cong\begin{cases}
 \{1\}& \text{if $X$ is of type $\widehat{C}_r$ or $\widehat{BC}_r$}\\
 \Z_2  &  \text{if $X$ is of type $\widehat{B}_r$ or $\widehat{D}_r$}\\
 \Z  & \text{if $X$ is of type $\widehat{A}_{r-1}$}
 \end{cases}.$$
 \end{theorem}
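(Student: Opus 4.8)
The triple $(\lieA,\Gamma_X,\Root_X)$, and therefore the type of $X$, is well defined up to isomorphism of Euclidean root data, and the plan is to establish in turn: (a) some type is realized; (b) the stated formula for $\pi_1(X)$; (c) the type is unique; (d) every type occurs. Points (c) and (d) are linked to (b) because $|\pi_1(X)|$ is an invariant of the Euclidean root datum. Part (a) is immediate: by Proposition~\ref{PROP: Weyl orbit} the orthogonal basis $e_1,\dots,e_r$ of $\Gamma_X$ may be chosen cubic, of common length $L$, and by Proposition~\ref{PROP: root systems of rect unit lattice} the set $\Root_X$, written in the dual basis $\varepsilon_1,\dots,\varepsilon_r$, equals one of $\Root_1,\dots,\Root_5$; hence $(\lieA,\Gamma_X,\Root_X)$ is isomorphic, up to the scaling $L$, to some $(V_r,\Gamma_L,\Root_j)$. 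The restriction $r\ge 2$ for type $\widehat{D}_r$ is forced: for $r=1$ the set $\Root_4$ is empty, and the datum then coincides with that of $\widehat{A}_0$ (a circle); for the other four types $r=1$ is admissible.

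For (b) I would use the description of unit lattices from Appendix~\ref{APPENDIX: unit lattice of compact sym space} (in particular Theorem~\ref{THM: unit lattice sym space}): the lattice $\Gamma_0(\Root_X)$, which is half the $\Z$-span of $\Root_X^\vee$, is the unit lattice of the universal cover of $X$, and $\pi_1(X)\cong\Gamma_X/\Gamma_0(\Root_X)$ --- in the case $\widehat{A}_{r-1}$ the universal cover carries a one-dimensional flat factor, which contributes the zero lattice, so the isomorphism persists with $\Gamma_0(\Root_X)\subset\lieA_1$. Identifying $\lieA^*$ with $\lieA$ via the inner product gives $\varepsilon_i=L^{-2}e_i$, so that a root $\frac12\sum_i m_i\varepsilon_i$ $(m_i\in\Z)$ has coroot $\alpha^\vee=\frac{4}{\sum_i m_i^2}\sum_i m_ie_i$; evaluating this on the roots of Proposition~\ref{PROP: root systems of rect unit lattice} one finds that the $\Z$-span of $\Root_X^\vee$ equals twice the $\Z$-span of $\{e_i-e_j\}$ for $\widehat{A}_{r-1}$; twice the lattice $\{\sum_i a_ie_i : a_i\in\Z,\ \sum_i a_i\ \text{even}\}$ for both $\widehat{B}_r$ and $\widehat{D}_r$ (the additional coroots $4e_i$ occurring in case $\widehat{B}_r$ already lie in it); and twice $\Gamma_X$ itself for both $\widehat{C}_r$ and $\widehat{BC}_r$ (each of which has a long root with coroot $2e_i$). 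Halving these lattices and quotienting $\Gamma_X=\Z\text{-span}\{e_1,\dots,e_r\}$ --- for $\widehat{A}_{r-1}$ through the surjection $\sum_i a_ie_i\mapsto\sum_i a_i$ whose kernel is precisely $\Gamma_0(\Root_X)$, otherwise by the parity of $\sum_i a_i$ or trivially --- yields $\pi_1(X)\cong\Z,\ \Z_2,\ \Z_2,\ \{1\},\ \{1\}$ in the cases $\widehat{A}_{r-1},\widehat{B}_r,\widehat{D}_r,\widehat{C}_r,\widehat{BC}_r$, which is the asserted formula.

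For (c), suppose $(\lieA,\Gamma_X,\Root_X)$ is isomorphic both to $(V_r,\Gamma_L,\Root_j)$ and to $(V_r,\Gamma_{L'},\Root_{j'})$; composing yields a conformal linear self-map of $V_r$ carrying $\Gamma_L$ onto $\Gamma_{L'}$, and since the shortest nonzero vectors of a rectangular lattice are exactly $\pm$ its cubic basis, this map --- and its induced action on $V_r^*$ --- is a positive multiple of a signed permutation. It therefore suffices to verify that $\Root_1,\dots,\Root_5$ are pairwise inequivalent under scaled signed permutations, which I would do as follows: $\Root_1$ is the unique one that does not span $V_r^*$ (equivalently, the unique one for which $X$ is not of compact type); $\Root_5$ is the unique non-reduced one; among the reduced systems that span $V_r^*$, the system $\Root_4$ has strictly fewer roots than $\Root_2$ or $\Root_3$; and $\Root_2$ and $\Root_3$, which have the same number of roots, are separated for $r\ge 3$ by their differing numbers of longest roots (a conformal map preserves this count), and for $r=2$ --- where $B_2\cong C_2$ as abstract root systems --- by the root-datum invariant $\pi_1$, which by (b) is $\Z_2$ for $\widehat{B}_2$ but trivial for $\widehat{C}_2$. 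The same two invariants take care of the rank-one coincidence $A_1\cong B_1\cong C_1$.

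For (d), the unitary group $\LieU(r)$ has type $\widehat{A}_{r-1}$, the Lagrangian Grassmannian $\LieSp(r)/\LieU(r)$ has type $\widehat{C}_r$, the complex Grassmannian $\mathrm{Gr}_r(\C^{2r+1})$ has type $\widehat{BC}_r$, the real Grassmannian $\mathrm{Gr}_r(\R^{2r+1})$ has type $\widehat{B}_r$, and the real Grassmannian $\mathrm{Gr}_r(\R^{2r})$ with $r\ge 2$ has type $\widehat{D}_r$; all of these are indecomposable symmetric $R$-spaces, hence have rectangular unit lattice (for instance by Theorem~\ref{THM: Clifford type Extr Sym}), and their classical restricted root systems and fundamental groups agree with the table above. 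The main obstacle is exactly this low-rank accounting in (c): the abstract root system of $X$ does not by itself determine the type precisely when classical root systems coincide --- chiefly $B_2\cong C_2$, and $A_1\cong B_1\cong C_1$ in rank one --- so one must fall back on the finer information stored in the lattice, conveniently repackaged as $\pi_1(X)$. A secondary point to watch is keeping the coroot normalization in (b) consistent with the $2\pi$-convention built into the definition of $\Root_X$.
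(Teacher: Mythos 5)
Your proposal is correct and follows essentially the same route as the paper: the possible types come from Propositions \ref{PROP: Weyl orbit} and \ref{PROP: root systems of rect unit lattice}, the fundamental groups from $\pi_1(X)\cong\Gamma_X/\Gamma_0$ (Theorem \ref{THM: unit lattice sym space}), uniqueness by distinguishing the five data through their root systems and fundamental groups, and existence by explicit examples. You merely carry out in detail the coroot-lattice computation the paper calls a ``simple calculation'' and substitute symmetric $R$-space examples such as $\mathrm{Gr}_r(\R^{2r+1})$, $\mathrm{Gr}_r(\R^{2r})$ and $\LieSp_r/\LieU_r$ for the paper's group manifolds $\LieSO_{2r+1}$, $\LieSO_{2r}$ and $\LieSp_r$, which is equally valid.
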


\begin{proof} It follows from Propositions \ref{PROP: Weyl orbit} and \ref{PROP: root systems of rect unit lattice} that the Euclidean root datum of $X$ is isomorphic to $(V_r,\Gamma_L,\Root_j)$ for some $L>0$ and some $j=1,\dots, 5.$ Thus the type of $X$ is one of the above. Note that the types $\widehat{A}_{0}$ and $\widehat{D}_1$ coincide as $\Root_j=\emptyset$ in both cases. The examples
$\LieU_r,$ $\LieSO_{2r+1}$, $\LieSp_r$, $\LieSO_{2r}$ and $G_r(\C^{2r+1})$ show that all types occur. Since
$\pi_1(X)\cong\Gamma_X/\Gamma_0$ by Theorem \ref{THM: unit lattice sym space}, the statement on the fundamental groups follows by a simple calculation. Therefore the five type can be distinguished by their root systems and fundamental groups implying uniqueness.
\end{proof}

\begin{remark}
 Let $X$ be an indecomposable symmetric space with rectangular unit lattice. Then Case (II) occurs in
 Proposition \ref{PROP: Weyl orbit} if and only if $X$ is of type $\widehat{A}_{r-1}.$
\end{remark}

Let $Y$ be a symmetric space  and let $r$ be a positive integer. A submanifold $S$ of $Y$ is called \emph{polysphere}
of rank $r,$
 if $S$ is totally geodesic and isometric to a Riemannian product of $r$ round  spheres of dimension at least $2.$

\begin{theorem}[Polysphere theorem]
\label{THM: Polysphere}
Let $Y$ be a  compact symmetric space of rank $r\geq 1.$  Then the following statements are equivalent:
\begin{enumerate}[(i)]
 \item $Y$ contains a polysphere of rank $r.$
 \item $Y$ is simply connected and has rectangular unit lattice.
\end{enumerate}
\end{theorem}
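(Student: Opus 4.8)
The plan is to prove the two implications separately; (i)$\Rightarrow$(ii) will be essentially formal, while (ii)$\Rightarrow$(i) will use the classification of Euclidean root data (Theorem~\ref{THM: possible root datum types}, Proposition~\ref{PROP: root systems of rect unit lattice}) together with an explicit construction of the polysphere from a maximal set of pairwise strongly orthogonal roots, and the identity $\pi_1(X)\cong\Gamma_X/\Gamma_0$ of Theorem~\ref{THM: unit lattice sym space} throughout.

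For (i)$\Rightarrow$(ii) I would choose the base point $p$ of $Y$ on a totally geodesic polysphere $S=S_1\times\dots\times S_r\subset Y$, with $S_j$ a round sphere of dimension $n_j\geq 2$, and take $F_Y$ to be a maximal flat of $S$ through $p$; as $\dim F_Y=r=\rk Y$ it is maximal also in $Y$, so $\lieA_S=\lieA_Y=:\lieA$. Since $S$ is totally geodesic, $\Exp_p^S$ is the restriction of $\Exp_p^Y$, whence $\Gamma_S=\Gamma_Y$; and $\Gamma_S$ is the orthogonal direct sum of the rank-one lattices $\Gamma_{S_j}$, so $\Gamma_Y$ is rectangular. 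For simple connectedness I would use that the transvection algebra of $S$ embeds $\ad(\lieA)$-equivariantly into that of $Y$ (because $\lieP_S\subset\lieP_Y$ and $[\lieP_S,\lieP_S]\subset[\lieP_Y,\lieP_Y]$), so $\Root_S\subset\Root_Y$ and hence $\Gamma_0(\Root_S)\subset\Gamma_0(\Root_Y)$; as $\pi_1(S)=\prod_j\pi_1(S^{n_j})=\{1\}$, Theorem~\ref{THM: unit lattice sym space} gives $\Gamma_S=\Gamma_0(\Root_S)$, and the sandwich $\Gamma_0(\Root_Y)\subset\Gamma_Y=\Gamma_S=\Gamma_0(\Root_S)\subset\Gamma_0(\Root_Y)$ forces $\Gamma_Y=\Gamma_0(\Root_Y)$, i.e.\ $\pi_1(Y)=\{1\}$.

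For (ii)$\Rightarrow$(i) I would first pass to the de Rham decomposition $Y=Y_1\times\dots\times Y_s$, noting that each factor is indecomposable, simply connected and has rectangular unit lattice, and that a product of polyspheres of ranks $\rk Y_i$ is a polysphere of rank $r$; so one may assume $Y$ indecomposable. By Theorem~\ref{THM: possible root datum types} the type of $Y$ is one of $\widehat A_{r-1},\widehat B_r,\widehat C_r,\widehat D_r,\widehat{BC}_r$, and simple connectedness (inspecting the listed fundamental groups) leaves only $\widehat C_r$ and $\widehat{BC}_r$; by Proposition~\ref{PROP: root systems of rect unit lattice} there is then a cubic basis $e_1,\dots,e_r$ of $\Gamma_Y$, with dual basis $\varepsilon_1,\dots,\varepsilon_r$, such that each $\varepsilon_j$ is a root while $2\varepsilon_j$ and all $\varepsilon_j\pm\varepsilon_k$ $(j\neq k)$ are not. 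Put $\gamma_j:=\varepsilon_j$; these are $r$ pairwise strongly orthogonal roots (no $\gamma_j\pm\gamma_k$ is a root) with $\gamma_j^\vee=2e_j$ and $\lieG_{2\gamma_j}=\{0\}$. Writing $\lieP_\gamma:=(\lieG_\gamma\oplus\lieG_{-\gamma})\cap\lieP$, I would pick $0\neq v_j\in\lieP_{\gamma_j}$; because $2\gamma_j$ is not a root, $\lieL_j:=\R e_j\oplus\R v_j\oplus\R[e_j,v_j]$ is a $\sigma$-stable subalgebra of $\lieG$ isomorphic to $\lieSU_2$, contained in $\lieG_0\oplus\lieG_{\gamma_j}\oplus\lieG_{-\gamma_j}$, with $\lieL_j\cap\lieA=\R e_j$ and $\lieL_j\cap\lieP=\R e_j\oplus\R v_j$. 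Since $\gamma_k(e_j)=\delta_{jk}$ and the $\gamma_j$ are strongly orthogonal, the $\lieL_j$ pairwise commute, so $\lieL:=\bigoplus_j\lieL_j$ is a $\sigma$-stable subalgebra and $\lieP':=\lieL\cap\lieP=\bigoplus_j(\lieL_j\cap\lieP)\supset\lieA$ is a Lie triple system; hence $S:=\Exp_p(\lieP')$ is totally geodesic and locally a product $S_1\times\dots\times S_r$ with $S_j:=\Exp_p(\lieL_j\cap\lieP)$ a two-dimensional rank-one space, that is $S^2$ or $\R P^2$.

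The point I expect to require the most care, besides the classification step just used, is to see that each $S_j$ is genuinely $S^2$ (not $\R P^2$) and that the product decomposition of $S$ is global. Here rectangularity of $\Gamma_Y$ re-enters: $\Gamma_{S_j}=\Gamma_Y\cap\R e_j=\Z e_j$, while $\Root_{S_j}^\vee=\{\pm 2e_j\}$ forces $\Gamma_0(\Root_{S_j})=\Z e_j$, so $\pi_1(S_j)=\Gamma_{S_j}/\Gamma_0(\Root_{S_j})=\{1\}$ by Theorem~\ref{THM: unit lattice sym space} and $S_j\cong S^2$. Then $\prod_jS_j$ is simply connected, it covers $S$, and $\pi_1(S)=\Gamma_Y/\Gamma_0(\Root_S)=\{1\}$ because $\Root_S=\{\pm\gamma_j:1\le j\le r\}$ gives $\Gamma_0(\Root_S)=\Gamma_Y=\Gamma_S$; so the covering is an isometry and $S\cong S^2\times\dots\times S^2$ is a polysphere of rank $r$ in $Y$. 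By contrast, (i)$\Rightarrow$(ii) is soft once $\Root_S\subset\Root_Y$ and the formula $\pi_1=\Gamma/\Gamma_0$ are in hand.
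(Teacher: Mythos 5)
Your proposal is correct, and for the substantial implication (ii)$\Rightarrow$(i) it follows essentially the paper's route: reduce to the indecomposable case, use Theorem~\ref{THM: possible root datum types} to see that $Y$ must be of type $\widehat{C}_r$ or $\widehat{BC}_r$, note that $\varepsilon_1,\dots,\varepsilon_r$ are strongly orthogonal roots with $2\varepsilon_j\notin\Root_Y$, build from the root spaces a $\sigma$-stable subalgebra containing $\lieA$ whose corresponding totally geodesic submanifold $S$ has $\Root_S=\{\pm\varepsilon_j\}$ and $\Gamma_S=\Gamma_Y$, and conclude $\pi_1(S)=\Gamma_S/\Gamma_0(\Root_S)=\{1\}$ from Theorem~\ref{THM: unit lattice sym space}. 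The differences are minor but worth recording. In the construction you take a single vector $v_j$ in each $(\lieG_{\varepsilon_j}+\lieG_{-\varepsilon_j})\cap\lieP$, so your factors are $2$-spheres, whereas the paper takes the whole subtriple $\R H_{\varepsilon_j}+(\lieG_{\varepsilon_j}+\lieG_{-\varepsilon_j})\cap\lieP$ and gets Helgason spheres of maximal dimension; both yield a rank-$r$ polysphere. Do spell out the closure step $[v_j,[e_j,v_j]]\in\R e_j$: the hypothesis $2\gamma_j\notin\Root_Y$ only places this bracket in $\lieG_0\cap\lieP=\lieA$, and landing in $\R e_j$ follows from invariance of the inner product, $\langle[v_j,[e_j,v_j]],H\rangle=\gamma_j(H)\,\|[e_j,v_j]\|^2$ for $H\in\lieA$ (this is the analogue of the orthogonality remark in the paper's verification of its subtriple). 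For (i)$\Rightarrow$(ii) your rectangularity argument coincides with the paper's, but your simple connectedness argument is genuinely different: the paper argues softly (every closed curve in $Y$ is freely homotopic to a closed geodesic in the maximal torus $T\subset S$, hence contractible in the simply connected $S$), while you use $\Root_S\subset\Root_Y$ for the common maximal flat (with the same coroots, since the induced metric on $\lieA$ agrees) and the sandwich $\Gamma_0(\Root_Y)\subset\Gamma_Y=\Gamma_S=\Gamma_0(\Root_S)\subset\Gamma_0(\Root_Y)$ together with $\pi_1\cong\Gamma/\Gamma_0$; this is valid, more algebraic, and avoids invoking the representation of free homotopy classes by closed geodesics in a maximal torus, at the cost of the root-system inclusion argument.
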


\begin{proof}Assuming (i)
let $S$ be a polysphere of rank $r$ in $Y.$ Then the maximal torus $T$ of $S,$ which is a Riemannian product of circles, is also a maximal torus in $Y.$ Thus $Y$ has rectangular unit lattice. Now every closed curve in $Y$ is homotopic to a closed geodesic $\gamma$ in $T.$ Since $S$ is simply connected (being a product of spheres of dimension at least $2$), $\gamma$  is contractible in $S$ and hence in $Y.$
Thus $Y$ is simply connected.\par
If (ii) holds we may further assume that $Y$ is indecomposable. Then
$Y$ is of type $\widehat{C}_r$ or $\widehat{BC}_r$ by Theorem
\ref{THM: possible root datum types}.
In both cases the roots $\varepsilon_1,\dots, \varepsilon_r$ are strongly orthogonal, that
is  $\varepsilon_j\pm\varepsilon_k\notin R_Y\cup\{0\}$ for $j\neq k.$ Let $j\in\{1,\dots, r\}.$
Since in both cases
$2\varepsilon_j\notin\Root_Y$ we get  a
Lie subtriple $\lieS_j=\R\cdot H_{\varepsilon_j} + (\lieG_{\varepsilon_j}+\lieG_{-\varepsilon_j})\cap \lieP$ of $\lieP.$
Indeed, $[\lieS_j,\lieS_j]\subset \lieK_0\oplus (\lieG_{\varepsilon_j}+\lieG_{-\varepsilon_j})\cap \lieK$, $\lieK_0$ the centralizer of $\lieA$ in $\lieK,$ and $[\lieG_{\pm \varepsilon_j},\lieG_{\pm\varepsilon_j}]$ is orthogonal to all $H\in\lieA$ with $H\perp H_{\varepsilon_j}$.
The corresponding totally geodesic submanifold $S_j$ of $Y$ has non-vanishing constant sectional curvature and is therefore covered by a simply connected sphere.
Strong orthogonality implies that $\lieS_j$ and $\lieS_k$
are perpendicular  for $j\neq k$ and that $\lieS=\bigoplus\limits_{j=1}^r\lieS_j$ is again a Lie subtriple of $\lieP.$
The totally geodesic submanifold $S$
of $Y$ corresponding to $\lieS$
is covered by a product of $r$ simply connected spheres.
The root system $\Root_S=\{\pm\varepsilon_1,\dots, \pm\varepsilon_r\}$ of $S$ is a root subsystem of
$\Root_Y$ and the unit lattice of $S$ is $\Gamma_S=
\mathrm{span}_{\Z}\{e_1,\dots, e_r\}=\mathrm{span}_{\Z}\left\{\frac{1}{2}\varepsilon_1^\vee,\dots, \frac{1}{2}\varepsilon_r^\vee\right\}.$
By Theorem \ref{THM: unit lattice sym space} $S$ is simply connected and hence a polysphere of rank $r.$
\end{proof}

\begin{remark}
We see slightly more: The polyspheres in  Theorem \ref{THM: Polysphere} are actually `poly-Helgason-spheres',
that is they are products of maximal dimensional totally geodesic spheres  whose sectional curvature
realizes
the maximum of the sectional curvatures of $Y$
(see \cite{Helg-66}).\par
Theorem \ref{THM: Polysphere}  generalizes the `polysphere theorem' for hermitian symmetric spaces of compact type
(see e.g.\ \cite[p.\ 280]{Wolf-Hermitian}).
Note that not all simply connected Riemannian symmetric spaces with rectangular unit lattice are hermitian, e.\ g.\ the
compact symplectic groups $\LieSp_n,\;n\geq 2,$ the quaternionic Grassmannians or the Cayley plane.
\end{remark}

\section{Extrinsically Symmetric Embeddings}
\label{SECT: rectangular unit lattice embedding}

In this section we show Theorem \ref{THM: A}. As explained in the introduction, it is closely related to a result of \textsc{Loos} in \cite{LoosCubic}, but our arguments and our methods are entirely different. Let $X$ be a Riemannian symmetric space with rectangular unit lattice.

\begin{proof}[\indent\textsc{Proof of the existence statement in Theorem \ref{THM: A}}]
We may assume that $X$ is indecomposable and has positive dimension.
By Proposition \ref{PROP: Weyl orbit} we can choose an orthogonal basis $B=\{e_1,\dots, e_r\}$ of $\Gamma_X$
such that
\begin{enumerate}[(I)]
 \item $W_X(e_1)=B\cup(-B)$ if $X$ is of compact type, or
 \item $W_X(e_1)=B$ if $X$ splits off a local $\Sphere^1$-factor.
\end{enumerate}
We choose  a Weyl chamber $\lieA_+\subset\lieA$ of $X$ containing
$e_1$ in its closure. As before we extend $\lieA$ to a maximal abelian subalgebra $\lieT$ of $\lieG$ and
choose a Weyl chamber in $\lieT$ containing $\lieA_+$ in its closure. This induces a partial ordering on
$\lieT^*.$
Let $\{\varepsilon_1,\dots,\varepsilon_r\}\subset\lieA^*\subset\lieT^*$ be the basis dual to $B,$ that is
$\varepsilon_j(e_k)=0$ if $j\neq k$ and $\varepsilon_j(e_j)=1.$\par

By the Theorem of Cartan--Helgason--Takeuchi, Theorem \ref{THM: Cartan Helgason}, there exists a $K$-spherical $G$-module
$V$ with highest weight $\varepsilon_1,$ which is unique up to equivalence.
We now choose an element $v^o\in V^K\setminus\{0\}.$ This choice is unique up to a complex factor, since
$V^K$ has dimension one. We write
$v^o$  according to the weight space decomposition of $V$ (see Equation \ref{EQ: Weight space decomposition}) as
\begin{equation}
 \label{EQ: v expressed in weight vectors}
 v^o=\sum\limits_{\omega\in \Omega_V}v^o_{\omega}=\sum\limits_{\mu\in M}v^o_{\mu}
\end{equation}
with $v^o_{\mu}\in V_{\mu}$ and $M=\{\mu\in\Omega_V: v^o_{\mu}\neq 0\},$  where $\Omega_V$ is the set of weights for $V$ (see Appendix
\ref{SEC: Cartan-Helgason-Thm}).
From $K.v^o=\{v^o\}$ we see that $M$ is $W_X$-invariant and that $(K\cap\exp(\lieT)).v^o_{\mu}=v^o_{\mu}$ and
therefore $\mu(\lieT\cap\lieK)=0$ and $\mu(\Gamma_X)\subset \Z$
for all $\mu\in M.$ This shows that $M\subset \Gamma_X^*=\mathrm{span}_{\Z}(\varepsilon_1,\dots,\varepsilon_r).$
Since $\varepsilon_1$ is the highest weight of $V,$ we get
$M\setminus\{0\}\subset\{\pm \varepsilon_1,\dots, \pm\varepsilon_r\}.$
Now $\varepsilon_1\in M,$ since the projection of the highest weight space $V_{\varepsilon_1}$ onto $V^K$ is non-zero (see \cite[Lem.\ 3, p.\ 92]{TakeBook}).
As $M\setminus\{0\}$ is $W_X$-invariant we have
\begin{equation}\label{EQ: M minus 0}
\begin{array}{lllll}
 M\setminus\{0\}&=&\{\pm \varepsilon_1,\dots, \pm\varepsilon_r\} & \text{in Case (I)} & \text{or}\\
 M\setminus\{0\}&=&\{\varepsilon_1,\dots, \varepsilon_r\}& \text{in Case (II).} &
\end{array}
\end{equation}\par

We now consider the $G$-equivariant (and therefore smooth) map
\begin{equation}\label{EQ: embedding phi}
\Phi: X\to E:=\mathrm{span}_{\R}(G.v^o),\quad g.p\mapsto g.v^o\; .
\end{equation}
Let $(\cdot ,\cdot)_{\C}$ be a $G$-invariant hermitian inner product on $V,$ then the $G$-invariant real
inner product $\mathrm{Re}\big((\cdot ,\cdot)_{\C}\big)$ makes
$E$ a real Euclidean $G$-module.\par
For any $H\in\lieA$ we have
\begin{equation}\label{EQ: geodesics under Phi}
 \Phi(\exp(H).p)=\exp(H).v^o=
v^o_0+\sum\limits_{\mu\in M\setminus\{0\}}e^{2\pi i\mu(H)}v^o_{\mu}
\end{equation}
and by
Equation \eqref{EQ: M minus 0}
$$
\begin{array}{lllll}\Phi(\exp(H).p)-v^o_0&=&
\sum\limits_{j=1}^r (e^{2\pi i\varepsilon_j(H)}v^o_{\varepsilon_j}
+e^{-2\pi i\varepsilon_j(H)}v^o_{-\varepsilon_j})& \text{in Case (I)} & \text{or}\\
\Phi(\exp(H).p)-v^o_0&=&\sum\limits_{j=1}^r e^{2\pi i\varepsilon_j(H)}v^o_{\varepsilon_j}& \text{in Case (II).} &
\end{array}
$$
By Example \ref{EXAMPLE: CT} $\Phi(T_X)$ is a
Clifford torus.
The vectors $v^o_{\varepsilon_1}-v^o_{-\varepsilon_1},\dots, v^o_{\varepsilon_r}-v^o_{-\varepsilon_r}$ (resp.\ $v^o_{\varepsilon_1},\dots, v^o_{\varepsilon_r}$)
have the same length as $\varepsilon_1,\dots,\varepsilon_r$ lie in the same $W_X$-orbit. Thus
the unit lattice of $\Phi(T_X)$ is cubic. \par
Moreover, $\Phi|_{T_X}:T_X\to \Phi(T_X)$ is an isometry after possibly replacing the inner product on $E$ by a multiple:
In fact, the differential of $\Phi|_{T_X}$ at $p$ maps the cubic basis $\{e_1,\dots, e_r\}$ of $\Gamma_X$ onto a cubic basis
of the unit lattice of $\Phi(T_X).$ To show that $\Phi|_{T_X}$ is injective we assume that $\Phi(\exp(H).p)=\Phi(p)$ for some $H\in\lieA.$ Then $\varepsilon_j(H)\in\Z$ for all $j\in\{1,\dots, r\}.$ Thus $H\in\Gamma_X$ and $\exp(H).p=p.$\par
Since $\Phi$ is equivariant, it maps any maximal torus of $X$ isometrically onto a Clifford torus of $E.$ Therefore $\Phi$ is an isometric embedding of $X$ with the property that every geodesic of $\Phi(X)$ is contained in a totally geodesic Clifford torus. By  Theorem
\ref{THM: Clifford type Extr Sym} $\Phi(X)$ is an extrinsically symmetric space in $E.$\par
Finally,   the affine hull $\aff(\Phi(X))$ of $\Phi(X)$ coincides with $E.$ Otherwise $0\notin \aff(\Phi(X))$ and there would be a unique
nonzero element $w$ of minimal length in $\aff(\Phi(X)).$  Since $\aff(\Phi(X))$ is $G$-invariant, $G.w=w,$ a contradiction to
irreducibility.
\end{proof}

\begin{remark}\label{REM: after phi Clifford type}\leavevmode
Following \textsc{Cartan} \cite{Cartan} we can embed $V$ $G$-equivariantly into the space
 $C^{\infty}(X)$ of complex valued smooth  functions
 on $X$ by $h:V\to C^{\infty}(X),\; v\mapsto h_v,$
 where $h_v:X\to \C,\; x\mapsto (v,\Phi(x))_{\C}.$
 The submodule $h(V)$ of $C^{\infty}(X)$ contains the $K$-invariant function $h_{v^o}.$ After multiplying $v^o$ with a suitable constant,
 $h_{v^o}$  is given by
 $$
\begin{array}{lllll}
h_{v^o}(\exp(H).p)&=&a+\sum\limits_{j=1}^r \Big(e^{2\pi i \varepsilon_j(H)}
+e^{-2\pi i \varepsilon_j(H)}\Big)&&\\
&=&a+2\sum\limits_{j=1}^r \cos(2\pi  \varepsilon_j(H))& \text{in Case (I)} & \text{or}\\
h_{v^o}(\exp(H).p)&=&a+\sum\limits_{j=1}^r e^{-2\pi i \varepsilon_j(H)}& \text{in Case (II),} &
\end{array}$$ for all $H\in\lieA$
(see Equation \eqref{EQ: geodesics under Phi}). Here $a$ is a real positive constant such that
$h_{v^o}$ is perpendicular to the constant functions with respect to the $L^2$-inner product, that is
the integral of $h_{v^o}$ over $X$ vanishes. It can be shown that $a=0$ in Case (II) since $0$ is not a weight as it is not
contained in the convex hull of the Weyl orbit of $\varepsilon_1.$
\end{remark}

To finish the proof of Theorem \ref{THM: A} we next show:

\begin{theorem}[Uniqueness statement of  Theorem \ref{THM: A}]
\label{THM: uniqueness}
Let $X$ be a symmetric space with rectangular unit lattice and
 let $\Phi:X\to E$ and $\tilde{\Phi}:X\to\tilde{E}$ be two
 full, isometric, $G$-equivariant embeddings of $X$ into real $G$-modules whose images are extrinsically symmetric spaces.
 Then there exists a $G$-equivariant linear isometry $F:E\to \tilde{E}$ such that
 $F\circ\Phi=\tilde{\Phi}.$\par
 Moreover, if $X$ is indecomposable, then $E$ is irreducible and its complexification
 $E^{c}$  (as a $G$-module) is equivalent to $V$ with $V\cong\overline{V}$ if $X$ is of compact type and
 to $V\oplus \overline{V}$ with $V\not\cong\overline{V}$ if $X$ splits off a local $\Sphere^1$-factor, where
 $V$ is the irreducible complex $G$-module with highest weight $\varepsilon_1.$
\end{theorem}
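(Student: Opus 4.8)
The plan is to prove that any such $\Phi$ is congruent to the embedding constructed above, pinning down the $G$-module $E$ and the distinguished $K$-fixed vector along the way. Put $v^o:=\Phi(p)$. Since $\dim X>0$ and $\Phi$ is injective, $v^o\neq 0$; by $G$-equivariance $v^o\in E^K$ and $\Phi(g\cdot p)=g\cdot v^o$, so $\Phi(X)\subset\spann_\R(G\cdot v^o)$, and since the $G$-action is orthogonal, fullness forces $E=\spann_\R(G\cdot v^o)$. I would then reduce to the case that $X$ is indecomposable: by the structure theory of extrinsically symmetric spaces (\cite{Feru-80} and the theory of symmetric $R$-spaces), the de Rham splitting of $X$ is matched by an orthogonal splitting $E=E_1\oplus\dots\oplus E_k$ with $\Phi(X)=M_1\times\dots\times M_k$, each $M_i\subset E_i$ extrinsically symmetric and each $G_i$ acting trivially on $E_j$ for $j\neq i$; one then argues factorwise and sets $F=F_1\oplus\dots\oplus F_k$.

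So let $X$ be indecomposable. The decisive geometric input is that, $\Phi(X)$ being extrinsically symmetric, each maximal torus of $X$ is mapped onto a totally geodesic Clifford torus of $E$ (Theorem \ref{THM: Clifford type Extr Sym}, \cite{EHQ}); thus $\Phi(T_X)$ is a product of $r=\rk X$ circles in pairwise orthogonal planes, and by Proposition \ref{PROP: Weyl orbit} the unit lattice of $\Phi(T_X)$ is cubic, which forces all these circles to have equal radius (a cubic lattice has, up to signs and permutation, a unique orthogonal basis, since the squared lengths of an orthogonal $\Z$-basis are positive integers whose product is the squared determinant $1$). Writing $v^o=\sum_\mu v^o_\mu$ in the weight spaces of $E^c:=E\otimes_\R\C$, so that $v^o_{-\mu}=\overline{v^o_\mu}$ and $\Phi(\exp(H)\cdot p)=\sum_\mu e^{2\pi i\mu(H)}v^o_\mu$, and recalling that $v^o$ being $K$-fixed makes its set of weights $W_X$-invariant and contained in $\Gamma_X^*=\spann_\Z(\varepsilon_1,\dots,\varepsilon_r)$, I would match this parametrisation against the explicit form of a Clifford torus: the frequencies lie in $\Gamma_X^*$ and generate it (by injectivity of $\Phi|_{T_X}$), and the common radius together with the isometry condition forces each frequency to be a single $\pm\varepsilon_j$; hence the nonzero weights of $v^o$ are exactly $\pm\varepsilon_1,\dots,\pm\varepsilon_r$, and $v^o_{\varepsilon_1},\dots,v^o_{\varepsilon_r}$ have a common norm. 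This is the step I expect to be the main obstacle, since it is precisely what forbids $E^c$ from carrying spherical representations of larger highest weight.

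Now the module can be identified. As a cyclic $G$-module generated by a $K$-fixed vector, $E^c$ is a multiplicity-free direct sum of $K$-spherical irreducibles, and by the Theorem of Cartan--Helgason--Takeuchi (Theorem \ref{THM: Cartan Helgason}) each summand $W$ has one-dimensional $W^K$ whose generator has nonzero highest-weight component; hence the highest weight of each summand occurs among the weights of $v^o$, so it is a dominant element of $\{0\}\cup\{\pm\varepsilon_1,\dots,\pm\varepsilon_r\}$, i.e.\ $0$, $\varepsilon_1$, or the highest weight $\varepsilon_1^*$ of $\overline V$. A trivial summand is excluded by fullness. In Case (I) ($X$ of compact type) one has $\varepsilon_1^*=\varepsilon_1$, hence $E^c\cong V$ and $V\cong\overline V$; in Case (II) one has $\varepsilon_1\neq\varepsilon_1^*$, hence $V\not\cong\overline V$, and reality of $E$ (so $\overline{E^c}\cong E^c$) leaves only $E^c\cong V\oplus\overline V$. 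In either case $E$ is $G$-irreducible as a real module — in Case (I) because $E^c$ is irreducible, in Case (II) because $V$, being not self-conjugate, is not the complexification of any real module.

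Finally the uniqueness. By the above, $E$ and $\tilde E$ are isomorphic as real $G$-modules, of real type in Case (I) and of complex type in Case (II); their $G$-invariant inner products are unique up to a positive scalar, and the unit group of $\End_G(E)$ (namely $\{\pm1\}$, resp.\ $S^1$) acts on $E$ by $G$-equivariant isometries, transitively on each sphere of $E^K$. Moreover the isometry requirement expresses $\langle\cdot,\cdot\rangle_\lieA$ as a fixed multiple — independent of the particular embedding — of the pull-back of the normalised inner product of $E$, whence $\|v^o\|=\|\tilde v^o\|$ (alternatively, $\Phi(X)$ is a minimal submanifold of its sphere, so Takahashi's theorem fixes the common radius). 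Combining these facts yields a $G$-equivariant linear isometry $F:E\to\tilde E$ with $F(v^o)=\tilde v^o$, and then $F(\Phi(g\cdot p))=g\cdot F(v^o)=g\cdot\tilde v^o=\tilde\Phi(g\cdot p)$ for all $g\in G$, i.e.\ $F\circ\Phi=\tilde\Phi$.
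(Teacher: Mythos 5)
Your overall skeleton coincides with the paper's proof: reduce to indecomposable $X$, decompose $v^o=\Phi(p)$ into weight vectors of $E^c$, show the nonzero weights occurring in $v^o$ are among $\pm\varepsilon_1,\dots,\pm\varepsilon_r$, invoke Cartan--Helgason--Takeuchi plus fullness to get a multiplicity-free identification of $E^c$ with $V(\varepsilon_1)$ resp.\ $V(\varepsilon_1)\oplus\overline{V(\varepsilon_1)}$, and finally use that $\End_G(E)$ is $\R$ resp.\ $\C$ to move $v^o$ to $\tilde v^o$ by an equivariant map which is then an isometry. But the pivotal step --- the one you yourself flag as ``the main obstacle'' --- is exactly where you give no argument: asserting that ``the common radius together with the isometry condition forces each frequency to be a single $\pm\varepsilon_j$'' is a restatement of what has to be proved, not a proof, and the equality of radii by itself does not exclude weights such as $\varepsilon_j+\varepsilon_k$ or $2\varepsilon_j$ from the support of $v^o$. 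The paper closes this gap with a concrete computation: choose the basis $e_1,\dots,e_r$ so that $c_j(t)=\Phi(\exp(te_j).p)=\sum_{\mu\in M}e^{2\pi i t\mu(e_j)}v^o_\mu$ parameterizes the $j$-th generating circle of the Clifford torus $\Phi(T_X)$ with smallest positive period $1$; since $c_j$ is a planar circle, $c_j(t)=d_j+\cos(2\pi t)a_j+\sin(2\pi t)b_j$, whence $c_j'''(0)=-4\pi^2c_j'(0)$ and so $\mu(e_j)^3=\mu(e_j)$, i.e.\ $\mu(e_j)\in\{-1,0,1\}$ for every $\mu\in M$; then, because distinct generating circles lie in mutually orthogonal planes, $0=\langle c_j''(0),c_k''(0)\rangle=16\pi^4\sum_{\mu\in M}\mu(e_j)^2\mu(e_k)^2\|v^o_\mu\|^2$, which forces each $\mu\in M$ to be supported on a single coordinate, giving $M\setminus\{0\}\subset\{\pm\varepsilon_1,\dots,\pm\varepsilon_r\}$. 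Note that it is period $1$, planarity and orthogonality of the circle planes that are used, not the cubic shape of the lattice. Without this step the identification of $E^c$, the irreducibility of $E$, and hence the congruence statement are all unsupported, so as it stands the proposal has a genuine gap at its core.

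Two smaller points. Your reduction to the indecomposable case through Ferus' theorem and the structure of symmetric $R$-spaces is legitimate but heavier than necessary (and against the spirit of the paper, which avoids that correspondence); Proposition \ref{PROP: CT intrinsic implies extrinsic splitting} yields the extrinsic splitting directly from the Clifford-type property. In the final step, your detour through $\|v^o\|=\|\tilde v^o\|$ (normalisation of the metric, or Takahashi) can be skipped: once $E\cong\tilde E$ and $\tilde v^o\in E^K$, there is $f=a\cdot\mathrm{id}$ resp.\ $f=a\cdot\mathrm{id}+b\cdot J$ with $f(v^o)=\tilde v^o$, so $f\circ\Phi=\tilde\Phi$ by equivariance, and $f$ is automatically a linear isometry because both embeddings are isometric with full images.
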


\begin{proof}
By Proposition \ref{PROP: CT intrinsic implies extrinsic splitting} we may assume that $X$ is indecomposable. By Theorem \ref{THM: Clifford type Extr Sym} $\Phi(X)$ is a submanifold of Clifford type.
Given a full, isometric, G-equivariant embedding $\Phi:X\to E$  in a (real) Euclidean G-module
 $E,$ such that $\Phi(X)$ is of Clifford type, we recover the representation data of $E$ as follows.
 With the chosen base point $p\in X$ we set
 $v^o:=\Phi(p)\in E\setminus\{0\}.$ Note that $v^o$ is $K$-invariant and $\Phi(X)=G.v^o.$
 Now $E^c=E\otimes \C$ is a complex $G$-module.
As in Equation \eqref{EQ: v expressed in weight vectors} we decompose $v^o$ into weight vectors of $E^c$
as $v^o=\sum\limits_{\mu\in M}v^o_{\mu}$
where $v^o_{\mu}$ lies in the weight space in $E^c$ corresponding to the weight
$\mu\in\Omega_{E^c}$ of $E^c$ and $M=\{\mu\in\Omega_{E^c}: v^o_{\mu}\neq 0\}.$
Let $\mu\in M,$ then for all $H\in\lieT$ we have
$\exp(H).v^o_{\mu}=\sum\limits_{\mu\in M}e^{2\pi i\mu(H)}v^o_{\mu}$
for all $H\in\lieT.$ If $\exp(H)\in K$ then $\exp(H)v^o=v^o.$ Hence
$\exp(H)v^o_\mu=v^o_\mu$ and therefore $\mu(H)\in \Z$ for all $\mu\in M.$
This implies $\mu(\lieT\cap\lieK)=0$ and $\mu\in\Gamma_X^*.$\par

Since $\Phi$ is isometric and $\Phi(X)$ is of Clifford type,
$\Phi(T_X)$ is a Clifford torus in $E$
by Theorem \ref{THM: Clifford type Extr Sym}.
Let $\{e_1,\dots, e_r\}$ be an orthogonal basis of $\Gamma$ such that the curves
$$c_j:\R\to E,\quad t\mapsto \Phi(\exp(te_j).p)=\sum\limits_{\mu\in M}e^{2\pi it \mu(e_j)}v^o_\mu,$$
$j=1,\dots, r,$ have smallest positive period $1$ and parameterize the generating circles of $\Phi(T_X)$ through $v^o.$
In particular $c'_j(0)$ is perpendicular to $c'_k(0)$ for $j\neq k.$
By Proposition \ref{PROP: Weyl orbit} we may assume that
$W_X.e_1=\{\pm e_1,\dots, \pm e_r\}$ if $X$ is of compact type, or
$W_X.e_1=\{e_1,\dots, e_r\}$ if $X$ splits off a local $\Sphere^1$-factor.
We fix again a Weyl chamber $\lieA_+$ in $\lieA$ containing $e_1$ in its closure,
a Weyl chamber $\lieT_+$ in $\lieT$ containing $\lieA_+$ in its closure and
the partial ordering on $\lieT^*$ induced by $\lieT_+.$
Let $j\in\{1,\dots, r\}.$
Since $c_j$ has smallest positive period $1$ and since $c_j(\R)$ is a planar circle,
there are vectors $a_j,b_j, d_j\in E$ with
$c_j(t)=d_j+\cos(2\pi t)a_j+\sin(2\pi t)b_j$ for all $t\in\R.$
It follows that $-4\pi^2 c'_j(0)= c'''_j(0)$ and hence $\mu(e_j)=\mu(e_j)^3$ for all $\mu\in M.$ Thus for any $\mu\in M$ and for any
$j\in\{1,\dots, r\}$ we have $\mu(e_j)\in\{-1,0,1\}.$
We extend the inner product on $E$ to a $G$-invariant Hermitian inner product on $E^c.$ Then
$v^o_\mu$ is perpendicular to $v^o_{\mu'}$ for $\mu,\mu'\in M$ with $\mu\neq \mu'.$
Since different generating circles of $\Phi(T_X)$ through $v^o$ lie in perpendicular
planes, we have
$$0=\langle c_j''(0),c''_k(0)\rangle=16\pi^4\cdot \sum\limits_{\mu\in M}
(\mu(e_j))^2(\mu(e_k))^2\|v^o_\mu\|^2.$$
If $k\in\{1,\dots,r\}$ and $\mu\in M$ with $\mu(e_k)\neq 0$ (i.\ e.\ $\mu(e_k)=\pm 1$) then $\mu(e_j)=0$ for all $j\neq k.$
Summing up we have
$$M\setminus\{0\}\subset\{\pm \varepsilon_1,\dots, \pm \varepsilon_r\},$$
where $\varepsilon_1,\dots,\varepsilon_r\in\lieA^*\subset\lieT^*$ is the dual basis of $e_1,\dots, e_r.$
Note that the only possible elements in the intersection of $M$ with the closure of $\lieA_+^*$ are
$\varepsilon_1$ and $\overline{\varepsilon_1}:=w_o(-\varepsilon_1),$ where
$w_o$ is the unique element in $W_X$ with $w_o(-\lieA_+)=\lieA_+.$\par

Since $\Phi(X)=G.v^o$ is full in $E$, $\Phi(X)$ is not contained in any proper complex affine subspace of
$E\otimes \C.$ Let $V'$ be an irreducible  complex $G$-submodule of $E\otimes \C$. Then
the orthogonal projection of $v^o$ onto $V'$ is nonzero.
Otherwise $G.v^o$ would be contained in the orthogonal complement of $V'.$ Thus $V'$ is $K$-spherical.
Let $\lambda'\in (\Gamma_X^*)_+$ be the highest weight of
$V'$ (see Theorem \ref{THM: Cartan Helgason}), then $\lambda'\in (M\setminus\{0\})\cap (\Gamma_X^*)_+
\subset\{\varepsilon_1,\overline{\varepsilon}_1\}.$
This shows that $E\otimes\C$ is a direct sum of $K$-spherical $G$-modules equivalent to
$V(\varepsilon_1)$ or $V(\overline{\varepsilon_1})$ in the notation of Theorem \ref{THM: Cartan Helgason}.
Note that $V(\overline{\varepsilon_1})=\overline{V(\varepsilon_1)},$ the conjugate representation of $V(\varepsilon_1).$
Since $\overline{E\otimes\C}$ is equivalent to $E\otimes \C,$ both $V(\varepsilon_1)$ and
$\overline{V(\varepsilon_1)}$ occur in $E\otimes \C.$ But, by fullness of $\Phi(X),$ none of the equivalence classes of these
modules appears  twice  in $E\otimes \C.$
Indeed, assume for simplicity that $E\otimes \C=U\oplus U',$
where $U$ and $U'$ are isomorphic irreducible complex $G$-modules. Let $f:U\to U'$ be a $G$-equivariant isomorphism and
let $v^o=u+u'$ with $u\in U\setminus\{0\}$ and $u'\in U'\setminus\{0\}.$ Then $u'=zf(u)$ for some $z\in\C\setminus\{0\},$
since ${U'}^K$ has dimension one. But then
$\{u+zf(u): u\in U\}$ is a proper $G$-submodule of $E\otimes \C$ containing $\Phi(X),$
contradicting the fullness of $\Phi(X).$\par

Summing up we have seen that $E\otimes \C\cong V(\varepsilon_1)$ if $\varepsilon_1=\overline{\varepsilon}_1,$ that is if $X$ is of compact
type or $E\otimes \C\cong V(\varepsilon_1)\oplus \overline{V(\varepsilon_1)}$ if $\varepsilon_1\neq\overline{\varepsilon}_1,$ that is if $X$ splits off a local $\Sphere^1$-factor. Thus $E\otimes \C$ is (up to equivalence) uniquely determined by $X.$ Since
the realification of $E\otimes\C$ is equivalent to $E\oplus E,$ $X$ also determines $E$ uniquely (up to equivalence).
Moreover $E$ must be irreducible, since otherwise $E\otimes \C$ would split into two non-trivial self-conjugate submodules.
If $E\otimes \C$ is irreducible (that is if $X$ is of compact type) then $E$ is a real form of
$E\otimes \C$ and therefore $\mathrm{dim}_{\R}(E^K)=\mathrm{dim}_{\C}((E\otimes \C)^K)=1.$
If $X$ splits off a local $\Sphere^1$-factor, then $E$ is equivalent to the realification of $V(\varepsilon_1)$ (which is equal to the realification of $\overline{V(\varepsilon_1)}$) and $\mathrm{dim}_{\R}(E^K)=2\cdot \mathrm{dim}_{\C}(V(\varepsilon_1))=2.$
In this case $E$ carries a $G$-invariant complex structure $J$ and $E^K=\R v^o+\R Jv^o.$
In any case the $G$-module $E$ is uniquely determined by $X$.\par

Let $\tilde{\Phi}:X\to\tilde{E}$ be another full, isometric, G-equivariant embedding in a Euclidean $G$-module
$\tilde{E}.$ Then $\tilde{E}$ and $E$ must be equivalent. We may therefore assume that $E=\tilde{E}.$ Since
$\tilde{v}^o:=\tilde{\Phi}(o)\in E^K,$ there exists a $G$-equivariant isomorphism $f:E\to E$ of the form $f=a\cdot \mathrm{id}$
(if $X$ is of compact type) or $f=a\cdot \mathrm{id}+b\cdot J$ with $a,b\in\R,$ and $f(v^o)=\tilde{v}^o.$ Therefore
$f\circ\Phi=\tilde{\Phi}.$ Since $\Phi$ and $\tilde{\Phi}$ are isometries and $\Phi(X)$ and $\tilde{\Phi}(X)$ are both full, $f$ is a linear isometry.
\end{proof}

\begin{remark}
The proof of Theorem  \ref{THM: uniqueness} shows that in the context of this statement $E$ is irreducible and its complexification
 $E^{\C}$ is (as a $G$-module) equivalent to $V$ with $V\cong\overline{V}$ if $X$ is of compact type and
 to $V\oplus \overline{V}$ with $V\not\cong\overline{V}$ if $X$ splits off a local $\Sphere^1$-factor, where
 $V$ is the irreducible complex $G$-module with highest weight $\varepsilon_1.$
\end{remark}

\begin{remark}
 Let $X$ be a simply connected indecomposable symmetric space with rectangular unit lattice of rank $r\geq 1,$
 let $S$ be a totally geodesic polysphere (see Theorem \ref{THM: Polysphere}) that contains a maximal torus $T_X$ of $X$
 and let $\Phi$ the embedding of Theorem \ref{THM: A}. Then $\Phi(S)$ splits extrinsically as a product of round 2-spheres in $3$-dimensional affine Euclidean subspaces.\par
Indeed, since the maximal torus $\Phi(T_X)$ of $\Phi(S)$ is a Clifford torus,
 $\Phi(S)$ is of Clifford type. Thus the
intrinsic splitting of $S$ into 2-spheres is also extrinsic by Proposition \ref{PROP: CT intrinsic implies extrinsic splitting}. Now the uniqueness statement of Theorem \ref{THM: A} yields the claim.
\end{remark}

The uniqueness statement in Theorem \ref{THM: A} has the following immediate consequence which is a sharpening of \cite{EQT19} without using classification and case-by-case studies.

\begin{corollary}\label{COR: Extension of isometries}
 Let $X_1\subset E_1$ and $X_2\subset E_2$ be two compact extrinsically symmetric spaces (or, equivalently, two submanifolds of Clifford type).
 Then any isometry $f:X_1\to X_2$ extends to a Euclidean map between the affine hulls of $X_1$ and $X_2.$
 In particular every (intrinsic) isometry of a compact extrinsically symmetric space $X\subset E$ extends to a Euclidean motion of the Euclidean space $E.$
\end{corollary}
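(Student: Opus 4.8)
The plan is to reduce the corollary to the uniqueness statement of Theorem \ref{THM: A} (i.e.\ Theorem \ref{THM: uniqueness}). The first step is to recall that by \textsc{Ferus}' theorem (cited in the introduction) a compact extrinsically symmetric space is the same thing as a submanifold of Clifford type, and in particular it is a symmetric $R$-space realized as an $s$-orbit, hence an (intrinsically) Riemannian symmetric space. So $X_1$ and $X_2$ are Riemannian symmetric spaces and $f\colon X_1\to X_2$ is an isometry of Riemannian symmetric spaces. By restricting to a connected component if necessary (the isometry $f$ maps components to components), we may assume $X_1$ and $X_2$ are connected. Moreover, by replacing $E_i$ with the affine hull $\aff(X_i)$ and translating so that $0\in\aff(X_i)$, we may assume each $X_i$ is full in the vector space $E_i$ and that $0$ lies in $\aff(X_i)$; one then argues, exactly as in the last paragraph of the proof of the existence part of Theorem \ref{THM: A}, that the linear span of $X_i$ equals $E_i$ and that $X_i$ is in fact a (linearly) full orbit $G_i.v_i^o$ under its transvection group $G_i$ acting orthogonally on $E_i$ — so the two inclusions $X_i\subset E_i$ are exactly the kind of full, isometric, $G_i$-equivariant extrinsically symmetric embeddings to which Theorem \ref{THM: uniqueness} applies. (The extrinsic symmetry of $X_i$ forces the $s$-representation to act on $E_i$ orthogonally; this is the content of \textsc{Ferus}' description.)

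Next, transport the second embedding through $f$. Since $f\colon X_1\to X_2$ is an isometry of symmetric spaces, it induces an isomorphism of transvection groups $f_*\colon G_1\to G_2$ intertwining the actions, so $\tilde\Phi:=\iota_2\circ f\colon X_1\to E_2$ is a full, isometric embedding of $X_1$ whose image $X_2$ is extrinsically symmetric, and which is equivariant with respect to $G_1$ acting on $E_2$ via $f_*$. Thus $\iota_1\colon X_1\hookrightarrow E_1$ and $\tilde\Phi\colon X_1\hookrightarrow E_2$ are two full, isometric, $G_1$-equivariant embeddings of the one symmetric space $X_1$ with extrinsically symmetric images. Now Theorem \ref{THM: uniqueness} applies: there is a $G_1$-equivariant linear isometry $F\colon E_1\to E_2$ with $F\circ\iota_1=\tilde\Phi=\iota_2\circ f$. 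Undoing the initial translations, $F$ becomes a Euclidean isometry between the affine hulls $\aff(X_1)$ and $\aff(X_2)$ restricting to $f$ on $X_1$. This proves the first assertion; the ``in particular'' statement is the special case $X_1=X_2=X$, $E_1=E_2=E$, where any intrinsic isometry of $X$ (not just one in the transvection group) extends to a Euclidean motion of $\aff(X)=E$, and one extends it arbitrarily to all of $E$ by acting as the identity on $\aff(X)^\perp$.

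The one genuine point to check carefully — and the place I expect the only real friction — is the reduction to the connected, full, equivariant situation: namely that a compact extrinsically symmetric $X\subset E$ is, after passing to the affine hull and suitably translating, genuinely a full orbit of its transvection group acting by linear isometries, so that Theorem \ref{THM: uniqueness} is literally applicable. This is essentially \textsc{Ferus}' theorem together with the ``affine hull'' normalization used already in the existence proof of Theorem \ref{THM: A}; possibly decomposability of $X$ has to be handled by Proposition \ref{PROP: CT intrinsic implies extrinsic splitting} as in the proof of Theorem \ref{THM: uniqueness}, splitting $X$ and $E$ into irreducible pieces and applying the uniqueness factor-by-factor. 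Everything else is formal: the identification of isometries of symmetric spaces with equivariant embeddings, and a direct invocation of Theorem \ref{THM: uniqueness}.
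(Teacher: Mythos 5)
Your proposal is correct and follows exactly the route the paper intends: the paper states this corollary as an immediate consequence of the uniqueness statement (Theorem \ref{THM: uniqueness}), obtained by comparing the inclusion $X_1\subset\aff(X_1)$ with the transported embedding $\iota_2\circ f$ and invoking the unique $G$-equivariant linear isometry, precisely as you do. The only cosmetic refinement: to make the transvection group act \emph{linearly} one should translate a fixed point of the extended symmetries (e.g.\ the center of mass of the compact orbit) to the origin, not merely an arbitrary point of the affine hull; with that normalization the equivariance and orthogonality you flag as the ``friction point'' follow directly from the fact that the extrinsic reflections restrict to the geodesic symmetries.
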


\section{Eigenvalues of the Laplacian}
\label{SEC: Eigenvalues of the Laplacian}

The knowledge of the Euclidean root datum of a symmetric space $X$ with rectangular unit lattice from Section \ref{SEC: Roots of symmetric spaces with rectangular unit lattice} allows us to compute the eigenvalues of the Laplacian of $X$ by means of \textsc{Freudenthal's} formula.
We conclude in particular that the embedding of $X$ constructed in Theorem \ref{THM: A} is congruent to  an
embedding in the first eigenspace in general up to some exceptional cases.
A proof of this last result has been outlined earlier by \textsc{Kobayashi}--\textsc{Takeuchi} \cite{KobTak} and \textsc{Ohnita} \cite{Ohnita} using the classification of standardly embedded symmetric $R$-spaces.\par

Let $X$ be an indecomposable symmetric space with rectangular unit lattice $\Gamma_X\subset\lieA.$
As in Section
\ref{SEC: Roots of symmetric spaces with rectangular unit lattice} let
$e_1,\dots, e_r$ be a cubic basis of $\Gamma_X$ contained in the $W_X$-orbit of
$e_1$ and let $\varepsilon_1,\dots, \varepsilon_r$ be the corresponding dual basis.
By Section
\ref{SEC: Roots of symmetric spaces with rectangular unit lattice}
we choose a set of positive roots $\Root_X^+$ in $\Root_X$ such that
$$2\Root_X^+ =2\Root_X\cap \{\varepsilon_j,\; 2\varepsilon_j,\; \varepsilon_j+\varepsilon_k,\; \varepsilon_j-\varepsilon_k:\; 1\leq j<k\leq r\}.$$
Let $2\rho_X$ be the sum of the positive roots of $X$ counted with their \emph{multiplicities}, that is the (complex) dimension of their root spaces. Let $m_1$, $m_2$, $m_+$ and $m_-$ denote the multiplicities of $\varepsilon_j$, $2\varepsilon_j$,
$\varepsilon_j+\varepsilon_k$ and $\varepsilon_j-\varepsilon_k,$ respectively  (which are independent of $j$ and $k$), $1\leq j<k\leq r.$
Any of them might be zero and $m_+=m_-:=m_{\pm},$ unless $X$ is of type $\widehat{A}_{r-1}$ or $\widehat{D}_2.$
With $\sum\limits_{1\leq j<k\leq r}(\varepsilon_j+\varepsilon_k)=(r-1)\sum\limits_{j=1}^r\varepsilon_j$ and
$\sum\limits_{1\leq j<k\leq r}(\varepsilon_j-\varepsilon_k)=\sum\limits_{j=1}^r(r-2j+1)\varepsilon_j$
Proposition \ref{PROP: root systems of rect unit lattice}
implies:

\begin{lemma}\label{LEM: 2rho}
 $$2\rho_X=\begin{cases}
           \frac{m_-}{2}\sum\limits_{j=1}^r(r-2j+1)\varepsilon_j & \text{in case}\; \widehat{A}_{r-1},\\
           \sum\limits_{j=1}^r \left(\frac{m_1}{2}+m_2+m_{\pm}(r-j)\right)\varepsilon_j & \text{in cases}\; \widehat{B}_{r},\; \widehat{C}_{r}, \widehat{D}_r\; (r\geq 3)\; \text{and}\; \widehat{BC}_r,\\
           \frac{m_+ +m_-}{2}\varepsilon_1+\frac{m_+ -m_-}{2}\varepsilon_2, &  \text{in  case}\; \widehat{D}_2.
          \end{cases}$$
\end{lemma}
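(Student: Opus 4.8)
The plan is to evaluate $2\rho_X=\sum_{\alpha\in\Root_X^+}m_\alpha\,\alpha$ directly, where $m_\alpha=\dim_{\C}\lieG_\alpha$, by inserting the explicit list of positive roots furnished by Proposition \ref{PROP: root systems of rect unit lattice}. By the choice of $\Root_X^+$, every positive root of $X$ is one of $\tfrac12\varepsilon_j$ (multiplicity $m_1$), $\varepsilon_j=\tfrac12(2\varepsilon_j)$ (multiplicity $m_2$), $\tfrac12(\varepsilon_j+\varepsilon_k)$ (multiplicity $m_+$) or $\tfrac12(\varepsilon_j-\varepsilon_k)$ (multiplicity $m_-$), with $1\le j<k\le r$, and which of these shapes actually occurs is prescribed by the type. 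With the convention that a multiplicity vanishes when the corresponding shape is absent from $\Root_X$ (so $m_1=m_2=0$ in types $\widehat A_{r-1}$ and $\widehat D_r$, $m_1=0$ in type $\widehat C_r$, $m_2=0$ in type $\widehat B_r$, and moreover $m_+=0$ in type $\widehat A_{r-1}$ since $A_{r-1}$ has no root $\varepsilon_j+\varepsilon_k$), one may write, uniformly in all five cases,
$$2\rho_X=\Bigl(\frac{m_1}{2}+m_2\Bigr)\sum_{j=1}^r\varepsilon_j+\frac{m_+}{2}\sum_{1\le j<k\le r}(\varepsilon_j+\varepsilon_k)+\frac{m_-}{2}\sum_{1\le j<k\le r}(\varepsilon_j-\varepsilon_k).$$

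Next I substitute the two combinatorial identities recalled just before the statement, $\sum_{1\le j<k\le r}(\varepsilon_j+\varepsilon_k)=(r-1)\sum_{j=1}^r\varepsilon_j$ and $\sum_{1\le j<k\le r}(\varepsilon_j-\varepsilon_k)=\sum_{j=1}^r(r-2j+1)\varepsilon_j$, and collect the coefficient of each $\varepsilon_j$, obtaining
$$2\rho_X=\sum_{j=1}^r\Bigl(\tfrac{m_1}{2}+m_2+\tfrac{m_+}{2}(r-1)+\tfrac{m_-}{2}(r-2j+1)\Bigr)\varepsilon_j.$$
It remains to split into the three cases of the lemma. Outside types $\widehat A_{r-1}$ and $\widehat D_2$ we are given $m_+=m_-=:m_\pm$ (equivalently, the root multiplicity is constant on $W_X$-orbits, and $\{\varepsilon_j\pm\varepsilon_k\}$ is a single orbit in each of the irreducible systems $B_r,C_r,D_r$ ($r\ge 3$) and $BC_r$), whence the last two summands combine into $\tfrac{m_\pm}{2}\bigl((r-1)+(r-2j+1)\bigr)=m_\pm(r-j)$; this is precisely the middle line of the stated formula, valid for $\widehat B_r,\widehat C_r,\widehat D_r$ ($r\ge 3$) and $\widehat{BC}_r$. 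In type $\widehat A_{r-1}$ only the $m_-$-term remains, giving $2\rho_X=\tfrac{m_-}{2}\sum_{j=1}^r(r-2j+1)\varepsilon_j$. In type $\widehat D_2$ one has $r=2$, $m_1=m_2=0$, and $m_+,m_-$ need not coincide since $D_2$ is reducible; here the two identities read $\varepsilon_1+\varepsilon_2$ and $\varepsilon_1-\varepsilon_2$, so $2\rho_X=\tfrac{m_+}{2}(\varepsilon_1+\varepsilon_2)+\tfrac{m_-}{2}(\varepsilon_1-\varepsilon_2)=\tfrac{m_++m_-}{2}\varepsilon_1+\tfrac{m_+-m_-}{2}\varepsilon_2$.

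There is no real obstacle; the computation is bookkeeping. The three points that require care are: keeping the factor $2$ straight between an actual root and the corresponding vector in $2\Root_X^+$, so that e.g.\ the root $\tfrac12(2\varepsilon_j)=\varepsilon_j$ contributes $m_2\varepsilon_j$ rather than $2m_2\varepsilon_j$; remembering that type $\widehat A_{r-1}$ is not simply the case $m_+=m_-$ but one in which the shape $\varepsilon_j+\varepsilon_k$ does not occur at all; and isolating $\widehat D_2$, whose reducible root system is exactly the reason $m_+=m_-$ can fail and why it appears on a separate line.
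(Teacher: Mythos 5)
Your proposal is correct and follows essentially the same route as the paper: the paper derives the lemma directly from Proposition \ref{PROP: root systems of rect unit lattice} together with the two displayed summation identities, which is exactly the computation you carry out, only written out in more detail (including the factor-$\tfrac12$ bookkeeping between roots and the elements of $2\Root_X$, the vanishing multiplicities in each type, and the separate treatment of $\widehat{A}_{r-1}$ and $\widehat{D}_2$ where $m_+\neq m_-$ is possible).
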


Lemma \ref{LEM: 2rho} enables us to compute the eigenvalues of the Laplacian $\Delta=-\mathrm{div}\circ \mathrm{grad}$ of $X.$ In fact, the complex eigenspaces $E_{\lambda}(\C)$ of $\Delta$  decompose into irreducible
submodules which, according to Theorem \ref{THM: Cartan Helgason}, are (up to equivalence) the $V_{\omega}$ with $\omega\in (\Gamma^*_X)_+$ and
each $V_{\omega}$ occurs in precisely one complex eigenspace $E_{\lambda}(\C).$ This $\lambda$, which we denote by $\lambda_{\omega}$ is given by
\textsc{Freudenthal}'s formula
\begin{equation}
 \label{EQ: Freudenthal}
\lambda_{\omega}=4\pi^2 \langle \omega+2\rho_X,\; \omega\rangle
\end{equation}
(cf.\ \cite[Cor.\ 2, p.\ 191]{TakeBook}). Since $\Delta$ is a real operator, each real eigenspace $E_{\lambda}(\R)$ is a real form of $E_{\lambda}(\C)$
and $\{\lambda_{\omega}:\; \omega\in (\Gamma^*_X)_+\}$ is the set of eigenvalues of $\Delta.$
Combining Lemma \ref{LEM: 2rho} with \textsc{Freudenthal}'s formula \eqref{EQ: Freudenthal}
and observing that $[0,1]\to X,\; t\mapsto \exp(t e_j)p,$ are shortest closed geodesic arcs of $X$ of common length
$L=\|e_j\|=\frac{1}{\|\varepsilon_j\|}$ $(j=1,\dots, r)$ we get

\begin{proposition}\label{PROP: lambda-omega}
Let $\omega=\sum\limits_{j=1}^r k_j\varepsilon_j\in(\Gamma^*_X)_+.$ Then
  $$\lambda_{\omega}=\left\{\begin{array}{l}
           \frac{4\pi^2}{L^2}\sum\limits_{j=1}^rk_j\left(k_j+\frac{m_-}{2}(r-2j+1)\right)\; \text{in case}\; \widehat{A}_{r-1},\\
          \frac{4\pi^2}{L^2}\sum\limits_{j=1}^rk_j\left(k_j+\frac{m_1}{2}+m_2+m_{\pm}(r-j)\right)\; \text{in  cases}
          \; \widehat{B}_{r},\; \widehat{C}_{r}, \widehat{D}_r\; (r\geq 3)\; \text{and}\; \widehat{BC}_r,\\
           \frac{4\pi^2}{L^2}\left(k_1\left(k_1+\frac{m_++m_-}{2}\right)+k_2\left(k_2+\frac{m_+-m_-}{2}\right)\right)\; \text{in case}\; \widehat{D}_2.
          \end{array}\right.$$
\end{proposition}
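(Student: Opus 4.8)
The plan is to substitute the explicit formula for $2\rho_X$ from Lemma \ref{LEM: 2rho} into Freudenthal's formula \eqref{EQ: Freudenthal} and expand the inner product, using that $e_1,\dots,e_r$ is an orthogonal basis of common length $L$ (Proposition \ref{PROP: Weyl orbit} guarantees this cubic basis), so that $\varepsilon_1,\dots,\varepsilon_r$ is orthogonal of common length $1/L$. Concretely, $\langle\varepsilon_j,\varepsilon_k\rangle = \delta_{jk}/L^2$, hence for $\omega=\sum_j k_j\varepsilon_j$ and $2\rho_X=\sum_j c_j\varepsilon_j$ we get $\langle\omega+2\rho_X,\omega\rangle = \frac{1}{L^2}\sum_{j=1}^r k_j(k_j+c_j)$, and multiplying by $4\pi^2$ yields $\lambda_\omega$. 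The coefficients $c_j$ are read off directly from the three cases of Lemma \ref{LEM: 2rho}: $c_j=\frac{m_-}{2}(r-2j+1)$ in case $\widehat A_{r-1}$; $c_j=\frac{m_1}{2}+m_2+m_\pm(r-j)$ in cases $\widehat B_r,\widehat C_r,\widehat D_r\ (r\ge 3)$ and $\widehat{BC}_r$; and $c_1=\frac{m_++m_-}{2}$, $c_2=\frac{m_+-m_-}{2}$ in case $\widehat D_2$. This reproduces the three displayed formulas verbatim.

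First I would record the normalization: since $[0,1]\to X,\ t\mapsto\exp(te_j)p$ is a closed geodesic of length $\|e_j\|=L$ and these are the shortest closed geodesics (as the $e_j$ are shortest lattice vectors by orthogonality), and since $\varepsilon_j(e_k)=\delta_{jk}$ forces $\|\varepsilon_j\|=1/L$ by duality of orthogonal bases, we have $\langle\varepsilon_j,\varepsilon_k\rangle=\delta_{jk}/L^2$. Next I would cite that the eigenvalues of $\Delta$ are exactly $\{\lambda_\omega:\omega\in(\Gamma_X^*)_+\}$ with $\lambda_\omega$ given by \eqref{EQ: Freudenthal}, which is already established in the paragraph preceding the proposition via Theorem \ref{THM: Cartan Helgason} and \cite[Cor.\ 2, p.\ 191]{TakeBook}. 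Then I would carry out the substitution case by case, which is a one-line bilinearity computation in each case.

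There is essentially no obstacle here: the proposition is a direct corollary of Lemma \ref{LEM: 2rho}, Freudenthal's formula, and the orthonormality (after scaling by $L$) of the dual basis. The only point requiring a word of care is the case distinction $\widehat D_2$, where $m_+\ne m_-$ is possible and the two coefficients $c_1,c_2$ differ; but this is exactly what the third branch of Lemma \ref{LEM: 2rho} already encodes, so nothing new is needed. I would therefore present the proof as: fix the inner product normalization, invoke Freudenthal, plug in Lemma \ref{LEM: 2rho}, and expand.

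\begin{proof}
Since $e_1,\dots,e_r$ is a cubic basis of $\Gamma_X$ of common length $L$ and $\varepsilon_1,\dots,\varepsilon_r$ is the dual basis, the $\varepsilon_j$ form an orthogonal basis of $\spann(\Root_X)$ (or of $\lieA^*$) with $\langle\varepsilon_j,\varepsilon_k\rangle=\delta_{jk}/L^2$; in particular $\|\varepsilon_j\|=1/L=1/\|e_j\|$, and $[0,1]\to X,\ t\mapsto\exp(te_j)p$ are shortest closed geodesic arcs of $X$ of common length $L$. By Theorem \ref{THM: Cartan Helgason} the set of eigenvalues of $\Delta$ is $\{\lambda_\omega:\omega\in(\Gamma_X^*)_+\}$ with $\lambda_\omega=4\pi^2\langle\omega+2\rho_X,\omega\rangle$ as in \eqref{EQ: Freudenthal}. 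Write $2\rho_X=\sum_{j=1}^r c_j\varepsilon_j$ and $\omega=\sum_{j=1}^r k_j\varepsilon_j$. Then by bilinearity and orthogonality of the $\varepsilon_j$,
\begin{equation*}
\lambda_\omega=4\pi^2\langle\omega+2\rho_X,\omega\rangle=4\pi^2\sum_{j=1}^r k_j(k_j+c_j)\,\langle\varepsilon_j,\varepsilon_j\rangle=\frac{4\pi^2}{L^2}\sum_{j=1}^r k_j(k_j+c_j).
\end{equation*}
It remains to insert the values of $c_j$ supplied by Lemma \ref{LEM: 2rho}. In case $\widehat A_{r-1}$ we have $c_j=\frac{m_-}{2}(r-2j+1)$, giving the first formula. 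In cases $\widehat B_r$, $\widehat C_r$, $\widehat D_r$ with $r\geq 3$, and $\widehat{BC}_r$ we have $c_j=\frac{m_1}{2}+m_2+m_\pm(r-j)$, giving the second formula. Finally, in case $\widehat D_2$ we have $c_1=\frac{m_++m_-}{2}$ and $c_2=\frac{m_+-m_-}{2}$, so that
\begin{equation*}
\lambda_\omega=\frac{4\pi^2}{L^2}\left(k_1\left(k_1+\frac{m_++m_-}{2}\right)+k_2\left(k_2+\frac{m_+-m_-}{2}\right)\right),
\end{equation*}
which is the third formula. This proves the proposition.
\end{proof}
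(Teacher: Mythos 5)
Your proof is correct and is exactly the argument the paper intends: the paper derives the proposition by combining Lemma \ref{LEM: 2rho} with Freudenthal's formula \eqref{EQ: Freudenthal} and the observation that $L=\|e_j\|=1/\|\varepsilon_j\|$, which is precisely your substitution-and-expansion using $\langle\varepsilon_j,\varepsilon_k\rangle=\delta_{jk}/L^2$. Your write-up merely makes the one-line bilinearity computation explicit, so there is nothing to correct.
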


If $\omega=\sum\limits_{j=1}^r k_j\varepsilon_j$ with $k_1,\dots, k_r\in\Z,$ then $\omega\in (\Gamma_X^*)_+ $ if and only if $k_1\geq \dots\geq k_r$
in
case $\widehat{A}_{r-1};$ $k_1\geq \dots \geq k_r\geq 0$ in cases $\widehat{B}_r$, $\widehat{C}_r$, $\widehat{BC}_r$ and
$k_1\geq \dots \geq k_{r-1}\geq |k_r|$ in case $\widehat{D}_r.$ From this we get the following corollary which for irreducible $X$
is equivalent to a result of \textsc{Ohnita} \cite[Thm.\ 7]{Ohnita}.

\begin{corollary}\label{COR: embedding into eigenspaces of Laplacian}
 The embedding of an indecomposable symmetric space $X$ with rectangular unit lattice given in Theorem \ref{THM: A} is congruent to an embedding in the first eigenspace of the Laplacian of $X,$
 unless $X$ is of type $\widehat{A}_{r-1},\; r\geq 2,$ with $m_->2$ or of type $\widehat{D}_2$ with $|m_+-m_-|> 2.$
\end{corollary}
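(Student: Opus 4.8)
The plan is to identify the Laplace eigenvalue realised by the embedding $\Phi$ of Theorem \ref{THM: A}, translate the assertion into a minimisation of the numbers $\lambda_{\omega}$ over nonzero dominant weights, and then check the five types of Theorem \ref{THM: possible root datum types} using Proposition \ref{PROP: lambda-omega}. First I would record, from the proof of Theorem \ref{THM: A} (its uniqueness statement and the remark following it), that $E^{c}$ is equivalent to $V_{\varepsilon_1}$ if $X$ is of compact type and to $V_{\varepsilon_1}\oplus V_{\overline{\varepsilon_1}}$ if $X$ splits off a local $\Sphere^1$-factor, where $V_{\varepsilon_1}$ is the irreducible $K$-spherical complex $G$-module of highest weight $\varepsilon_1$. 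Since each $V_{\omega}$, $\omega\in(\Gamma_X^*)_+$, lies in exactly one complex eigenspace of $\Delta$, namely $E_{\lambda_{\omega}}(\C)$, and $\lambda_{\overline{\varepsilon_1}}=\lambda_{\varepsilon_1}$ ($\Delta$ being real), the real $G$-module $E$ is $G$-equivariantly isometric to a submodule of the real eigenspace $E_{\lambda_{\varepsilon_1}}(\R)$, so $\Phi$ is (congruent to) an embedding into $E_{\lambda_{\varepsilon_1}}(\R)$. Conversely, any full isometric $G$-equivariant extrinsically symmetric embedding of $X$ into a subspace of some $E_{\mu}(\R)$ is congruent to $\Phi$ by the uniqueness statement of Theorem \ref{THM: A}, hence the $G$-module it spans is equivalent to $E$, which forces $V_{\varepsilon_1}\subset E_{\mu}(\C)$ and so $\mu=\lambda_{\varepsilon_1}$. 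Thus $\Phi$ is congruent to an embedding in the \emph{first} eigenspace of $\Delta$ if and only if $\lambda_{\varepsilon_1}=\min\{\lambda_{\omega}:\omega\in(\Gamma_X^*)_+\setminus\{0\}\}$. By Proposition \ref{PROP: lambda-omega}, writing $\omega=\sum_j k_j\varepsilon_j$ we have $\lambda_{\omega}=\tfrac{4\pi^2}{L^2}f(\omega)$ with $f(\omega)=\sum_j k_j(k_j+c_j)$ for explicit type-dependent $c_j$ (so $f(\varepsilon_1)=1+c_1$), and everything reduces to comparing the integers $f(\omega)$.

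Next I would settle the types $\widehat{B}_{r}$, $\widehat{C}_{r}$, $\widehat{BC}_{r}$ and $\widehat{D}_{r}$ with $r\ge 3$, where no exception occurs. Here $c_j=\tfrac{m_1}{2}+m_2+m_{\pm}(r-j)$ satisfies $c_1\ge\dots\ge c_r\ge 0$ with $c_j>0$ for $j<r$ and $c_r=0$ only for $\widehat{D}_{r}$, and dominance reads $k_1\ge\dots\ge k_r\ge 0$ (resp.\ $k_1\ge\dots\ge k_{r-1}\ge|k_r|$). For $\omega\ne 0$ one has $k_1\ge 1$; discarding the nonnegative summands with $j\ge 2$ (the summand $j=r$ equals $k_r^2\ge 0$ when $c_r=0$) gives $f(\omega)\ge k_1(k_1+c_1)\ge 1+c_1=f(\varepsilon_1)$, using $k_1(k_1+c_1)-(1+c_1)=(k_1-1)(k_1+1+c_1)\ge 0$. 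So $\lambda_{\varepsilon_1}$ is the first eigenvalue. For $\widehat{D}_2$ we have $c_1=\tfrac{m_++m_-}{2}\ge 0$, $c_2=\tfrac{m_+-m_-}{2}$ and dominance $k_1\ge|k_2|$; for $\omega\ne 0$ write $k_1=1+a$ with $a\ge 0$ and note $f(\omega)-f(\varepsilon_1)=a(a+2+c_1)+k_2(k_2+c_2)$, whose first term is $\ge 0$. If $|c_2|\le 1$, i.e.\ $|m_+-m_-|\le 2$, then $k_2(k_2+c_2)\ge|k_2|(|k_2|-1)\ge 0$, so $\lambda_{\varepsilon_1}$ is the first eigenvalue; if $|c_2|>1$, then $\varepsilon_1-\varepsilon_2$ (for $c_2>1$) resp.\ $\varepsilon_1+\varepsilon_2$ (for $c_2<-1$) is dominant with $f(\omega)-f(\varepsilon_1)=1-|c_2|<0$, so $\Phi$ misses the first eigenspace. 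This is precisely the stated exception $|m_+-m_-|>2$.

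The crucial case is $\widehat{A}_{r-1}$, $r\ge 2$, where $m_-\ge 1$ (the root system $A_{r-1}$ is nonempty), $c_j=\tfrac{m_-}{2}(r-2j+1)$, $\sum_j c_j=0$, $c_1=\tfrac{m_-(r-1)}{2}$, and dominance means $k_1\ge\dots\ge k_r$ with $k_j\in\Z$. The competitor to $\varepsilon_1$ is the determinant weight $\varpi:=\varepsilon_1+\dots+\varepsilon_r$ coming from the local $\Sphere^1$-factor, with $f(\varpi)=r+\sum_j c_j=r$. By Abel summation with partial sums $C_j=\sum_{l\le j}c_l=\tfrac{m_-}{2}j(r-j)$ (so $C_0=C_r=0$), one gets $\sum_j c_j k_j=\sum_{j=1}^{r-1}C_j(k_j-k_{j+1})\ge c_1(k_1-k_r)$, because $C_j\ge\tfrac{m_-}{2}(r-1)=c_1$ and $k_j-k_{j+1}\ge 0$. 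Hence, if $\omega\ne 0$ is not an integer multiple of $\varpi$, then $k_1>k_r$ and $f(\omega)=\sum_j k_j^2+\sum_j c_j k_j\ge 1+c_1=f(\varepsilon_1)$; and if $\omega=n\varpi$ with $n\ne 0$, then $f(\omega)=rn^2\ge r$. Therefore $\min_{\omega\ne 0}f(\omega)=\min(1+c_1,r)$, which equals $f(\varepsilon_1)=1+c_1$ exactly when $1+c_1\le r$, i.e.\ (as $r\ge 2$) when $m_-\le 2$; while for $m_->2$ one has $f(\varpi)=r<1+\tfrac{m_-(r-1)}{2}=f(\varepsilon_1)$, so $\Phi$ misses the first eigenspace — precisely the stated exception $m_->2$.

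I expect the only real obstacle to be this last case: alongside $\varepsilon_1$ and its conjugate $-\varepsilon_r$, the extra $\Sphere^1$-direction contributes the weight $\varpi$, and one must rule out that some further dominant weight undercuts both $f(\varepsilon_1)$ and $f(\varpi)$ — which is exactly what the Abel-summation bound on $\sum_j c_j k_j$ achieves. Everything else is bookkeeping: that each of the five types actually occurs is already recorded in Theorem \ref{THM: possible root datum types}, and the positivity facts used above ($m_-\ge 1$ in type $\widehat{A}_{r-1}$, $c_j>0$ for $j<r$, $c_r=0$ only for $\widehat{D}_{r}$, $c_1\ge 0$ for $\widehat{D}_2$) are immediate from Proposition \ref{PROP: root systems of rect unit lattice} and Lemma \ref{LEM: 2rho}.
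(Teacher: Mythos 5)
Your proposal is correct and follows essentially the same route as the paper: reduce the statement, via Proposition \ref{PROP: lambda-omega}, to deciding whether $\lambda_{\varepsilon_1}$ is the smallest positive eigenvalue, and detect the exceptions by comparing $\lambda_{\varepsilon_1}$ with $\lambda_{\varepsilon_1+\dots+\varepsilon_r}$ in type $\widehat{A}_{r-1}$ and with $\lambda_{\varepsilon_1\mp\varepsilon_2}$ in type $\widehat{D}_2$. Your Abel-summation bound in type $\widehat{A}_{r-1}$ and the explicit treatment of the remaining types and of the reduction step (via the uniqueness statement) only spell out details the paper's proof handles by the symmetric pairing $c_{r+1-j}=-c_j$ or leaves implicit, so the argument is the same in substance.
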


\begin{remark}
 In the exceptional cases the embedding is not congruent to an embedding in the first eigenspace but rather to an embedding in the $k$-the eigenspace of the Laplacian, where $k$ might be arbitrary large if only $m_-$ or $|m_+-m_-|$ are sufficiently large.
\end{remark}

\begin{proof}
 Since the embedding of Theorem \ref{THM: A} maps $X$ into $V_{\varepsilon_1}$, the question remains
 whether $\lambda_{\varepsilon_1}$ is the first positive eigenvalue of $\Delta.$ If $X$ is not of type $\widehat{A}_{r-1}$ of $D_2$
 the answer is apparently `yes'. In the following we assume $L=2\pi$ for simplicity. If $X$ is of type $\widehat{A}_{r-1}$
 then
 $$\lambda_{\omega}=\sum\limits_{j=1}^rk_j^2+\frac{m_-}{2}\sum\limits_{j=1}^{\left[\frac{r}{2}\right]}(k_j-k_{r+1-j})(r+1-2j).$$
 Therefore $\lambda_{\omega}\geq \lambda_{\varepsilon_1}$ if $k_1-k_r\geq 1.$ Thus $\lambda_{\varepsilon_1}$ is minimal if and only if
 $1+\frac{m_-}{2}(r-1)\leq \lambda_{\sum_{j=1}^r\varepsilon_j}=r$ which is equivalent to $m_-\leq 2.$ \par
 If $X$ is of type $\widehat{D}_2$ we may assume $m_-\leq m_+$ since replacing $\varepsilon_2$ by $-\varepsilon_2$ replaces $m_+$ by $m_-.$
 Then Proposition \ref{PROP: lambda-omega} together with $k_1\geq |k_2|$ implies
 $\lambda_{\omega}\geq k_1^2+k_2^2+k_1m_-$ and thus
 $\lambda_{\omega}\geq 2+m_-=\lambda_{\varepsilon_1-\varepsilon_2}$ if $k_1\geq 2$ or $k_1=|k_2|=1.$ Therefore
 $\lambda_{\varepsilon_1}$ is minimal if and only if $\lambda_{\varepsilon_1}\leq \lambda_{\varepsilon_1-\varepsilon_2}$
 which is equivalent to $m_+-m_-\leq 2.$
\end{proof}

\appendix
\section{The unit lattice of a compact symmetric space}
\label{APPENDIX: unit lattice of compact sym space}

In this appendix we extend well known relations between unit lattices and roots systems for symmetric spaces of compact type (see e.\ g.\ \cite[Prop.\ 2.4, p.\ 68 f. and Thm.\ 3.6, p.\ 77]{Loos2}) to all compact symmetric spaces. \par
Let $X$ be a symmetric space that covers a compact symmetric space, $p\in X,$ and $F_X$ a maximal flat of $X$ containing $p.$
Let $G=G(X)$, $\pi:G/K\to X$, $\lieG=\lieK\oplus\lieP$, $\lieA\subset \lieP$, $\Gamma_X\subset\lieA$, $\Root_X\subset\lieA^*$ and
$\Root_X^\vee\subset\lieA$ as described in Section \ref{SEC: preliminaries}.
Let $Z$ be the center of $G.$ Then
\begin{equation}\label{EQ: exp in Z(G)}
\{H\in\lieA:\; \exp(H)\in Z\}=\{H\in\lieA: \alpha(H)\in\Z \; \text{for all}\; \alpha\in \Root_X\},
\end{equation}
since for $H\in\lieA$, $\exp(H)\in Z$ is equivalent to $\Ad(\exp(H))=e^{\ad(H)}=\mathrm{id}$ and thus to $e^{\ad(H)}|_{\lieG_\alpha}=\mathrm{id}$ for all $\alpha\in\Root_X,$ that is $e^{2\pi i\alpha(H)}=1.$
Let
$$\Gamma_0:=\Gamma_0(\Root_X):=\mathrm{span}_{\Z}\frac{1}{2}\Root_X^\vee$$ and
$$\Gamma_1:=\Gamma_1(\Root_X):=\frac{1}{2}\{H\in\lieA:\; \alpha(H)\in\Z\; \text{for all}\; \alpha\in\Root_X\}.$$
If $X$ is of compact type $\Gamma_0$ and $\Gamma_1$ are full lattices in $\lieA.$ However, if $X$ is compact, but not of compact type, then
$\Gamma_0$ only spans $\lieA_0^\perp\subset\lieA$, $\lieA_0=\bigcap_{\alpha\in\Root_X}\mathrm{ker}(\alpha),$ and
$\Gamma_1$ is not discrete as $\lieA_0\subset\Gamma_1.$ Note that in contrast to $\Gamma_X$, $\Gamma_0$ and $\Gamma_1$ do not change if $X$ is replaced by a symmetric space that covers $X$ or that is covered by $X$ (as $\Root_X$ does not change).

\begin{theorem}
\label{THM: unit lattice sym space}\leavevmode
Let $X$ be a symmetric space that covers a compact symmetric space. Then:
\begin{enumerate}[(i)]
  \item $2R_X\subset\Gamma^*_X=\{\alpha\in\lieA^*:\; \alpha(H)\in\Z\; \text{for all}\; H\in\Gamma_X\}.$
  \item $R^\vee_X\subset 2\Gamma_X.$
  \item $\Gamma_0\subset\Gamma_X\subset\Gamma_1.$
\item The fundamental group $\pi_1(X)$ of $X$ is isomorphic to $\Gamma_X/\Gamma_0.$
 \end{enumerate}
\end{theorem}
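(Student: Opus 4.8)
The plan is to prove the four statements more or less in the order listed, reducing each to the structure of the root space decomposition recalled in Section \ref{SEC: preliminaries} together with the defining property $\Gamma_X=\{H\in\lieA:\exp(H)\in K\}$ of the unit lattice. For (i), I would take $\alpha\in\Root_X$ and $H\in\Gamma_X$; since $\exp(H)\in K$ and $\sigma$ fixes $K$ while acting as $-1$ on $\lieA\subset\lieP$, we have $\exp(H)=\sigma(\exp(H))=\exp(-H)$, so $\exp(2H)=e$, hence $\Ad(\exp(2H))=e^{\ad(2H)}=\mathrm{id}$ and therefore $e^{4\pi i\alpha(H)}=1$ on $\lieG_\alpha$, giving $2\alpha(H)\in\Z$; thus $2\alpha\in\Gamma_X^*$. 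This is the elementary base case and the argument is the symmetric-space analogue of the classical computation for compact groups. Statement (ii) is then essentially dual: $R_X^\vee\subset 2\Gamma_X$ asks that $\tfrac12\alpha^\vee\in\Gamma_X$, i.e.\ $\exp(\tfrac12\alpha^\vee)\in K$; here I would use that $\alpha^\vee$ lies in the subtriple $\R H_\alpha\oplus(\lieG_\alpha+\lieG_{-\alpha})\cap\lieP$ spanning a totally geodesic sphere or projective space through $p$ (as in the proof of Theorem \ref{THM: Polysphere}), and that on this $\lieSU_2$- or $\lieSO_3$-type subalgebra $\exp(\tfrac12\alpha^\vee)$ is the element whose adjoint action is $e^{\pi i\alpha(\alpha^\vee)\,(\cdot)}=e^{2\pi i(\cdot)}$, which fixes $p$; one checks this lands in $K$ by the standard rank-one computation. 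Alternatively (ii) follows from (i) applied to the covering of $X$ by the group-type space $G$ and a duality between $\Gamma_G$ and its dual, but the direct subtriple argument is cleaner.

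For (iii), the inclusion $\Gamma_X\subset\Gamma_1$ is just a reformulation of (i) (unwinding the definition $2\Gamma_1=\{H:\alpha(H)\in\Z\ \forall\alpha\}$), and $\Gamma_0\subset\Gamma_X$ is a reformulation of (ii) together with the fact that $\Gamma_X$ is a group, so that $\mathrm{span}_\Z\tfrac12\Root_X^\vee\subset\Gamma_X$. So (iii) costs nothing once (i) and (ii) are in hand.

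The real content is (iv). Here the plan is: pass to the universal cover $\tilde X$ of $X$, which by the de Rham/compact-type splitting recalled in Section \ref{SEC: preliminaries} is $\R^{a}\times X_1$ with $X_1$ simply connected of compact type, and observe that $G(\tilde X)$ and $G(X)$ share the Lie algebra $\lieG$, with maximal flat $\lieA=\lieA_0\oplus\lieA_1$; the covering $\tilde X\to X$ induces $\Gamma_{\tilde X}\subset\Gamma_X$ and $\pi_1(X)\cong\Gamma_X/\Gamma_{\tilde X}$ because a maximal torus of $X$ is $\lieA/\Gamma_X$ and, as in the Splitting Lemma, every loop is freely homotopic to a geodesic loop lying in a maximal torus, so $\pi_1$ is detected on $F_X$. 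Thus it remains to identify $\Gamma_{\tilde X}=\Gamma_0$. The Euclidean factor contributes nothing to the unit lattice ($\Gamma_{\tilde X}\subset\lieA_0^\perp=\lieA_1$), so one reduces to the simply connected compact type case, where $\pi_1=1$ forces $\Gamma_{\tilde X}$ to be the \emph{smallest} lattice admissible, and one shows it equals $\Gamma_0=\mathrm{span}_\Z\tfrac12\Root_X^\vee$. For this I would argue that $\Gamma_0\subset\Gamma_{\tilde X}$ by (iii), and conversely that $\exp(\Gamma_{\tilde X})\subset K$ together with simple connectedness of $X_1$ (so $K$ is connected, being the fixed group of an involution on the simply connected $G(\tilde X)$, cf.\ \cite{Helg-78}) forces each element of $\Gamma_{\tilde X}$ to be a $\Z$-combination of the $\tfrac12\alpha^\vee$; this is the classical identification of the unit lattice of a simply connected compact symmetric space with the coweight/coroot lattice (cf.\ \cite[Thm.\ 3.6, p.\ 77]{Loos2}), which I would either cite or reprove via the exponential-lattice exact sequence for the torus $\exp(\lieA)\subset G(\tilde X)$ and the structure of $\ker\exp$. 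The main obstacle is exactly this last identification in the simply connected case — keeping track of the factor $\tfrac12$ (coming from the normalization $\ad(H)Z=2\pi i\alpha(H)Z$ and from $\alpha^\vee$ being defined by $\alpha(\alpha^\vee)=2$) and making the reduction from the possibly-non-semisimple, possibly-disconnected-$K$ situation to the clean semisimple simply connected one; everything else is bookkeeping with the root space decomposition.
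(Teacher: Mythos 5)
Parts (i) and (iii) of your plan coincide with the paper's argument, and your reliance on the identification of the unit lattice of the simply connected compact-type factor with $\Gamma_0$ (Loos) is exactly the citation the paper uses. The genuine gap is in (iv), at its crux: you assert $\pi_1(X)\cong\Gamma_X/\Gamma_{\widetilde{X}}$ ``because every loop is freely homotopic to a geodesic loop lying in a maximal torus, so $\pi_1$ is detected on $F_X$.'' Free homotopy only shows that every \emph{conjugacy class} in $\pi_1(X,p)$ meets the image of $\pi_1(T_X,p)\to\pi_1(X,p)$; your kernel computation (lifting $t\mapsto\exp(tH).p$ to $\widetilde{X}$ and getting $\Gamma_{\widetilde{X}}$) is fine, but to upgrade the free-homotopy statement to surjectivity of $\Gamma_X\to\pi_1(X,p)$ you need either that $\pi_1(X)$ is abelian --- which is not available at this point and is in effect part of what is being proved --- or a direct argument that the covering is controlled by the flats. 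The paper supplies precisely this missing step: it shows that the deck transformation group $\Delta$ of $\widetilde{X}\to X$ is centralized by $G(\widetilde{X})$ (being discrete and normalized by a connected group), and then, using that $q$ and $\delta(q)$ lie in a common maximal flat moved back to $F_{\widetilde{X}}$ by an isometry fixing $q$, that $\Delta$ preserves $F_{\widetilde{X}}$; hence $\Delta$ is the deck group of the torus covering $F_{\widetilde{X}}\cong\lieA/\Gamma_0\to T_X\cong\lieA/\Gamma_X$, which is $\Gamma_X/\Gamma_0$. Without this (or an equivalent input, e.g.\ abelianness of fundamental groups of compact symmetric spaces proved elsewhere), your step does not go through as stated.

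For (ii) you take a genuinely different route: the paper simply deduces $\Root_X^\vee\subset 2\Gamma_X$ from $\Gamma_0=\Gamma_{X'}=\Gamma_{\widetilde{X}}\subset\Gamma_X$, i.e.\ from the same Loos citation, whereas you propose a direct rank-one computation. That can be made to work, but as written it has a flaw: $\R H_\alpha\oplus(\lieG_\alpha+\lieG_{-\alpha})\cap\lieP$ is a Lie triple system only when $2\alpha\notin\Root_X$ (the paper itself imposes this restriction in the polysphere proof), so in the $BC$ case you must either enlarge by $\lieG_{\pm 2\alpha}$ or run your argument for the root $2\alpha$, noting $\tfrac12(2\alpha)^\vee=\tfrac14\alpha^\vee$, which gives an even stronger conclusion. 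Since you invoke Loos anyway in (iv), the paper's derivation of (ii) is shorter; your direct argument buys self-containedness for (ii), but only after this repair and the curvature/period bookkeeping you allude to.
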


\begin{proof}
 \begin{enumerate}[(i)]
  \item Let $H\in\Gamma_X.$ From $\exp(H)\in K\subset G^{\sigma}$ we get $\exp(-H)=\sigma(\exp(H))=\exp(H)$ and thus $\exp(2H)=e.$ The statement now follows from Equation \eqref{EQ: exp in Z(G)}.
  \item The unit lattice $\Gamma_{\widetilde{X}}$ of the universal cover $\widetilde{X}$ of $X$ is contained in $\Gamma_X.$ Now $\widetilde{X}$ splits as $\R^n\times X',$ where $X'$ is a simply connected symmetric space of compact type. Thus $\Gamma_{X'}=\Gamma_{\tilde{X}}\subset
  \Gamma_X.$ Since $\Root_{X'}=\Root_{\widetilde{X}}=\Root_X$ and $\Gamma_0(\Root_X')=\Gamma_{X'}$ (see \cite[Corollary, p.\ 77]{Loos2}),
  the claim follows.
  \item This is just a reformulation of (i) and (ii).
\item The fundamental group $\pi_1(X)$ is isomorphic to the group $\Delta$ of deck transformations of the universal covering
$\widetilde{X}\cong \R^n\times X'\to X,$  where $X'$ is a compact simply connected symmetric space. Let $\tilde{p}\in\widetilde{X}$
be an element in the fibre over $p.$
As
$\Gamma_{\tilde{X}}=\Gamma_{X'}$ and $\Gamma_{X'}=\Gamma_0$ by \cite[Corollary, p.\ 77]{Loos2}. Thus the maximal flat $F_{\widetilde{X}}$ of $\widetilde{X}$ corresponding to $\lieA$ with
$\tilde{p}\in F_{\widetilde{X}}$
is isomorphic to $\lieA/\Gamma_0.$ The universal covering restricts to a  covering of the maximal flats
$\lieA/\Gamma_0\cong F_{\widetilde{X}}\to T_X\cong\lieA/\Gamma_X$
which has deck transformation group $\Gamma_X/\Gamma_0.$ Thus it is sufficient to show that
$\Delta$ leaves $F_{\widetilde{X}}$ invariant. As the elements of $G(\widetilde{X})$ push down to isometries of $X$, $G(\widetilde{X})$
normalizes $\Delta$ and thus actually centralizes $\Delta,$ as $\Delta$ is discrete and $G(\widetilde{X})$ is connected.
Now let $\delta\in \Delta$ and $q\in F_{\widetilde{X}}.$
Then there exists a maximal flat $F'_{\widetilde{X}}$ of $\widetilde{X}$ containing  $q$ and $\delta(q)$
and $\tau\in G(\widetilde{X})$ with $\tau(q)=q$ and $\tau(F'_{\widetilde{X}})=F_{\widetilde{X}}.$
Thus $\delta(q)=\delta(\tau(q))=\tau(\delta(q))\in\tau(F'_{\widetilde{X}})=F_{\widetilde{X}}.$
 \end{enumerate}
\end{proof}

A triple $(V,\Gamma,\Root)$ is called a \emph{Euclidean root datum} if $V$ is a Euclidean vector space, $\Gamma$ a full lattice in $V,$
and $\Root$
a root system in a subspace of $V^*,$ such that
$\Gamma_0(\Root)\subset\Gamma\subset\Gamma_1(\Root),$ where
$\Gamma_0(\Root)$ and
$\Gamma_1(\Root)$ are defined above.
Two Euclidean root data $(V,\Gamma,\Root)$ and $(V',\Gamma',\Root')$ are called \emph{isomorphic} if there exists a linear isometry $\varphi:V\to V'$
with $\varphi(\Gamma)=\Gamma'$ and $\varphi(\Root)=\Root',$ where $\varphi(\alpha)=\alpha\circ \varphi^{-1}$
for $\alpha\in V^*.$ For more details see \cite{HQ}. With these definitions Theorem \ref{THM: unit lattice sym space}
implies

\begin{corollary}\label{COR: root datum of X}
 Let $X=G/K$ be a compact symmetric space, $\lieG=\lieK\oplus\lieP$, $\lieA\subset\lieP$, $\Gamma_X\subset\lieA$ and $\Root_X\subset\lieA^*$
 as above. Then $(\lieA,\Gamma_X,\Root_X)$ is a Euclidean root datum. Its isomorphism class only depends on $X$ and is called the \emph{Euclidean root datum of $X$}. The the  fundamental group of $X$ is isomorphic to $\Gamma_X/\Gamma_0$ and encoded in the Euclidean root datum.
\end{corollary}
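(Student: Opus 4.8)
The plan is to assemble the statement from results already in hand; the corollary is essentially a repackaging of Theorem~\ref{THM: unit lattice sym space} together with the invariance remarks of Section~\ref{SEC: preliminaries}. First I would verify the three defining conditions of a Euclidean root datum in turn. The space $\lieA$ is Euclidean with the pull-back inner product from $T_pX$; since $X$ is compact its maximal flat $T_X$ is a torus, so $\Gamma_X$ is a full lattice in $\lieA$, as recalled in Section~\ref{SEC: preliminaries}. By the same discussion $\Root_X$ is a root system in its linear span, hence in a subspace of $\lieA^*$. The remaining inclusions $\Gamma_0(\Root_X)\subset\Gamma_X\subset\Gamma_1(\Root_X)$ are precisely part (iii) of Theorem~\ref{THM: unit lattice sym space}.

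Next I would address independence of the choices by invoking the observation made after the definition of $\Root_X$ in Section~\ref{SEC: preliminaries}: replacing the pair $(p,F_X)$ by another pair $(p',F'_X)$, with associated data $\lieA',\Gamma'_X,\Root'_X$, produces a linear isometry $\varphi\colon\lieA\to\lieA'$ with $\varphi(\Gamma_X)=\Gamma'_X$ and $\varphi(\Root_X)=\Root'_X$. Such a $\varphi$ is exactly an isomorphism of Euclidean root data in the sense defined above, so the isomorphism class of $(\lieA,\Gamma_X,\Root_X)$ depends only on $X$.

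For the fundamental group, part (iv) of Theorem~\ref{THM: unit lattice sym space} already gives $\pi_1(X)\cong\Gamma_X/\Gamma_0$. It then remains to note that $\Gamma_0=\Gamma_0(\Root_X)$ is constructed from $\Root_X$ alone, so that the quotient $\Gamma_X/\Gamma_0(\Root_X)$ is a function of the root datum, and that a root-datum isomorphism $\varphi$ carries $\Gamma_0(\Root)$ to $\Gamma_0(\Root')$ and $\Gamma$ to $\Gamma'$, hence induces an isomorphism of these quotients. Thus $\pi_1(X)$ is encoded in the Euclidean root datum.

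The hard work lies in Theorem~\ref{THM: unit lattice sym space}, not here; the only point in the corollary requiring a little care is making explicit that a root-datum isomorphism respects the auxiliary lattices $\Gamma_0$ and $\Gamma_1$. This is immediate from their definitions, since a linear isometry $\varphi$ with $\varphi(\Root)=\Root'$ satisfies $\varphi(\alpha^\vee)=(\varphi\alpha)^\vee$ for every $\alpha\in\Root$, so that $\varphi$ maps inverse roots to inverse roots, and preserves the integrality conditions $\alpha(H)\in\Z$ that define $\Gamma_1$.
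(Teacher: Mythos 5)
Your proposal is correct and follows essentially the same route as the paper, which treats the corollary as an immediate consequence of Theorem~\ref{THM: unit lattice sym space} (parts (iii) and (iv)) together with the remark in Section~\ref{SEC: preliminaries} that a change of $(p,F_X)$ yields a linear isometry carrying $(\lieA,\Gamma_X,\Root_X)$ to the new triple. Your extra observation that a root-datum isomorphism preserves $\Gamma_0$ and $\Gamma_1$, so that $\Gamma_X/\Gamma_0$ is indeed an invariant of the datum, is exactly the small verification the paper leaves implicit.
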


\section{The Theorem of Cartan--Helgason--Takeuchi}
\label{SEC: Cartan-Helgason-Thm}

Let $X$ be a compact symmetric space and $p$ a base point in $X.$ In this section we denote by $G$ the transvection group of $X,$
that is $G=G(X).$ Let $K$ be the stabilizer of $p$ in $G$ and let $\lieG=\lieK\oplus\lieP$ be the corresponding decomposition
of the Lie algebra $\lieG$ of $G.$
Retaining  the notation of Section \ref{SEC: preliminaries}
we choose a maximal abelian subspace $\lieA$ in $\lieP$ and denote by $T_X=\exp(\lieA).p$ the corresponding maximal torus of
$X$ and by $\Root_X\subset\lieA^*$ the corresponding set of roots of $X.$
We choose a \emph{Weyl chamber} $\lieA_+$ of $X$ in $\lieA,$
that is a connected component of $\lieA\setminus\big(\bigcup_{\alpha\in \Root}\mathrm{ker}(\alpha)\big).$
Note that $W_X$ acts transitively on the set of all Weyl chambers in $\lieA.$
Such a choice of $\lieA_+$ gives rise to a partial ordering on $\lieA^*=\mathrm{Hom}(\lieA,\R)$ by setting
\begin{equation}\label{EQ: partial ordering}
\omega_1<\omega_2:\iff (\omega_1(H)< \omega_2(H)\;
\text{for all}\: H\in\lieA_+),
 \end{equation}
for $\omega_1,\omega_2\in\lieA^*.$ .\par
We extend $\lieA$ to a maximal abelian subalgebra $\lieT$
of $\lieG.$ Then $\lieT=\lieA\oplus (\lieT\cap \lieK).$
One defines the set of roots of $\lieG$ with respect to $\lieT,$
the Weyl chambers of $\lieG$ in $\lieT$ and the Weyl group of $\lieG$ with respect to $\lieT$ similar to the
corresponding notions for $X$ (see e.g.\ \cite{Helg-78, Loos2}).
We now choose a  Weyl chamber $\lieT_+$ of $\lieG$ in $\lieT$
with the property that our previously fixed
$\lieA_+$ lies in the closure of $\lieT_+.$
As in Equation \eqref{EQ: partial ordering}, $\lieT^+$ induces a partial ordering on $\lieT^*=\Hom(\lieT,\R).$
Using the bi-invariant metric on $\lieG$ we identify
$\lieA$ with $\lieA^*$ and $\lieT$  with $\lieT^*.$
Note that this identification of $\lieA^*$ with $\lieA$ is $W_X$-equivariant and that $\lieA^*$ becomes  the subset
$\{\mu\in\lieT^*: \mu(\lieT\cap\lieK)=\{0\}\}$ of $\lieT^*,$
and the partial orderings on $\lieA^*$ induced by
$\lieA_+$ and by $\lieT_+$  coincide. We denote by $\lieA^*_+$ the image of $\lieA_+$ in $\lieT^*$  under this identification
and by $\mathrm{cl}(\lieA^*_+)$ its closure. We set
$\Gamma_X^*=\{\omega\in \lieA^*: \omega(\Gamma_X)\subset\Z\}$ and
$(\Gamma_X^*)_+=\Gamma_X^*\cap \mathrm{cl}(\lieA^*_+).$
If $\Gamma_G=\{H\in\lieT: \exp(H)=e\}$ and $\Gamma_G^*=\{\omega\in \lieT^*: \omega(\Gamma_G)\subset\Z\}$
and $(\Gamma_G^*)_+=\Gamma_G^*\cap \mathrm{cl}(\lieT^*_+)$ then (cf.\ \cite[Lem.\ 1, p.\ 143]{TakeBook})
$$(\Gamma_X^*)_+\subset (\Gamma_G^*)_+.$$
\par
Let $V$ be a complex $G$-module.
We endow $V$ with a $G$-invariant hermitian inner product $(\cdot ,\cdot)_{\C}.$ For an element $\omega\in \lieT^*$
we set $$V_{\omega}:=\left\{v\in V: \exp(H).v=e^{2\pi i\omega(H)}v\; \text{for all}\; H\in\lieT\right\}.$$
The elements of $\Omega_V:=\{\omega\in\lieT^*: V_{\omega}\neq \{0\}\}$
are called \emph{weights} of $V$ and the decomposition
\begin{equation}
\label{EQ: Weight space decomposition}
 V=\bigoplus\limits_{\omega\in\Omega_V}V_{\omega}
\end{equation}
is called the \emph{weight space decomposition} of $V.$
It is orthogonal with respect to $(\cdot ,\cdot)_{\C}.$
We have $\Omega_V\subset \Gamma_G^*.$ Note that the roots of a compact Lie algebra $\lieG$ are the weights of the adjoint representation of the group of inner automorphisms of $\lieG.$
\par
A \emph{$K$-spherical $G$-module} is an irreducible complex $G$-module $V$ such that $V^K:=\{v\in V: k.v=v\}$ has positive dimension.
By a result of \textsc{Cartan} $V^K$ has dimension one \cite[I.5 and V.17]{Cartan} (see also \cite[Thm.\ 5.5, p.\ 92]{TakeBook}).
We denote by  $\widehat{G}_K$ set of all equivalence classes of $K$-spherical $G$-modules.

\begin{theorem}[Cartan--Helgason--Takeuchi, {\cite[Thm.\ 8.2]{TakeBook}}]
\label{THM: Cartan Helgason}
 The map
 $$(\Gamma_X^*)_+\to \widehat{G}_K,\quad \omega\mapsto [V(\omega)]$$
 is a bijection. Here $V(\omega)$ denotes an irreducible complex $G$-module with highest weight $\omega$
 and $[V(\omega)]$ its equivalence class.
\end{theorem}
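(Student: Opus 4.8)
The plan is to verify the three properties of the map $\omega\mapsto[V(\omega)]$ separately: that it is well defined, injective, and surjective. Injectivity is essentially free: since $(\Gamma_X^*)_+\subset(\Gamma_G^*)_+$, the highest weight theorem for the compact group $G$ attaches to each $\omega\in(\Gamma_X^*)_+$ an irreducible complex $G$-module $V(\omega)$ with highest weight $\omega$, unique up to equivalence and non-equivalent for different $\omega$. So the content of the theorem lies in the other two directions, and of these the non-trivial one is well-definedness, i.e.\ that $V(\omega)$ is $K$-spherical for every $\omega\in(\Gamma_X^*)_+$.

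For surjectivity, let $V$ be $K$-spherical and fix $v^o\in V^K\setminus\{0\}$. Decompose $v^o=\sum_{\mu\in M}v^o_\mu$ along the $\lieT$-weight space decomposition of $V$, which is stable under $\exp(\lieT)$ and orthogonal for a $G$-invariant hermitian product. Invariance of $v^o$ under the torus $\exp(\lieT\cap\lieK)\subset K$ forces $\mu|_{\lieT\cap\lieK}=0$ for every $\mu\in M$, so $\mu\in\lieA^*$; invariance under $\exp(\Gamma_X)\subset K$ (note $\Gamma_X\subset\lieA\subset\lieT$) forces $e^{2\pi i\mu(H)}=1$ for all $H\in\Gamma_X$, so $\mu\in\Gamma_X^*$. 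By \cite[Lem.\ 3, p.\ 92]{TakeBook} the orthogonal projection of the highest weight space onto $V^K$ is non-zero; together with the self-adjointness of the averaging projection $P=\int_K k\,dk$ and $Pv^o=v^o$ this yields $v^o_\lambda\neq0$ for the highest weight $\lambda$ of $V$, hence $\lambda\in M\subset\Gamma_X^*$. Being dominant, $\lambda$ lies in $\mathrm{cl}(\lieA_+^*)$ because the partial orderings on $\lieA^*$ induced by $\lieA_+$ and by $\lieT_+$ coincide. Thus $\lambda\in(\Gamma_X^*)_+$ and $[V]=[V(\lambda)]$ lies in the image.

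For well-definedness I would first show that the set $\Lambda$ of highest weights of $K$-spherical $G$-modules is a sub-monoid of $(\Gamma_X^*)_+$. If $V(\lambda)$ and $V(\mu)$ are $K$-spherical with $K$-fixed vectors $a$ and $b$, then $a\otimes b\in(V(\lambda)\otimes V(\mu))^K$, and its orthogonal projection onto the Cartan component $V(\lambda+\mu)$ --- the unique irreducible constituent of $V(\lambda)\otimes V(\mu)$ containing the weight $\lambda+\mu$, with multiplicity one --- is non-zero: the $(\lambda+\mu)$-weight space of $V(\lambda)\otimes V(\mu)$ is one-dimensional, contained in $V(\lambda+\mu)$, and on it $a\otimes b$ has component $a_\lambda\otimes b_\mu$, where $a_\lambda\neq0$ and $b_\mu\neq0$ by the same non-vanishing and self-adjointness argument as above. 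That projection is a $K$-fixed vector, so $\lambda+\mu\in\Lambda$; clearly $0\in\Lambda$. Since $(\Gamma_X^*)_+=\Gamma_X^*\cap\mathrm{cl}(\lieA_+^*)$ is a finitely generated monoid (Gordan's lemma) and we already know $\Lambda\subset(\Gamma_X^*)_+$, it then suffices to realize each of its finitely many minimal generators --- the fundamental spherical weights --- as the highest weight of some $K$-spherical module.

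This last reduction is where the real work sits, and I expect it to be the main obstacle, since it is the genuinely non-formal part of the statement. I would follow \cite{TakeBook}: use Theorem \ref{THM: unit lattice sym space} to translate membership in $(\Gamma_X^*)_+$ into explicit integrality conditions on $\langle\omega,\alpha^\vee\rangle$ for $\alpha\in\Root_X$ (Helgason's criterion), and then produce, for each fundamental spherical weight, a concrete non-zero $K$-fixed vector, e.g.\ inside exterior or symmetric powers of the isotropy module $\lieP\otimes\C$ or of $\lieG\otimes\C$, reducing where convenient to the restricted root system or to rank one. An alternative, more analytic, route uses Peter--Weyl: $V(\omega)$ is $K$-spherical if and only if it occurs in $C^\infty(G/K)$, and for each dominant $\omega\in\Gamma_X^*$ one constructs a highest-weight function on $G/K$ --- the character $e^{2\pi i\omega(\cdot)}$ is well defined on the maximal torus $T_X\cong\lieA/\Gamma_X$ exactly because $\omega\in\Gamma_X^*$, and it extends to a function on $G/K$ whose $G$-translates span a copy of $V(\omega)$. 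In either approach everything outside the construction of the fundamental spherical representations is formal.
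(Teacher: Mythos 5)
First, a remark on the comparison itself: the paper does not prove this statement at all — it is quoted from Takeuchi \cite[Thm.\ 8.2]{TakeBook} with only a historical note — so your attempt has to stand on its own rather than be matched against an in-paper argument. The formal parts of your proposal are correct: injectivity is indeed immediate from the highest weight theorem for $G$ once well-definedness is known; your surjectivity argument (decompose $v^o\in V^K$ into $\lieT$-weight vectors, use invariance under $\exp(\lieT\cap\lieK)$ and $\exp(\Gamma_X)$ to get $M\subset\Gamma_X^*$, and use \cite[Lem.\ 3, p.\ 92]{TakeBook} plus self-adjointness of averaging over $K$ to get $v^o_\lambda\neq 0$, hence $\lambda\in(\Gamma_X^*)_+$) is sound and is in fact the same computation the paper performs inside the proofs of Theorem \ref{THM: A} and Theorem \ref{THM: uniqueness}; and the semigroup property of spherical highest weights via the Cartan component of $V(\lambda)\otimes V(\mu)$ is a correct standard argument.

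The genuine gap is exactly where you place it, and it is not closed: well-definedness, i.e.\ that every $\omega\in(\Gamma_X^*)_+$ — after your reduction, every generator of this monoid — really is the highest weight of a $K$-spherical module. This is the substantive half of the Cartan--Helgason--Takeuchi theorem, and neither of your two routes supplies it. The first (``produce a concrete non-zero $K$-fixed vector inside exterior or symmetric powers of $\lieP\otimes\C$, reducing to rank one'') is a program rather than an argument: no fixed vector is exhibited, and the rank-one reduction is precisely where Helgason's proof needs the noncompact dual and the Iwasawa decomposition, none of which you set up. The second route begs the question: asserting that the character $e^{2\pi i\omega(\cdot)}$ on $T_X\cong\lieA/\Gamma_X$ ``extends to a function on $G/K$ whose $G$-translates span a copy of $V(\omega)$'' is essentially a restatement of the sphericality of $V(\omega)$; constructing that extension (for instance via the Iwasawa-type function on the dual space, transferring back to the compact picture, and Takeuchi's extension beyond the semisimple case, which is needed here because $X$ may have local $\Sphere^1$-factors) is the content of the theorem. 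A minor additional caveat: when $\lieA_0\neq\{0\}$ the cone $\mathrm{cl}(\lieA^*_+)$ is not pointed, so Gordan's lemma gives finite generation of $(\Gamma_X^*)_+$ but no canonical ``minimal generators''; this is harmless, but the phrase ``fundamental spherical weights'' does not quite apply. As it stands, you have proved the easy inclusion and reduced the hard one to a finite list of weights that is then left untreated, i.e.\ the citation of \cite{TakeBook} cannot yet be replaced by your argument.
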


\begin{note}
Theorem \ref{THM: Cartan Helgason} has a long history (c.\ f.\ \textsc{Borel} \cite[Chap.\ IV, B, \S 4]{Borel-Hist}).
After previous work by \textsc{Cartan} \cite{Cartan}, \textsc{Helgason} \cite{Helg-70} (see also \cite[Ch.\ V, \S 4, 1.]{Helg-84}) gave a proof for semi-simple symmetric spaces.
The extension to all compact symmetric spaces is due to  \textsc{Takeuchi} \cite{TakeBook}.
\end{note}


\section{Submanifolds of Clifford Type and their splittings}

Let $r>0$ be an integer and $\rho_1,\dots,\rho_r$ be positive real numbers. A space
$$C':=\{z=(z_1,\dots, z_r)\in\C^r:\; |z_j|=\rho_j\; \text{for all}\; j=1,\dots, r\}$$
is called a \emph{standard Clifford torus}.
Endowed with the inner product
$\langle z,w\rangle=\mathrm{Re}\Big(\sum\limits_{j=1}^r z_j\overline{w_j}\Big)$ $\C^r$ is a Euclidean space.\par
A submanifold $C$ of a Euclidean space $E$ is called a \emph{Clifford torus} if it is the image of a standard
Clifford torus $C'\subset\C^r$ under an affine isometric map $\varphi:\C^r\to E.$
Let $p=\varphi(w)\in C.$ Then the unit lattice
$\Gamma=\{v\in T_pC:\; \Exp_p(v)=p\},$ where $\Exp_p$ denotes the Riemannian exponential map of $C$ at $p,$
 has an orthogonal basis $b_1,\dots, b_r,$ which is unique up to sign and order (see \cite[Lemma 1.1]{EHQ}). Indeed, $\Gamma$ is the direct sum of the unit lattices of the
circles $C_k(p)=\varphi(\{z\in\C^r:\; |z_k|=\rho_k\; \text{and}\; z_j=w_j\; \text{for all}\; k\neq j\}),\; k=1,\dots, r,$
which lie in the pairwise orthogonal affine planes $E_k(p)=\varphi\{z\in\C^r:\; z_j=w_j\; \text{for all}\; k\neq j\}.$

\begin{example}
\label{EXAMPLE: CT}
Let $V$ be a Euclidean space. We consider
$V^c = V\otimes\C$ as a \emph{real} Euclidean space by taking the real part of the hermitian extension of the inner product on $V.$
Let $v,w\in V^c$ be orthogonal with respect to the hermitian inner product and $v\neq 0.$
Then $C = \{zv+\bar zw: z\in\C\; \text{with}\; |z|=1\}$ is a planar circle in $V^c.$
In fact, for $z = c+is$ with $c = \cos t$, $s =\sin t$ we have
$$zv+\bar zw = (c+is)v+(c-is)w = c(v+w) + s\;i(v-w)$$
which is a circle since $v+w$ and $i(v-w)$ have the same length and
are perpendicular with respect to the (real) Euclidean inner product on $V^c$.\par
More generally, if $v_1,\dots,v_r,w_1,\dots,w_r\in V^c$
are pairwise orthogonal with respect to the hermitian inner product and $v_1,\dots v_r$ are all nonzero,
then $$C:=\left\{\sum\limits_{j=1}^r( z_jv_j+\bar z_jw_j):\;  z_j\in\C\; \text{with}\; |z_j|=1\right\}$$
is a Clifford torus in $V^c.$
\end{example}

As in \cite{EHQ} we call an embedded submanifold $X\subset E$ of a Euclidean space $E$ \emph{of Clifford
type}, if every geodesic of $X$ lies in a totally geodesic submanifold $C$ of $X$ which is
a Clifford torus in $E.$

\begin{theorem}[{\cite[Corollary 2.8]{EHQ}}]
\label{THM: Clifford type Extr Sym}
 Let $X\subset E$ be a connected submanifold of a Euclidean space $E.$ Then $X$ is a compact extrinsically symmetric space
 if and only if $X$ is of Clifford type. In this case, $X$ is intrinsically a compact symmetric space with rectangular unit lattice and all maximal tori of $X$ are  Clifford tori in $E.$
\end{theorem}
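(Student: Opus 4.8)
The statement combines a biconditional with three structural conclusions; I would prove it by establishing the two implications separately and reading off the structural conclusions from the Clifford-type side, the step ``of Clifford type $\Rightarrow$ extrinsically symmetric'' being the elementary one. Assume first that $X$ is of Clifford type. Then every maximal geodesic lies in a compact Clifford torus, so $X$ is complete and all its geodesics have relatively compact image. Fix $p\in X$ and let $\sigma_p\in\mathrm{O}(E)$ be the orthogonal reflection of $E$ in the affine normal space $p+\nu_pX$, i.e.\ the involution fixing $p+\nu_pX$ pointwise with $d\sigma_p|_{T_pX}=-\mathrm{id}$. I claim $\sigma_p(X)=X$. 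Given $q\in X$, join $p$ to $q$ by a geodesic $\gamma$ of $X$ and choose a totally geodesic Clifford torus $C\subseteq X$ containing $\gamma$, so $p,q\in C$. Since $C$ is totally geodesic in $X$, its second fundamental form in $E$ is the restriction of that of $X$ and so takes values in $\nu_pX$ at $p$; and the affine hull of a Clifford torus $C$ is $p+(T_pC\oplus W_p)$, where $W_p$ is spanned by the radial directions of its circle factors at $p$, which are exactly the values at $p$ of the second fundamental form of $C$. Therefore $C\subseteq p+(T_pC\oplus\nu_pX)$, and on this affine subspace $\sigma_p$ restricts to the reflection of the Clifford torus $C$ conjugating each circle factor across its radius through $p$; in particular $\sigma_p(C)=C$, so $\sigma_p(q)\in C\subseteq X$. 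As $q$ was arbitrary, $\sigma_p(X)=X$, so $X$ is extrinsically symmetric, and $\sigma_p|_X$, fixing $p$ with differential $-\mathrm{id}$, is the geodesic symmetry at $p$; thus $X$ is intrinsically a Riemannian symmetric space. It is then compact, because a complete non-compact Riemannian manifold admits a geodesic ray, whose image is not relatively compact. Finally, a maximal torus $T_X$ of $X$ carries a dense geodesic $\gamma$, which lies in a compact totally geodesic Clifford torus $C$; then $T_X=\overline{\gamma}\subseteq C$, and $C$, being flat and totally geodesic, has dimension at most that of a maximal flat, namely $\dim T_X$, so $T_X=C$. Hence every maximal torus of $X$ is a Clifford torus, and since the unit lattice of a Clifford torus is rectangular, $X$ has rectangular unit lattice.

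For the converse, let $X\subseteq E$ be compact extrinsically symmetric. Its second fundamental form is parallel, and by Ferus's theorem (\cite{Feru-80}; see the introduction) $X$ is congruent to an $s$-orbit $X=G\cdot v^o\subseteq E$ on which the transvection group $G$ acts by orthogonal transformations, with $v^o$ fixed by $K=G_{v^o}$. Every geodesic of $X$ lies in some maximal torus of $X$, and all maximal tori are $G$-conjugate inside $\mathrm{O}(E)$, so it is enough to show that the one maximal torus $T_X=\exp(\lieA)\cdot v^o$ is a Clifford torus in $E$. For $H\in\lieA$ the element $\exp(H)$ acts on $E$ as $e^{D_H}$ for a skew-symmetric endomorphism $D_H$ of $E$; writing $v^o=\sum_\mu v^o_\mu$ along the $\lieA$-weights of $E\otimes\C$ — which lie in $\lieA^*$, and indeed in $\Gamma_X^*$, because $\exp(\lieT\cap\lieK)\subseteq K$ and $\exp(\Gamma_X)\subseteq K$ fix $v^o$ — one gets $T_X=\{H\mapsto\sum_\mu e^{2\pi i\mu(H)}v^o_\mu:H\in\lieA/\Gamma_X\}$. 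This torus is a Clifford torus in $E$ precisely when there is an orthogonal $\Z$-basis $e_1,\dots,e_r$ of $\Gamma_X$, with dual basis $\varepsilon_1,\dots,\varepsilon_r$, such that every weight $\mu$ with $v^o_\mu\ne0$ is supported on a single $e_j$ and has $\mu(e_j)\in\{0,\pm1\}$: then $T_X=\prod_j\{e^{2\pi i\varepsilon_j(H)}v^o_{\varepsilon_j}+e^{-2\pi i\varepsilon_j(H)}v^o_{-\varepsilon_j}\}$ is literally a product of circles in pairwise orthogonal planes, as in Example~\ref{EXAMPLE: CT}.

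The main obstacle is precisely to establish this weight condition for $v^o$. A purely intrinsic argument from parallelism of the second fundamental form does not suffice: geodesics of an extrinsically symmetric space need not all be circles of a common curvature — think of a product of round spheres of differing radii — and weights $\mu$ and $2\mu$ both occurring in $v^o$ would turn a circle factor of $T_X$ into a $(1,2)$-curve rather than a planar circle. The input one must exploit is the special position of $v^o$ that comes with the symmetric $R$-space structure, equivalently the fact that $X$ corresponds to a $\Z$-grading of a semisimple Lie algebra with abelian $(\pm1)$-parts, so that the skew operators $D_H$ have eigenvalues $2\pi i\,\mu(H)$ with ``low'' weights $\mu$; feeding this back confines the weights of $v^o$ to the $W_X$-orbit of a single $\varepsilon_j$, after which $T_X$ — and hence every geodesic of $X$ — lies in a Clifford torus. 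Assembling the two implications with the structural conclusions obtained along the way yields the theorem.
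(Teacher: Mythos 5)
Your first half (Clifford type $\Rightarrow$ compact extrinsically symmetric, plus the structural conclusions) is essentially correct and complete: reducing the $\sigma_p$-invariance of $X$ to the invariance of a single Clifford torus $C$ through $p$ via the inclusion $C\subset p+(T_pC\oplus\nu_pX)$ (using that the second fundamental form of the totally geodesic $C$ is the restriction of that of $X$), and identifying maximal tori with Clifford tori by a dense geodesic together with the bound $\dim C\le\mathrm{rank}(X)$, are sound arguments. Note that the paper itself contains no proof of this statement to compare with --- it is imported verbatim from \cite{EHQ}; your argument for this implication is a legitimate stand-alone proof of that half.

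The converse, however, has a genuine gap, and it sits exactly where you locate it. After invoking \textsc{Ferus} \cite{Feru-80} to realize $X$ as a standardly embedded $s$-orbit $G.v^o$ and correctly reducing the problem to a weight condition --- that there exist an \emph{orthogonal} $\Z$-basis $e_1,\dots,e_r$ of $\Gamma_X$ with dual basis $\varepsilon_1,\dots,\varepsilon_r$ such that every weight $\mu$ of $E\otimes\C$ with $v^o_\mu\neq 0$ lies in $\{0,\pm\varepsilon_1,\dots,\pm\varepsilon_r\}$ --- you do not prove that condition: your last paragraph only names the required input (the $\Z$-grading with abelian $(\pm 1)$-parts, ``low'' weights) and asserts that ``feeding this back'' confines the weights, with no argument. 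But this is precisely the nontrivial half of the cited result. A priori the weights of $v^o$ on $\lieA$ are only known to lie in $\Gamma_X^*$, and nothing you have written excludes, say, $2\varepsilon_1$ or $\varepsilon_1+\varepsilon_2$ from occurring in $v^o$, which would destroy the planar-circle structure of the torus (your own remark about $(1,2)$-curves makes the point). Moreover, the existence of an orthogonal basis of $\Gamma_X$ --- i.e.\ rectangularity of the unit lattice --- is in this direction a \emph{conclusion}, not a hypothesis, so it must itself be extracted from the $R$-space structure rather than presupposed when writing the weights in the basis $\varepsilon_1,\dots,\varepsilon_r$. To close the gap one has to work explicitly with the grading of the ambient semisimple algebra determined by $\ad(\xi)$, show that the curves $t\mapsto\mathrm{e}^{t\,\ad(H)}\xi$ spanning a maximal torus of the orbit have frequency spectrum $\{0,\pm 1\}$ only (in suitable coordinates), and verify the orthogonality of the resulting planes and of the lattice basis; none of this is carried out. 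As it stands, your proposal proves the easy implication and reduces the hard one to exactly the statement that \cite{EHQ} (or the work of \textsc{Ferus} and \textsc{Loos}) is needed for, so the theorem is only half proved.
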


\subsection*{Intrinsic and extrinsic splittings}

Let $X$ be a connected Riemannian manifold. Recall that $X$ \emph{splits intrinsically} as $X_1\times \dots\times X_k$
if $X_1,\dots, X_k$ are Riemannian manifolds and there exists an isometry $f:X_1\times \dots \times X_k\to X.$ Such an isometry $f$
is called an (explicit) splitting and gives rise to $k$ pairwise orthogonal foliations whose leaves through
$p=f(x_1^0,\dots, x_k^0)$ are $X_j(p):=f(x_1^0,\dots, x_{j-1}^0, X_j,x_{j+1}^0, \dots, x_k^0)$ for $1\leq j\leq k.$\par
Let $Y$ be another connected Riemannian manifold and $\iota:X\to Y$ an isometric embedding. We say that the submanifold $\iota(X)\subset Y$  \emph{splits extrinsically} as
$X_1\times \dots\times X_k$ if there exists a commutative diagram
$$\begin{CD}
Y_1\times\dots\times Y_k @>F>> Y\\
@AA\iota_1\times \dots\times \iota_kA @AA\iota A\\
X_1\times \dots \times X_k @>f>> X
\end{CD}$$
where the first and the last rows are (explicit) intrinsic splittings and the maps $\iota_j:X_j\to Y_j$, $j=1,\dots, k$, are isometric embeddings. In this case the foliations corresponding to intrinsic splittings of $X$ and $Y$ satisfy
$\iota(X_j(p))\subset Y_j(\iota(p))$ for all $p\in X$ and for all $1\leq j\leq k.$

\begin{lemma}
 \label{LEM: intrinsic splitting extrinsic}
 Let $\iota:X\to Y$ be an isometric embedding and let $f:X_1\times\dots\times X_k\to X$ and $F:Y_1\times \dots\times Y_k\to Y$ be explicit intrinsic splitting that satisfy $\iota(X_j(p))\subset Y_j(\iota(p))$ for all $p\in X$ and for all $1\leq j\leq k.$
 Then $\iota(X)$ splits extrinsically as $X_1\times \dots \times X_k.$ More precisely, there exist isometric embeddings $\iota_j:X_j\to Y_j$, $j=1,\dots, k,$ such that $F\circ (\iota_1\times \dots\times \iota_k)=\iota\circ f.$
\end{lemma}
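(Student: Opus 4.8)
The statement is essentially a bookkeeping exercise: we are given intrinsic splittings $f$ and $F$ and the compatibility condition $\iota(X_j(p))\subset Y_j(\iota(p))$, and we must produce the embeddings $\iota_j$ and verify that the square commutes. The natural approach is to \emph{define} $\iota_j$ by fixing base points, restricting $\iota$ to a single leaf, and reading off the target leaf in $Y$.

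First I would fix a base point. Choose $(x_1^0,\dots,x_k^0)\in X_1\times\dots\times X_k$, set $p_0=f(x_1^0,\dots,x_k^0)$, and let $q_0=\iota(p_0)$; write $q_0=F(y_1^0,\dots,y_k^0)$. For each $j$, the leaf $X_j(p_0)\subset X$ is by definition the image under $f$ of $\{x_1^0\}\times\dots\times X_j\times\dots\times\{x_k^0\}$, which is isometric to $X_j$ via the restriction of $f$; call this isometry $\phi_j:X_j\to X_j(p_0)$. Similarly $F$ restricts to an isometry $\psi_j:Y_j\to Y_j(q_0)$. The hypothesis $\iota(X_j(p_0))\subset Y_j(q_0)$ shows that $\iota\circ\phi_j$ lands in $Y_j(q_0)$, so I can define
$$\iota_j:=\psi_j^{-1}\circ\iota\circ\phi_j:X_j\to Y_j,$$
which is an isometric embedding since it is a composite of isometries and an isometric embedding, and it sends $x_j^0$ to $y_j^0$.

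Next I would prove that $F\circ(\iota_1\times\dots\times\iota_k)=\iota\circ f$. Both sides are maps $X_1\times\dots\times X_k\to Y$. I would show they agree by induction on the number of coordinates that differ from the base point $x_j^0$: the key geometric input is that moving along one factor of the product corresponds to staying inside a single leaf of the $j$-th foliation, both upstairs in $X$ (resp.\ $Y$) and, by the inclusion hypothesis, compatibly under $\iota$. Concretely, for a point $(x_1,\dots,x_k)$, connect it to the base point through the intermediate points obtained by changing one coordinate at a time; at each step one uses that $f$ restricted to the corresponding leaf through the current point is a translate (in the product sense) of $\phi_j$, that the same holds for $F$ and $\psi_j$, and that $\iota$ carries the $X$-leaf into the corresponding $Y$-leaf by hypothesis, so that $\psi_j^{-1}\circ\iota$ on that leaf is exactly $\iota_j$ acting on the $j$-th coordinate. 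Iterating over all $k$ coordinates gives the commutativity. The final sentence of the statement — that $\iota(X_j(p))\subset Y_j(\iota(p))$ for all $p$ — is then immediate from the commutative diagram, since changing the $j$-th coordinate keeps us in a leaf on both sides.

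\textbf{Main obstacle.} The only subtlety is that the leaves $X_j(p)$ depend on $p$, so one must check that the definition of $\iota_j$ via the base point $p_0$ is consistent with the leaf structure at every other point — i.e.\ that $\iota_j$ really does describe the restriction of $\iota$ to each leaf $X_j(p)$, not just the one through $p_0$. This is handled precisely by the product structure: leaves through different points of the $j$-th foliation are identified with $X_j$ via $f$ in a way that differs only in the other coordinates, and the same is true for $Y$, so the hypothesis on $\iota$ forces the induced map on each leaf to be the \emph{same} map $\iota_j$. Making this "only the other coordinates change" statement precise, and threading it through the induction, is the one place where care is needed; everything else is formal.
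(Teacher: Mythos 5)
Your argument is correct and essentially the paper's: after reducing to $X=X_1\times\dots\times X_k$, $Y=Y_1\times\dots\times Y_k$ with $f=\mathrm{id}$ and $F=\mathrm{id}$, everything rests on the observation that the hypothesis makes each component $\mathrm{pr}_j\circ\iota$ constant along the leaves of every foliation $X_\ell$ with $\ell\neq j$, hence a function of $x_j$ alone. The paper states this independence directly and concludes $\iota=\iota_1\times\dots\times\iota_k$, while you package the same fact as a base-point definition of $\iota_j$ plus a coordinate-by-coordinate induction — which is fine, provided the inductive step explicitly invokes the hypothesis in the directions $\ell\neq j$ (not just the $j$-th) to see that the induced map on the leaf through the current point is still the base-point map $\iota_j$, exactly the point you flag as the main obstacle.
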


\begin{proof}
 For simplicity we may assume $X=X_1\times\dots\times X_k$, $Y_1=Y_1\times \dots\times Y_k$, $f=\mathrm{id}_X$ and $F=\mathrm{id}_Y.$
 Let $\iota_j'=\mathrm{pr}_j\circ \iota,$ where $\mathrm{pr}_j:Y\to Y_j$ is the $j$-th projection.  Since for instance
 $X_1(p)\subset Y_1(\iota(p)$ for all $p=(x_1,\dots, x_k)\in X$, we get
 $$\iota(X_1,x_2,\dots,x_k)\subset (Y_1,\iota'_2(x_1,\dots, x_k),\dots, \iota'_k(x_1,\dots, x_k).$$
 Since the left hand side is independent of $x_1$ and not contained in any other leaf $Y_1(q),$ the maps
 $\iota_2',\dots ,\iota_k'$ are independent of $x_1.$ Similarly, $\iota'_j$ is independent of $x_{\ell}$ for all
 $\ell\neq j.$ Thus every $\iota_j'$ induces a mapping $\iota_j:X_j\to Y_j$ and $\iota=\iota_1\times \dots\times \iota_k.$
 \end{proof}

 We immediately get

 \begin{corollary}\label{COR: extr splitting}
  Let $X\subset E$ be an embedded submanifold of a Euclidean space $E$ and $f:X_1\times X_k\to X$ an intrinsic splitting of $X.$
  If
  \begin{enumerate}[(i)]
   \item $\aff(X_j(p))$ is perpendicular to $\aff(X_\ell(p))$ and
   \item $\aff(X_j(p))$ is parallel to $\aff(X_j(q)$
  \end{enumerate}
  for all $p,q\in X$ and $1\leq j<\ell<k$ then $X$ splits extrinsically as $X_1\times \dots \times X_k$ and vice-versa. Here $\aff(M)$ denotes the affine hull of a subset $M\subset E.$
 \end{corollary}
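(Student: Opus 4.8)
The plan is to derive both implications from Lemma~\ref{LEM: intrinsic splitting extrinsic}, applied with the ambient space $Y=E$ and $\iota\colon X\hookrightarrow E$ the inclusion. Under this reduction the forward implication amounts to exhibiting an explicit intrinsic splitting $F\colon Y_1\times\dots\times Y_k\to E$ of the Euclidean space $E$ such that $X_j(p)\subseteq Y_j(p)$ for all $p\in X$ and all $j$, while the converse amounts to reading off (i) and (ii) from the affine geometry that any such splitting of $E$ carries.

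For the forward direction, fix a base point $p_0\in X$, and for each $j$ let $V_j$ be the linear subspace parallel to the affine hull $\aff(X_j(p_0))$. Hypothesis (i) says the $V_1,\dots,V_k$ are pairwise orthogonal; I set $V_{k+1}=(V_1\oplus\dots\oplus V_k)^{\perp}$, so that the translation space of $E$ is the orthogonal direct sum $V_1\oplus\dots\oplus V_{k+1}$ and vector addition exhibits $E$ as the Riemannian product of the affine subspaces $Z_j:=p_0+V_j$. To match the number of factors of $X$ I absorb the last summand into the last factor, obtaining the explicit intrinsic splitting $F\colon Z_1\times\dots\times Z_{k-1}\times(Z_k\times Z_{k+1})\to E$ whose $j$-th leaf through a point $q$ is $q+V_j$ for $j<k$ and $q+(V_k\oplus V_{k+1})$ for $j=k$. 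The key point is that $X_j(p)\subseteq p+V_j$ for every $p\in X$ and every $j\le k$: by (ii) the affine hull $\aff(X_j(p))$ is parallel to $\aff(X_j(p_0))$, hence has direction space $V_j$, and since $p\in X_j(p)$ it must equal $p+V_j$; therefore $X_j(p)\subseteq\aff(X_j(p))=p+V_j$. Thus $X_j(p)$ lies in the $j$-th leaf of $F$ through $p$ for every $j\le k$ (directly for $j<k$, and because $V_k\subseteq V_k\oplus V_{k+1}$ for $j=k$), so Lemma~\ref{LEM: intrinsic splitting extrinsic} applies and yields isometric embeddings $\iota_j\colon X_j\to Y_j$ with $F\circ(\iota_1\times\dots\times\iota_k)=\iota\circ f$, i.e.\ $X$ splits extrinsically as $X_1\times\dots\times X_k$.

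For the converse, suppose $X$ splits extrinsically, witnessed by an intrinsic splitting $F\colon Y_1\times\dots\times Y_k\to E$ and isometric embeddings $\iota_j\colon X_j\to Y_j$ with $X_j(p)\subseteq Y_j(p)$ for all $p$ and $j$. Since $F$ is an affine isometry and the product $Y_1\times\dots\times Y_k$ is orthogonal, each leaf $Y_j(q)$ is an affine subspace whose direction space $W_j$ is independent of $q$, and $W_j\perp W_\ell$ for $j\ne\ell$. From $X_j(p)\subseteq Y_j(p)=p+W_j$ one gets that $\aff(X_j(p))$ has direction space contained in $W_j$, so $\aff(X_j(p))\perp\aff(X_\ell(p))$, which is (i). Moreover, because the same embedding $\iota_j$ occurs in every slice, $\aff(X_j(p))$ is the $F$-image of $\{y_1^0\}\times\dots\times\aff(\iota_j(X_j))\times\dots\times\{y_k^0\}$ for the point $(y_1^0,\dots,y_k^0)=F^{-1}(p)$; since $\aff(\iota_j(X_j))\subseteq Y_j$ does not depend on $p$, the affine hulls $\aff(X_j(p))$ for different $p$ are translates of one another, hence parallel, which is (ii).

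The only genuine subtlety is the mismatch between the $k+1$ orthogonal summands of $E$ that arise naturally in the forward construction and the $k$ factors demanded by the definition of an extrinsic splitting; I resolve it by merging $V_{k+1}$ into the last factor, which is harmless because enlarging $V_k$ to $V_k\oplus V_{k+1}$ only enlarges the $k$-th leaves and therefore preserves the containment $X_k(p)\subseteq Y_k(p)$. Everything else is routine bookkeeping with affine hulls together with the already-established Lemma~\ref{LEM: intrinsic splitting extrinsic}.
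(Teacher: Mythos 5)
Your argument is correct and follows exactly the route the paper intends: the paper derives this corollary as an immediate consequence of Lemma \ref{LEM: intrinsic splitting extrinsic} (it gives no further details), and your proof simply fills in the routine steps -- building the orthogonal affine splitting of $E$ from the common direction spaces $V_j$ (with the complement absorbed into the last factor) for the forward direction, and using that an intrinsic splitting of $E$ is by parallel, pairwise orthogonal affine subspaces (isometries of Euclidean spaces being affine) for the converse.
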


 For Clifford tori we get:

\begin{lemma}\label{LEM: Clifford tori splitting}
If a Clifford torus $C\subset E$ splits intrinsically as $X_1\times \cdots \times X_k$, then this splitting is also extrinsic
and $X_j(p)$ are Clifford tori for all $j=1,\dots k$ and all $p\in C.$
\end{lemma}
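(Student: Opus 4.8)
The plan is to reduce everything to Corollary~\ref{COR: extr splitting}. Write the given Clifford torus as $C=\varphi(C')$, where $\varphi\colon\C^r\to E$ is an affine isometric map and $C'=\{z\in\C^r:\ |z_j|=\rho_j,\ 1\le j\le r\}$ is a standard Clifford torus, fix a base point $p=\varphi(w)\in C$, and let $b_1,\dots,b_r$ be the orthogonal basis of the unit lattice $\Gamma\subset T_pC$, with $b_i$ tangent to the generating circle $C_i(p)\subset E_i(p)$; recall that the affine planes $E_1(p),\dots,E_r(p)$ are pairwise orthogonal and that $E_i(q)$ is parallel to $E_i(p)$ for every $q\in C$ and every $i$. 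Suppose $f\colon X_1\times\cdots\times X_k\to C$ is an intrinsic splitting, with leaves $X_j(p)$.

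First I would pass to lattice language. Each factor $X_j$, being a Riemannian factor of a flat torus, is itself a flat torus, and the product metric forces the universal Riemannian cover of $C$---which we identify with $T_pC$ via $\Exp_p$---to split orthogonally as $T_pC=V_1\oplus\cdots\oplus V_k$ with $\Gamma=\bigoplus_{j=1}^{k}(\Gamma\cap V_j)$ and $X_j\cong V_j/(\Gamma\cap V_j)$. The crucial step is to show that each $V_j$ is spanned by a sub-collection $\{b_i:\ i\in I_j\}$ of the orthogonal basis, for some partition $\{1,\dots,r\}=I_1\sqcup\cdots\sqcup I_k$. I would prove this by induction on $r$, the case $r=1$ being trivial: a shortest nonzero vector of $\Gamma$ is, up to sign, one of the $b_i$ of minimal length; since such a vector $b$ decomposes orthogonally along the $V_j$ and $\|b\|^{2}$ equals the sum of the squared lengths of its components, $b$ must lie entirely in a single $V_j$, say $b_1\in V_1$ after relabelling. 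Passing to the hyperplane $W=b_1^{\perp}$ one has $\Gamma\cap W=\Z b_2+\dots+\Z b_r$ and the compatible orthogonal decomposition $\Gamma\cap W=\big(\Gamma\cap(V_1\cap W)\big)\oplus\bigoplus_{j\ge 2}(\Gamma\cap V_j)$, to which the induction hypothesis applies, giving back $V_1=\R b_1\oplus(V_1\cap W)$ as a coordinate subspace. (Alternatively, this is the uniqueness of the decomposition of a positive definite lattice into indecomposable orthogonal summands, a rectangular lattice being an orthogonal direct sum of rank-one lattices.)

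Granting $V_j=\mathrm{span}\{b_i:\ i\in I_j\}$, the leaf $X_j(p)=\Exp_p(V_j)$ is, in the explicit picture, the image under $\varphi$ of $\{z\in\C^r:\ |z_i|=\rho_i\ (i\in I_j),\ z_i=w_i\ (i\notin I_j)\}$, that is, the image under the affine isometric map $\varphi$ restricted to the affine subspace $w+\mathrm{span}_{\C}\{\mathbf e_i:\ i\in I_j\}$ (with $\mathbf e_1,\dots,\mathbf e_r$ the standard basis of $\C^r$) of a standard Clifford torus; hence $X_j(p)$ is a Clifford torus, for every $j$ and---the radii $\rho_i$ being independent of the base point---for every $p\in C$. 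Moreover $\aff\big(X_j(p)\big)=\varphi\big(w+\mathrm{span}_{\C}\{\mathbf e_i:\ i\in I_j\}\big)=\aff\big(\bigcup_{i\in I_j}E_i(p)\big)$. It then remains to check the hypotheses of Corollary~\ref{COR: extr splitting}: for $j\ne\ell$ the subspaces $\aff(X_j(p))$ and $\aff(X_\ell(p))$ both contain $p$ and have direction spaces $\varphi_*\big(\mathrm{span}_{\C}\{\mathbf e_i:\ i\in I_j\}\big)$ and $\varphi_*\big(\mathrm{span}_{\C}\{\mathbf e_i:\ i\in I_\ell\}\big)$, which are orthogonal because $I_j\cap I_\ell=\emptyset$; and for any $p,q\in C$ the subspaces $\aff(X_j(p))$ and $\aff(X_j(q))$ share the direction space $\varphi_*\big(\mathrm{span}_{\C}\{\mathbf e_i:\ i\in I_j\}\big)$, hence are parallel. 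Corollary~\ref{COR: extr splitting} then yields that the intrinsic splitting of $C$ is extrinsic, which together with the Clifford-torus identification above finishes the proof. The one genuinely non-formal point is the lattice step---that an orthogonal direct-sum decomposition of a rectangular lattice must respect its orthogonal basis---and that is where I would expect to do the real work.
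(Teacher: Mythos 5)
Your proposal is correct and follows essentially the same route as the paper's proof: the key step in both is the lattice argument that a shortest nonzero vector of the rectangular unit lattice must lie entirely in one orthogonal summand (norms being additive over the orthogonal decomposition $\Gamma=\bigoplus_j(\Gamma\cap V_j)$), followed by induction to see that the decomposition respects the orthogonal basis, and then the extrinsic splitting via Corollary \ref{COR: extr splitting} using orthogonality and parallelism of the affine hulls of the leaves. You merely treat general $k$ directly and write out the coordinate picture more explicitly, whereas the paper reduces to $k=2$ and argues more tersely.
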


\begin{proof} It suffices to assume $k=2$ and $C=X_1\times X_2.$ Let $p\in C.$
 Then the unit lattice $\Gamma\subset T_pC$ of $C$ at $p$ has an orthogonal
 basis $b_1,\dots, b_r$ with $|b_1|\leq |b_2|\leq \dots\leq |b_r|.$
 Since  $X_1(p))$ and $X_2(p)$ are compact totally geodesic submanifolds of
 $C,$ they are both tori and we get an orthogonal decomposition of the unit lattice
 $\Gamma$ of $C$ at $p$ into $\Gamma^1\oplus \Gamma^2,$ where $\Gamma^1$ is the unit lattice of $X_1(p))$
 and $\Gamma^2$ the unit lattice of $X_2(p).$ Let $j\in\{1,\dots, r\}.$ From
 $b_j=b_j^1+b_j^2$ with $b_j^1\in\Gamma^1$ and $b_j^2\in\Gamma^2$ we get
 $|b_j|^2=|b_j^1|^2+|b_j^2|^2.$ Thus $b_1\in\Gamma^1$ or $b_1\in\Gamma^2,$ since $b_1$ is a nonzero element
 of shortest length in $\Gamma$ (see also part (ii) in the proof of Proposition \ref{PROP: Weyl orbit}). By induction we get $b_j\in\Gamma^1\cup\Gamma^2$ for all $j\in\{1,\dots, r\}.$
 Hence $\Gamma^1$ and $\Gamma^2$ are rectangular lattices and the $X_1(p))$ resp.\ $X_2(p)$ are Clifford tori
 in orthogonal subspaces $E_1(p)$ resp.\  $E_2(p).$ Since $E_j(p)$ is parallel to $E_j(q)$ for all $j=1,2$ and $p,q\in C,$ $C$ splits extrinsically by Corollary \ref{COR: extr splitting}.
\end{proof}

Lemma \ref{LEM: Clifford tori splitting} generalizes to submanifolds of Clifford type:

\begin{proposition}\label{PROP: CT intrinsic implies extrinsic splitting}
 Let $X\subset E$ be a submanifold of Clifford type. If $X$ splits intrinsically as a Riemannian product
 $X_1\times \dots\times X_k,$ then this splitting is extrinsic.
\end{proposition}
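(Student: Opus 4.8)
The plan is to verify the two conditions in Corollary~\ref{COR: extr splitting}: that at every point $p\in X$ the affine hulls $\aff(X_j(p))$ of the leaves of the intrinsic splitting are pairwise perpendicular, and that $\aff(X_j(p))$ is parallel to $\aff(X_j(q))$ for all $p,q\in X$. First I would reduce to $k=2$ by induction on $k$. Assuming the case $k=2$, if $X=X_1\times X'$ with $X'=X_2\times\dots\times X_k$ splits extrinsically, so that $E=E_1\oplus E'$ with $X_1\subseteq E_1$ and $X'\subseteq E'$, then any geodesic of $X'$, regarded as a geodesic of $X$, lies in a maximal flat $\mathcal{F}$ of $X$; since $X$ is compact, $\mathcal{F}$ is a maximal torus and hence a Clifford torus in $E$ by Theorem~\ref{THM: Clifford type Extr Sym}, and writing $\mathcal{F}=\mathcal{F}_1\times\mathcal{F}'$ as a product of maximal tori of $X_1$ and $X'$, Lemma~\ref{LEM: Clifford tori splitting} shows this splitting is extrinsic; as the given geodesic has velocity tangent to $X'$, it lies in the corresponding leaf $\mathcal{F}'\subseteq E'$, a totally geodesic Clifford torus of $X'$. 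Thus $X'\subseteq E'$ is again of Clifford type, and the induction hypothesis applies.

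So let $X=X_1\times X_2$ be of Clifford type in $E$. By Theorem~\ref{THM: Clifford type Extr Sym}, $X$ is extrinsically symmetric, hence its second fundamental form $\alpha$ is parallel, and all maximal tori of $X$ are Clifford tori. The key step is to show
\[\alpha(u,w)=0\qquad\text{for all }u\in T_pX_1,\ w\in T_pX_2,\ p\in X.\]
For this, choose maximal tori $\mathcal{F}_1\ni p_1$ of $X_1$ and $\mathcal{F}_2\ni p_2$ of $X_2$ with $u$ tangent to $\mathcal{F}_1$ and $w$ tangent to $\mathcal{F}_2$; then $\mathcal{F}:=\mathcal{F}_1\times\mathcal{F}_2$ is a maximal torus of $X$, hence a Clifford torus, whose intrinsic splitting is extrinsic by Lemma~\ref{LEM: Clifford tori splitting}. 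By the explicit form of the splitting (Lemma~\ref{LEM: intrinsic splitting extrinsic}) the restriction $\Phi|_{\mathcal{F}}$ decouples,
\[\Phi(x_1,x_2)=\Phi(x_1,p_2)+\Phi(p_1,x_2)-\Phi(p)\qquad\text{for }(x_1,x_2)\in\mathcal{F}_1\times\mathcal{F}_2.\]
Applying this along the geodesic $\gamma(t)=(c_1(t),c_2(t))$ of $X$, where $c_1$, $c_2$ are the geodesics of $X_1$, $X_2$ with $c_1'(0)=u$, $c_2'(0)=w$ (so that $\gamma$ runs in $\mathcal{F}$ with velocity $u+w$ at $p$), and differentiating twice at $t=0$ with the Gauss relation $(\Phi\circ\gamma)''(0)=\alpha_p(\gamma'(0),\gamma'(0))$ for geodesics of $X$ (the two leaves being totally geodesic in $X$), I obtain $\alpha_p(u+w,u+w)=\alpha_p(u,u)+\alpha_p(w,w)$, whence $\alpha_p(u,w)=0$.

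Granting $\alpha(TX_1,TX_2)=0$, the rest is standard submanifold geometry. The Gauss equation together with $R^E=0$ and $R^X(u,w)=0$ for $u\in TX_1$, $w\in TX_2$ yields $\langle\alpha(u,u'),\alpha(w,w')\rangle=0$, so the first normal spaces $N^1_j(p):=\spann\,\alpha_p(T_pX_j,T_pX_j)$ are mutually perpendicular; moreover each leaf $X_j(p)$, being totally geodesic in $X$ with $\alpha(TX_1,TX_2)=0$, is a parallel submanifold of $E$, so that $\aff(X_j(p))=\Phi(p)+V_j(p)$ with $V_j(p):=T_pX_j\oplus N^1_j(p)$ a constant subspace along $X_j(p)$. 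The perpendicularity condition of Corollary~\ref{COR: extr splitting} then follows from $T_pX_1\perp T_pX_2$, from $N_pX\perp T_pX$ (which gives $N^1_j(p)\perp T_pX_{3-j}$), and from $N^1_1(p)\perp N^1_2(p)$. For the parallelism condition, since $X_1(q)$ depends only on the $X_2$-coordinate of $q$, I join $p=(p_1,p_2)$ to $q=(p_1,q_2)$ by a geodesic $c(t)=(p_1,\gamma_2(t))$ with $\gamma_2$ a geodesic of $X_2$, extend a basis $u_1^0,\dots$ of $T_{p_1}X_1$ to the vector fields $u_i(t)=(u_i^0,0)$ along $c$ (so $\nabla^X_{c'}u_i=0$), and note that because $c'$ is tangent to $X_2$ and $u_i$ to $X_1$, the Gauss and Weingarten formulas together with $\nabla\alpha=0$, $\alpha(TX_1,TX_2)=0$ and $N^1_1\perp N^1_2$ give $\nabla^E_{c'}\bigl(d\Phi\,u_i\bigr)=0$ and $\nabla^E_{c'}\bigl(\alpha(u_i,u_j)\bigr)=0$; hence these vectors, and therefore the spaces $T_{c(t)}X_1$, $N^1_1(c(t))$ and $V_1(c(t))$, are independent of $t$. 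Thus $\aff(X_1(p))$ and $\aff(X_1(q))$ are parallel, and symmetrically for $X_2$; Corollary~\ref{COR: extr splitting} then yields the extrinsic splitting.

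The main obstacle is the vanishing $\alpha(TX_1,TX_2)=0$; this is exactly where the hypothesis that $X$ is of Clifford type is used in an essential way, through Lemma~\ref{LEM: Clifford tori splitting}: the flat totally geodesic Clifford tori of $X$ are what allow a ``diagonal'' geodesic to be written as a sum of geodesics of the two leaves. Once this is established, the construction of the constant osculating spaces $V_j(p)$ and the verification of the two conditions of Corollary~\ref{COR: extr splitting} are routine.
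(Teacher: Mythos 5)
Your proof is correct, but it takes a genuinely different route from the paper's. The paper stays entirely ``synthetic'': for $p=(p_1,p_2)$ it observes that $\aff(X_j(p))$ is the affine hull of the union of the maximal tori of the leaf through $p$, applies Lemma~\ref{LEM: Clifford tori splitting} (together with Corollary~\ref{COR: extr splitting}) to each product torus $T_1\times T_2$ to see that the hulls of $T_1\times\{p_2\}$ and $\{p_1\}\times T_2$ are perpendicular and that hulls of $T_1\times\{p_2\}$, $T_1\times\{q_2\}$ are parallel, and then concludes directly from Corollary~\ref{COR: extr splitting} --- no second fundamental form appears at all. You instead extract from the extrinsically split product tori the decoupling identity $\Phi(x_1,x_2)=\Phi(x_1,p_2)+\Phi(p_1,x_2)-\Phi(p)$, differentiate it along a diagonal geodesic to get the vanishing of the mixed second fundamental form, and then run standard submanifold geometry (Gauss equation, Weingarten, $\nabla\alpha=0$) to verify the two conditions of Corollary~\ref{COR: extr splitting}; note that both arguments ultimately rest on Lemma~\ref{LEM: Clifford tori splitting} applied to a product of maximal tori. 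Your version is heavier: it imports the classical fact that extrinsically symmetric submanifolds have parallel second fundamental form (Ferus/Str\"ubing), which Theorem~\ref{THM: Clifford type Extr Sym} does not state and the paper never needs, and it quietly uses that each leaf is full in its first osculating affine subspace, i.e.\ $\aff(X_j(p))=\Phi(p)+V_j(p)$ (needed so that constancy of $V_j$ really gives parallelism of the hulls; the osculating-space argument is routine but should be said). In return you get more refined information --- $\alpha(TX_1,TX_2)=0$, orthogonality of the first normal spaces, and the explicit identification of the affine hulls --- and your induction step, showing that after splitting off $X_1$ the factor $X'$ is again of Clifford type in $E'$, spells out a reduction the paper passes over with ``it suffices to assume $k=2$''.
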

\begin{proof}
Since $X$ is a compact symmetric space, the same holds for all factors $X_j.$
 It suffices to assume $k=2$ and $X=X_1\times X_2.$ Then for a point $p=(p_1,p_2)\in X$ we have
 $X_1(p)=X_1\times \{p_2\}$ and $X_2(p)=\{p_1\}\times X_2.$
We set
 $E_1(p)=\aff(\{p_1\}X_1(p))$ and
  $E_2(p)=\aff(X_2(p)).$
  Now for any two points in an $X_j$ there exists a maximal torus of $X_j$ containing these two points. Thus we have
  $$E_1(p)=\aff\bigcup_{T_1\ni p_1} (T_1\times \{p_2\}),$$ where the union is taken over all maximal tori of $X_1$ that contain $p_1$ and
  $$E_2(p)=\aff\bigcup_{T_2\ni p_2} (\{p_1\}\times T_2),$$
  where the union is taken over all maximal tori of $X_2$ that contain $p_2.$\par
  If $T_j,\; j=1,2,$ are maximal tori of $X_j$ with $p_j,p'_j\in T_j$ then
  $T=T_1\times T_2$ is a maximal torus of $X$ and hence a Clifford torus in $E.$
  By Lemma \ref{LEM: Clifford tori splitting} and Corollary \ref{COR: extr splitting} the affine hulls
  of $T_1\times \{p_2\}$ and $\{p_1\}\times T_2$ are perpendicular while those of
  $T_1\times \{p_2\}$ and $T_1\times \{q_2\}$  (resp.\ $\{p_1\}\times T_2$ and  $\{q_1\}\times T_2$)
  are parallel. Thus
  $E_1(p)$ is perpendicular to $E_2(p)$ for all $p$ and $E_1(p)$ is parallel to $E_1(q)$ (resp.\
  $E_2(p)$ is parallel to $E_2(q)$) for all $p,q\in X.$ The assertion now follows from Corollary \ref{COR: extr splitting}.
\end{proof}



\end{document}